\newcommand{\po}{\left(}
\newcommand{\pf}{\right)}
\newcommand{\cco}{\llbracket}
\newcommand{\ccf}{\rrbracket}
\newcommand{\E}{\mathbb E}
\newcommand{\R}{\mathbb R}
\newcommand{\D}{\mathcal D}
\newcommand{\X}{\mathbf X}
\newcommand{\x}{\mathbf x}
\newcommand{\N}{\mathbb N} 
\newcommand{\M}{\mathcal M}
\newcommand{\dd}{\mathrm{d}}
\newcommand{\argmin}{\operatornamewithlimits{argmin}}
\newtheorem{thm}{Theorem}
\newtheorem{assu}{Assumption}
\newtheorem{lem}[thm]{Lemma}
\newtheorem{prop}[thm]{Proposition}
\newtheorem{rem}{Remark}
\title{Condensation in Fleming--Viot particle systems with fast selection mechanism}
\author[1]{Lucas Journel\thanks{E-mail: lucas.journel@sorbonne-universite.fr}}
\author[2]{Tony Lelièvre\thanks{E-mail: tony.lelievre@enpc.fr}} 
\author[3]{Julien Reygner\thanks{E-mail: julien.reygner@enpc.fr}}
\affil[1]{LJLL, Sorbonne université, Paris, France}
\affil[2]{CERMICS, \'Ecole des Ponts and Inria, Marne-La-Vall\'ee, France.}
\affil[3]{CERMICS, École des Ponts, Marne-La-Vall\'ee, France}
\date{ }
\begin{document}

\maketitle

\begin{abstract}
  We study the Fleming--Viot particle system in a discrete state space, in the regime of a fast selection mechanism, namely with killing rates which grow to infinity. This asymptotics creates a time scale separation which results in the formation of a condensate of all particles, which then evolves according to a continuous-time Markov chain with jump rates depending in a nontrivial way on both the underlying mutation dynamics and the relative speed of growth to infinity of the killing rate between neighbouring sites. We prove the convergence of the particle system and completely describe the dynamics of the condensate in the case where the number of particles is kept fixed, and partially in the case when the number of particles goes to infinity together with (but slower than) the minimal killing rate.
\end{abstract}

\section{Introduction}\label{sec:motivation}

\subsection{Fleming--Viot particle systems}

\emph{Fleming--Viot particle systems}, sometimes also referred to as \emph{Moran processes}, are population dynamics models in which $n$ individuals are subjected to two mechanisms:
\begin{itemize}
  \item individuals evolve in some state space as independent copies of a given continuous-time stochastic process (called the \emph{mutation} process);
  \item they are killed at random times with a rate which depends on their current position, and are then instantaneously duplicated at the position of a uniformly chosen individual among the $n-1$ remaining ones (this is the \emph{selection}, or \emph{sampling}, mechanism). 
\end{itemize}
When the mutation process is a random walk on ${\mathbb Z}^d$ and the killing rate $\lambda>0$ is uniform on this space, the empirical measure of this particle system is known to converge, in the $n \to \infty$ limit and under a diffusive space-time rescaling for the mutation process, to the \emph{Fleming--Viot superprocess}~\cite{FleVio79}, which is a measure-valued process on $\R^d$ introduced to describe the distribution of alleles in a population with a large number of possible genetic states~\cite{Eth00}. In particular, in this regime, the selection mechanism is sped up by a factor $n$ with respect to the mutation process.

The asymptotic behaviour of the system is also of interest without space-time rescaling. Indeed, denoting by $(X_t)_{t \geqslant 0}$ the mutation process and $\tau$ the killing time, it is known that the empirical measure of the system generally converges, when $n \to \infty$, to the deterministic quantity $\mathbb{P}(X_t \in \cdot | \tau > t)$, which is the law of $X_t$ conditioned on survival up to time $t$. This observation, which also holds true for \emph{hard} killing mechanism, that is to say instantaneous killing when the process reaches some subset of the state space, has motivated numerous developments on the study of propagation of chaos and hydrodynamics limit~\cite{BurHolMar00,GriKan04,Lob09,AssFerGro11,Vil14,OcaVil17,FVsoft} as well as fluctuations~\cite{CerDelGuyRou20,LelPilRey18}, for both soft and hard killing mechanisms, in both discrete and continuous spaces.

From a numerical point of view, Fleming--Viot particle systems can be seen as Interacting Particle Systems~\cite{DelMic03,Del04,Rou06} allowing to approximate the conditional distribution  $\mathbb{P}(X_t \in \cdot | \tau > t)$, whose evolution is given by a nonlinear Fokker--Planck equation and is therefore nontrivial to study. The $t \to \infty$ limit of this conditional distribution is called a \emph{quasistationary distribution} (QSD) for the process $(X_t)_{t \geqslant 0}$~\cite{ColMarSan13}, and plays a prominent role in population dynamics~\cite{MelVil12}, molecular dynamics~\cite{LeBLelLusPer12,DiGLelLePNec17} or Monte Carlo methods~\cite{PolFeaJohRob20}. For recent works on the existence and uniqueness of QSDs, as well as convergence of the conditional distribution $\mathbb{P}(X_t \in \cdot | \tau > t)$ towards the QSD, we refer to~\cite{champagnat2018general,CV2020,bansaye-Cloez-Gabriel-Marguet}.

While the numerical interest of Fleming--Viot particle systems lies in the consistency of their $n \to \infty$ limits, in practice they can only be implemented with finitely many particles. In this context, one may expect their efficiency to depend on the intensity of the killing rate. More precisely, to approximate the QSD of the mutation process by the stationary distribution of the Fleming--Viot particle process, one needs the latter to reach stationarity on a shorter time scale than the typical killing time. This time scale separation is referred to as the \emph{metastability} of the system~\cite{OliVar05}; in this situation, quantitative estimates for Fleming--Viot particle systems associated with diffusions with hard killing were recently obtained in~\cite{FVhard}. The primary motivation of the present article is to study the behaviour of Fleming--Viot particle systems in \emph{non-metastable} systems, namely for which killing occurs on a faster time scale than convergence to stationarity for the mutation process. We therefore consider the Fleming--Viot particle system in the asymptotic regime where the mutation dynamics remains fixed, but the killing rates grow to infinity uniformly, with a minimal rate $\underline\lambda$. 


\subsection{The fast selection asymptotics}

In the mathematical biology literature, and more precisely in the study of evolutionary models, this time scale separation between slow mutation and fast selection is known to induce the convergence of microscopic Individual Based Models toward models of \emph{monomorphic} populations, namely populations in which all individuals share the same phenotype, or trait~\cite{Cha06,ChaLam07}. The dynamics of the dominant trait, on the slow time scale, is then driven by mutations, which after an infinitesimally short competition, invade the population and replace the previous dominant trait. From the mathematical point of view, this is expressed by the convergence of the empirical distribution of the microscopic system to a Dirac mass. The limiting dynamics of this Dirac mass, which therefore describes the consecutive dominant traits of the population, is then the main object of interest.

We prove  that, in the fast selection asymptotics, the same phenomenon occurs for the Fleming--Viot particle system: its empirical measure converges to a Dirac mass, which evolves randomly on the slow time scale. We establish this \emph{condensation} phenomenon, and show that the dynamics of the Dirac mass (which we shall call \emph{condensate}) is determined in a nontrivial way by both the jump rates of the microscopic mutation dynamics, and by the relative speed of growth to infinity of the killing rates between neighbouring sites. We shall first address the case of a fast selection and constant number of particles, and then the case of both fast selection and large number of particles. In the context of convergence of microscopic Individual Based Models, these two asymptotic regimes were addressed in~\cite{ChaLam07} and~\cite{Cha06}, respectively. The convergence of a Fleming--Viot system toward a Dirac mass evolving according to a deterministic ordinary differential equation was also recently derived in~\cite{Champagnat-Hass}, however in the different asymptotic of a \emph{small} (but not slow) mutation. More generally, the condensation phenomenon is also known to occur in various interacting particle systems in statistical mechanics, such as the Zero Range Process~\cite{GroSchSpo03}, or the reversible Inclusion Process~\cite{BiaDomGia17}, in the regime of a large number of particles.

\subsection{Outline of the article}

The article is organised as follows. In Section~\ref{sec:main_results}, the precise mathematical setting is introduced, and the results are presented. The proofs of Theorems~\ref{thm:soft-kill-1} and~\ref{thm:conv-trajectories} which give the asymptotic behaviour in the high killing rate regime for a fixed number of particles are respectively presented in Sections~\ref{sec:general-rate} and~\ref{sec:conv-trajectories}. Finally, the proof of Theorem~\ref{thm:soft-kill-2} which describes the asymptotic dynamics in the limit where both the killing rate and the number of particles go to infinity is provided in Section~\ref{sec:rate-bounded-var}.

\section{Setting and main results}\label{sec:main_results}

After presenting the mathematical setting in Section~\ref{sec:MS}, we will present our results first for a fixed number of particles in Section~\ref{ss:res-fixed} and then for infinitely many particles in Section~\ref{ss:res-infty}.

\subsection{Mathematical setting}\label{sec:MS}

We consider the Fleming--Viot particle process associated with a mutation process which is a continuous-time Markov chain in some finite or countably infinite state space $D$, with jump rates $(q(x,y))_{x,y \in D}$, and a selection mechanism induced by killing rates $(\lambda(x))_{x \in D}$. We suppose that
\begin{equation}\label{eq:assQlambda}
    Q := \sup_{x\in D}\sum_{y\in D} q(x,y) < \infty, \qquad \sup_{x\in D}\lambda(x) < \infty.
\end{equation}
We denote by $\X = (\X_t)_{t \geqslant 0}$ the Fleming--Viot $n$-particle process in $D^n$. We are interested in the evolution of the empirical measure $\pi(\X_t)$ of the system, where for any vector $\x=(x_1, \ldots, x_n) \in D^n$,
\begin{equation}\label{eq:pi}
\pi(\x)=\frac{1}{n}\sum_{i=1}^n\delta_{x_i}.
\end{equation}
Since the particles of the Fleming-Viot process are exchangeable, the Markov dynamics can be expressed in terms of the empirical measure $\pi(\X_t)$.
We write $\M^1(D)$ for the set of probability measures on $D$ and $\M^1_n(D) \subset \M^1(D)$ for the set of empirical measures of $n$ particles in $D$:
\[
\mathcal M^1_n(D) = \{\pi(\x), \x \in D^n\} = \left\{ \po\frac{k_x}{n}\pf_{x\in D}\middle|\forall x \in D, k_x\in \N \text{ and } \sum_{x\in D} k_x = n \right\}.
\] 
In this space, the empirical measure $(\pi(\X_t))_{t \ge 0}$ of the Fleming--Viot process is the continuous-time Markov chain with infinitesimal generator 
\[L =L_m + L_s,\]
where $L_m$ and $L_s$ respectively correspond to mutation and selection. More precisely, introducing the notation 
\[
\ell^{\infty}( \mathcal M^1_n(D)) = \left\{ u : \mathcal M^1_n(D)\to \R, \|u\|_{\infty} < \infty \right\},\quad 
\|u\|_{\infty} = \sup_{\xi\in  \mathcal M^1_n(D)}|u(\xi)|,
\]
the operator $L_m$ is the infinitesimal generator of the empirical measure of $n$ independent random walks in $D$ with jump rates $q(x,y)$: for all $f \in \ell^{\infty}( \mathcal M^1_n(D))$, for all $\xi \in \mathcal M^1_n(D)$,
\begin{equation}\label{eq:generateur-diffusion}
L_mf(\xi) = \sum_{x,y\in D} n\xi(x) q(x,y) \left( f\left(\xi +\frac{\delta_y - \delta_x}{n}\right) - f(\xi) \right).
\end{equation}
On the other hand, the operator $L_s$ describes the evolution of the empirical measure of $n$ particles experiencing only the selection mechanism, where a particle located in $x$ is killed at rate $\lambda(x)$ and duplicated at the location of one of the $n-1$ remaining particles, chosen uniformly: for all $f \in \ell^{\infty}( \mathcal M^1_n(D))$, for all $\xi \in \mathcal M^1_n(D)$,
\[
L_sf(\xi) = \sum_{x,y\in D} \frac{n^2}{n-1} \xi(x)\lambda(x)\xi(y) \left( f\left(\xi +\frac{\delta_y - \delta_x}{n}\right) - f(\xi) \right).
\]
 
Under~\eqref{eq:assQlambda}, the Markov generators $L$, $L_m$ and $L_s$ are bounded operators of $\ell^{\infty}( \mathcal M^1_n(D))$. Moreover, the latter space is put in duality with the space
\[
\ell^1( \mathcal M^1_n(D)) = \left\{ u : \mathcal M^1_n(D)\to \R, \|u\|_{1} < \infty \right\},\quad 
\|u\|_{1} = \sum_{\xi\in  \mathcal M^1_n(D)}|u(\xi)|,
\]
through the duality bracket
\[
\forall u \in \ell^1(\mathcal M^1_n(D)), v\in \ell^{\infty}(\mathcal M^1_n(D)), \qquad \langle u,v \rangle = \sum_{\xi \in \mathcal M^1_n(D)}u(\xi)v(\xi).
\]
We denote by $L^*$, $L_m^*$ and $L_s^*$ the respective adjoint operators of $L$, $L_m$ and $L_s$ with respect to this duality bracket. They are bounded operators of $\ell^1(\mathcal M^1_n(D))$.

\medskip 

As explained in Section~\ref{sec:motivation}, we are interested in the 
regime of a fast selection mechanism, namely 
\begin{equation}\label{eq:context}
     \underline \lambda := \inf_{x \in D} \lambda(x) \rightarrow \infty.
\end{equation}
A key remark in our approach is that, under the selection dynamics, particles can only move to a site which already contains other particles, so that Dirac masses $\delta_x \in \mathcal{M}^1_n(D)$, $x \in D$ are absorbing states for this dynamics. In the context of evolutionary models, the absorption time of the selection mechanism is called the \emph{fixation time}. Therefore, if this time occurs faster than the typical time between two consecutive mutations, on the time scale of the mutation dynamics, all particles condensate in a single site, and the motion of the condensate, in the set
\begin{equation}\label{eq:Dirac_set}
\Delta = \left\{ \delta_x, x\in D\right\}
\end{equation}
of Dirac masses on $D$, is triggered by mutations. The goal of this article is precisely to describe the dynamics of this condensate. We first address the case where $n$ is kept fixed while $\underline\lambda$ satisfies~\eqref{eq:context} in Section~\ref{ss:res-fixed}, and then the case where both $n$ and $\underline\lambda$ go to infinity in Section~\ref{ss:res-infty}. In the former case, we importantly use the fact that with probability going to~$1$, the fixation time always (on a finite time horizon) occurs before a new mutation. This allows us to employ functional analytic tools of averaging for slow/fast systems~\cite{Pavliotis-Stuart}, but in the somehow not so common case where the fast dynamics has several absorbing states rather than a unique stationary distribution. In the case where both $n$ and $\underline\lambda$ go to infinity, we directly show that the order of magnitude of absorption under the selection mechanism is $\underline \lambda/n$, and therefore that, on the time scale of the mutation dynamics, the condition
\begin{equation}\label{eq:time-scale-separation}
    \frac{\underline \lambda}{n} \gg 1
\end{equation}
ensures condensation.

\subsection{Fixed number of particles}\label{ss:res-fixed}

In this section, the number of particles $n$ is fixed, and we show that, in the limit~\eqref{eq:context}, the trajectories of the empirical process $(\pi(\X_t))_{t \ge 0}$ converge towards trajectories in the set $\Delta$ defined in~\eqref{eq:Dirac_set}.
This limiting dynamics can be precisely described as a continuous-time Markov chain in $D$. 

In order to make precise the limiting regime~\eqref{eq:context}, let us consider a family $(\lambda_r)_{r\in\N}$ of functions $D \to \R_+$, and introduce the notation
\begin{equation}\label{eq:inf-lambda}
    \underline \lambda_r = \inf_{x \in D} \lambda_r(x).
\end{equation}
We set
\begin{equation}\label{eq:coef_lim}
\alpha_{x,y,r} = \frac{\lambda_r(y)}{\lambda_r(x)},
\end{equation}
and we denote by $\X^r = (\X^r_t)_{t \geqslant 0}$ the underlying Fleming--Viot $n$-particle processes.
We work in this section in the following setting:
\begin{assu}\label{assu:assu1}
The number $n$ of particles is fixed, $\lim_{r\rightarrow\infty}\underline\lambda_r = +\infty$, and
for all $x,y \in D$, there exists $\alpha_{x,y,\infty}\in [0,\infty]$ such that:
\begin{equation}\label{eq:lim_coef}
\lim_{r\rightarrow\infty}\alpha_{x,y,r}=\alpha_{x,y,\infty}.
\end{equation}
\end{assu}

As explained above, 
it is expected that $(\pi(\X^r_t))_{t \ge 0}$ converges to a process $(\delta_{Y_t})_{t \ge 0}$ where $(Y_t)_{t \ge 0}$ is a continuous-time Markov chain on $D$. 
Intuitively, the evolution of $(Y_t)_{t \ge 0}$ is driven by the following process. If all the particles are in $x\in D$ at a given time $t$ (i.e. $Y_t=x$), then the rate at which one particle goes from $x$ to $y$ is given by $nq(x,y)$. After such a move of one particle from $x$ to $y$, since 
the selection mechanism occurs on a faster time scale than the mutation process, all the particles will condensate on one of the two states~$x$ or~$y$ before any other jumps occurs. We shall show in Lemma~\ref{lem:game} below that the probability that all particles condensate on the new site $y$ is given by a Gambler's Ruin problem, and is equal to $(\alpha_{x,y,r}-1)/(\alpha_{x,y,r}^n-1)$. This justifies the introduction of the new jump rates:
\begin{equation}\label{eq:jump-rates-r}
    \tilde q_{r}(x,y) = \left\{ \begin{aligned} n &q(x,y) \frac{\alpha_{x,y,r}-1}{\alpha_{x,y,r}^n-1} &\text{ if }\alpha_{x,y,r} \neq 1, \\ &q(x,y)&\text{ otherwise,} 
    \end{aligned}\right.
\end{equation}
as well as their $r \to \infty$ limit:
\begin{equation}\label{eq:jump-rates-lim}
    \tilde q_{\infty}(x,y) = \left\{ \begin{aligned} n &q(x,y) \frac{\alpha_{x,y,\infty}-1}{\alpha_{x,y,\infty}^n-1} &\text{ if }\alpha_{x,y,\infty} \neq 1, \\ &q(x,y)&\text{ otherwise.} 
    \end{aligned}\right.
\end{equation}
In the above, $\tilde q_{\infty}(x,y)=0$ in the case $\alpha_{x,y,\infty} = \infty$. 

\medskip
Our first theorem concerns the limit of the time-marginal distribution of the empirical measure process under Assumption~\ref{assu:assu1}. For any random variable $\pi$ in $\mathcal{M}^1_n(D)$, we identify the probability measure $\mathcal{L}aw(\pi)$ as an element of $\ell^1(\mathcal{M}^1_n(D))$ and therefore use the $\ell^1$ norm to measure distances between such probability measures. 
 
\begin{thm}\label{thm:soft-kill-1}
Under Assumption~\ref{assu:assu1}, for all $\eta\in\mathcal M^1_n(D)$, there exist a (deterministic) probability measure $\eta_\infty\in \mathcal M^1(D)$ described in Lemma~\ref{lem:initial-condi} and a random variable $Y_0$ of law $\eta_\infty$, such that if $(Y_t)_{t \geqslant 0}$ is a Markov process on $D$ with jump rate $(\tilde q_{\infty}(x,y))_{(x,y)\in D}$ and initial condition $Y_0$, then: for all $t>0$,
\[
\lim_{r\rightarrow\infty} \| \mathcal Law\po \pi(\X^r_t) \pf - \mathcal Law\po \delta_{Y_t} \pf\|_1=0.
\]
Moreover, if $\pi(\X^r_0)=\delta_z$ for some $z\in D$, then for all $T>0$,  there exists $C>0$ such that if $(Y_t^r)_{t \geqslant 0}$ is a Markov process on $D$ with jump rate $(\tilde q_{r}(x,y))_{(x,y)\in D}$ and initial condition $Y^r_0=z$, then: 
\[
\sup_{t \in [0,T]} \| \mathcal Law\po \pi(\X^r_t) \pf - \mathcal Law\po \delta_{Y_t^r} \pf\|_1 \leqslant \frac{C}{\underline \lambda_r}.
\]
\end{thm}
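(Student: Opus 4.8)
The plan is to treat $L^r = L_m + L^r_s$ as a slow/fast system, in which $L^r_s$ is the fast part (of order $\underline\lambda_r$), and to run an averaging argument. What makes it non-standard, as announced in the introduction, is that the fast dynamics generated by $L^r_s$ is not ergodic: every Dirac mass in $\Delta$ is an absorbing state for it. I would therefore replace the usual projection onto the invariant measure of the fast dynamics by the \emph{fixation kernel}
\[
\Pi^r f(\xi) = \sum_{x \in D} p^r_x(\xi)\, f(\delta_x),
\]
where $p^r_x(\xi)$ is the probability that the pure-selection process started from $\xi$ is absorbed at $\delta_x$. This $\Pi^r$ is the limit of $e^{s L^r_s}$ as $s \to \infty$, it satisfies $L^r_s \Pi^r = \Pi^r L^r_s = 0$ and $(\Pi^r)^2 = \Pi^r$, and it projects onto $\ker L^r_s$. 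The candidate effective generator on $\Delta$ is $\Pi^r L_m$: since $\Pi^r$ is the identity on $\Delta$, writing $\xi_{x\to y} := \delta_x + (\delta_y - \delta_x)/n$ one gets $(\Pi^r L_m \Pi^r f)(\delta_x) = \sum_y n q(x,y)\big(\Pi^r f(\xi_{x\to y}) - f(\delta_x)\big)$, and Lemma~\ref{lem:game} — which says exactly that $p^r_{\,\cdot}(\xi_{x\to y})$ is carried by $\{\delta_x,\delta_y\}$ with weight $(\alpha_{x,y,r}-1)/(\alpha_{x,y,r}^n-1)$ on $\delta_y$ — turns this into $\sum_y \tilde q_r(x,y)\big(f(\delta_y) - f(\delta_x)\big) =: \bar L^r f(\delta_x)$, the generator of the chain with rates $\tilde q_r$; the same computation with $\alpha_{x,y,\infty}$ gives the generator $\bar L^\infty$ of $\tilde q_\infty$.

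The quantitative heart of the proof is a collection of estimates on the pure-selection dynamics, and it is here that keeping $n$ fixed is essential. From any non-absorbing configuration the rate $R(\xi)$ of a configuration-changing selection event is at least $\underline\lambda_r/(n-1)$; moreover the selection process started from $\xi$ lives on the at most $(n+1)^n$ configurations supported in $\mathrm{supp}(\xi)$ and reaches $\Delta$ from any of them, so the expected number $N(\xi)$ of configuration changes before fixation is at most a constant $C_n$, uniformly in $r$ and $\xi$ (a compactness argument in the ratios $\alpha_{x,y,r}\in[0,\infty]$). Since each waiting time between configuration changes is $\mathrm{Exp}$-distributed with rate $R \geqslant \underline\lambda_r/(n-1)$, this gives $\sup_\xi \E^{L^r_s}_\xi[T_{\mathrm{abs}}] \leqslant C_n/\underline\lambda_r$, hence, for any $g$ with $\Pi^r g = 0$, a solution $\chi$ of the Poisson equation $L^r_s \chi = -g$ — namely $\chi(\xi) = \E^{L^r_s}_\xi\big[\int_0^{T_{\mathrm{abs}}} g(Z_s)\,\dd s\big]$ — with $\|\chi\|_\infty \leqslant (C_n/\underline\lambda_r)\|g\|_\infty$ and $\chi \equiv 0$ on $\Delta$. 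It also gives, via the Lyapunov function $N$ (which satisfies $L^r_s N = -R \leqslant -c_n\underline\lambda_r N$ off $\Delta$, $N \equiv 0$ on $\Delta$, $N \geqslant 1$ off $\Delta$) and a Gr\"onwall estimate for $t \mapsto \E_{\delta_z}[N(\pi(\X^r_t))]$ (which is $\leqslant 2nQC_n/(c_n\underline\lambda_r)$ since it starts at $N(\delta_z)=0$), the bound $\sup_{t \geqslant 0} \mathbb{P}_{\delta_z}(\pi(\X^r_t) \notin \Delta) \leqslant C_n/\underline\lambda_r$.

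For the quantitative (second) statement, fix $t \in [0,T]$ and $f$ with $\|f\|_\infty \leqslant 1$. Put $\bar u(s,\cdot) = e^{(t-s)\bar L^r}(f|_\Delta)$ on $D$, $U(s,\xi) = \sum_x p^r_x(\xi)\bar u(s,x)$, let $\chi^r(s,\cdot)$ be the solution vanishing on $\Delta$ of $L^r_s \chi^r(s,\cdot) = \Pi^r L_m U(s,\cdot) - L_m U(s,\cdot)$ supplied above, and set $V(s,\cdot) = U(s,\cdot) + \chi^r(s,\cdot)$. Differentiating $\Phi(s) := (e^{s L^r} V(s,\cdot))(\delta_z)$ and using $L^r_s U(s,\cdot) = 0$, the defining equation of $\chi^r$, and the identity $\Pi^r L_m U(s,\cdot) = -\partial_s U(s,\cdot)$ — which is again Lemma~\ref{lem:game}, as it amounts to $(L_m U(s,\cdot))(\delta_x) = (\bar L^r \bar u(s,\cdot))(x)$ — all leading terms cancel, leaving $\Phi'(s) = \big(e^{s L^r}[L_m \chi^r(s,\cdot) + \partial_s \chi^r(s,\cdot)]\big)(\delta_z)$, of order $\underline\lambda_r^{-1}$ (the $\partial_s \chi^r$ term via the Poisson bound applied to $\partial_s U$, which is bounded by $\|\bar L^r\| \leqslant 2nQ$). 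Integrating over $[0,t]$: on the one hand $\Phi(0) = V(0,\delta_z) = (e^{t\bar L^r}(f|_\Delta))(z) = \E[f(\delta_{Y^r_t})]$ \emph{exactly} (the corrector vanishes on $\Delta$ and $p^r_x(\delta_z) = \mathbbm 1_{x=z}$); on the other hand $\Phi(t) = \E_{\delta_z}[V(t,\pi(\X^r_t))] = \E[f(\pi(\X^r_t))] + O(\underline\lambda_r^{-1})$, because $V(t,\cdot) = f$ on $\Delta$, $\|\chi^r(t,\cdot)\|_\infty = O(\underline\lambda_r^{-1})$ and $\mathbb{P}_{\delta_z}(\pi(\X^r_t) \notin \Delta) = O(\underline\lambda_r^{-1})$. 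Hence $|\E[f(\pi(\X^r_t))] - \E[f(\delta_{Y^r_t})]| \leqslant C(1+t)/\underline\lambda_r$; taking the supremum over $\|f\|_\infty \leqslant 1$ (by $\ell^1$--$\ell^\infty$ duality) and over $t \in [0,T]$ gives the announced bound, with $C$ depending only on $n$, $Q$ and $T$.

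The same computation from a general initial law $\eta$ yields $\mathbb{P}_\eta(\pi(\X^r_t) = \delta_w) = \sum_x p^r_x(\eta)\,\mathbb{P}_x(Y^r_t = w) + o_r(1)$ and $\mathbb{P}_\eta(\pi(\X^r_t) = \xi) \to 0$ for $\xi \notin \Delta$ (using $\mathbb{P}_\eta(\pi(\X^r_t) \notin \Delta) \to 0$, which follows from fixation of the selection dynamics over $[0,t/2]$ and the previous bound over $[t/2,t]$). Letting $r \to \infty$, $p^r_x(\eta) \to \eta_\infty(x)$ by Lemma~\ref{lem:initial-condi}, and $\tilde q_r \to \tilde q_\infty$ by continuity of $\alpha \mapsto (\alpha-1)/(\alpha^n-1)$ on $[0,\infty]$, so $\mathbb{P}_x(Y^r_t = w) \to \mathbb{P}_x(Y_t = w)$ and therefore $\mathbb{P}_\eta(\pi(\X^r_t) = \delta_w) \to \mathbb{P}(\delta_{Y_t} = \delta_w)$ with $Y_0 \sim \eta_\infty$; since all of these are probability measures on the countable set $\M^1_n(D)$, Scheff\'e's lemma upgrades this pointwise convergence of masses to convergence in $\ell^1$, which is the first assertion. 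I expect the main obstacle to be precisely this quantitative control of the fast dynamics — the uniform-in-$r$ bounds on $N(\xi)$, hence on $\E^{L^r_s}_\xi[T_{\mathrm{abs}}]$ and on $\mathbb{P}_{\delta_z}(\pi(\X^r_t)\notin\Delta)$, all of which genuinely rely on $n$ being fixed — together with the conceptual point of handling a fast dynamics with a whole set $\Delta$ of absorbing states rather than a unique stationary law, which is what forces the use of the fixation kernel $\Pi^r$ and of the Poisson equation on its range.
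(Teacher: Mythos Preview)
Your proposal is correct and follows the same averaging strategy as the paper: decompose $L^r = L_m + \underline\lambda_r L_{s,0}$, use the committor functions (your $p^r_x$, the paper's $\psi^r_x$) and the Gambler's ruin computation of Lemma~\ref{lem:game} to identify the effective generator on $\Delta$ as that of $\tilde q_r$, and close the argument by solving a Poisson equation for the fast part and bounding the remainder.

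The one genuine difference is the side of the duality on which the argument is run. The paper works in $\ell^1(\mathcal M^1_n(D))$ with the \emph{law} $u(t,\cdot)$ and the adjoint operators: it sets $u_0(t)=\mathcal Law(\delta_{Y^r_t})$, solves $L_{s,0}^* u_1 = \partial_t u_0 - L_m^* u_0$ via Lemma~\ref{lem:poisson-eq}, and bounds the remainder $v=u-u_0-\underline\lambda_r^{-1}u_1$ directly in $\|\cdot\|_1$ by Duhamel. You work in $\ell^\infty$ with test functions and the semigroup: you set $U=\Pi^r(e^{(t-s)\bar L^r}f)$, solve $L^r_s\chi^r=\Pi^r L_m U-L_mU$, and interpolate $\Phi(s)=e^{sL^r}(U+\chi^r)$. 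The cancellation you use, $\Pi^r L_m U=-\partial_s U$, is exactly the dual of the paper's verification that $\langle \partial_t u_0 - L_m^* u_0,\psi^r_x\rangle=0$ for all $x$. The paper's formulation avoids your separate Lyapunov step for $\mathbb P_{\delta_z}(\pi(\X^r_t)\notin\Delta)$, since that mass is already absorbed in $\|v\|_1+\underline\lambda_r^{-1}\|u_1\|_1$; conversely your semigroup interpolation makes the structure $\Phi(t)-\Phi(0)=\int_0^t(\text{small})$ very transparent.

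One point where the paper is sharper: for the uniform-in-$r$ Poisson bound you invoke a ``compactness argument in the ratios $\alpha_{x,y,r}\in[0,\infty]$'' to bound the expected number of selection jumps. This can be made to work, but the paper sidesteps it entirely: Lemma~\ref{lem:different-time-scale} gives a direct exponential tail $\mathbb P_\xi(\tau\geqslant t)\leqslant Ce^{-ct}$ for the normalized selection dynamics, uniformly in $r$ and $\xi$, by a one-step minorization (with explicit, if crude, constants depending only on $n$). This feeds straight into the $\ell^1$ Poisson estimate of Lemma~\ref{lem:poisson-eq} and is more robust than controlling the embedded chain. For the general initial condition the paper also argues as you do (first reach $\Delta$ on a short time scale, then apply the Dirac case), but with the explicit intermediate time $1/\sqrt{\underline\lambda_r}$ and Lemma~\ref{lem:initial-condi} for the limit $\eta_\infty$; your use of Scheff\'e to pass from pointwise to $\ell^1$ convergence is a clean way to finish.
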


In the first statement of the theorem, the marginal distribution is taken at time $t>0$. Indeed, if the Fleming--Viot particle system is initialized with a distribution which is not a Dirac distribution, namely with particles spread among several sites, then on the short time scale $1/\underline\lambda_r$, the selection mechanism makes all particles condensate in a single site, whose distribution is precisely the probability measure $\eta_\infty$. This shows that, in general, the convergence of $\mathcal Law\po \pi(\X^r_t) \pf$ to $\mathcal Law\po \delta_{Y_t}\pf$ cannot hold uniformly on $[0,T]$. However, if the initial distribution of the Fleming--Viot particle system is already condensed on a single site, then the second statement of the theorem shows that the distance between $\mathcal Law\po \pi(\X^r_t) \pf$ and $\mathcal Law\po \delta_{Y^r_t}\pf$ may be controlled uniformly on $[0,T]$, with a quantitative bound which is obtained by using averaging methods, as presented in~\cite[Chapter 16]{Pavliotis-Stuart}. The remaining distance, between $\mathcal Law\po \delta_{Y^r_t}\pf$ and $\mathcal Law\po \delta_{Y_t}\pf$, also vanishes (see Lemma~\ref{lem:conv-Yr-to-Y}), with a rate which depends on the convergence of $(\alpha_{x,y,r})_{x,y \in D}$ toward $(\alpha_{x,y,\infty})_{x,y \in D}$. The proof of Theorem~\ref{thm:soft-kill-1} is given in Section~\ref{sec:general-rate}. 

\medskip
We now state a trajectorial convergence result for the Fleming--Viot particle system towards the $\Delta$-valued dynamics $(\delta_{Y_t})_{t \geqslant 0}$. For reasons which are detailed below, the Skorohod topologies for cadlag processes are not adapted to this system, and therefore we need to introduce another topology on the space of sample paths. To proceed, we first introduce the total variation norm $\|\cdot\|_{TV}$, and defined by: for $\nu,\mu\in \mathcal M^1(D)$,
 \[
 \|\mu-\nu\|_{TV} = \sum_{x\in D}|\mu(x) - \nu(x)|.
 \]
Note that, since $D$ is a finite or countably infinite set, the weak topology on $\mathcal M^1(D)$ is metrized by the total variation norm. Next, for any $T>0$, let $\mathcal D([0,T],\mathcal M^1(D))$ be the set of cadlag functions from $[0,T]$ to $\mathcal M^1(D)$. For $\pi,\tilde \pi \in \mathcal D([0,T],\mathcal M^1(D))$, we define the $L^1_{TV}$ distance by:
\begin{equation}\label{eq:distance-L1}
\|\pi - \tilde \pi\|_{L^1_{TV}}  =  \int_0^T \|\pi_t(x) - \tilde \pi_t(x)\|_{TV} \dd t = \int_0^T \sum_{x\in D} |\pi_t(x) - \tilde \pi_t(x)| \dd t.
\end{equation}
This defines a distance on $\mathcal D([0,T],\mathcal M^1(D))$, and the associated topology coincides in our case with the so-called Meyer-Zheng topology~\cite{meyer1984tightness}. We then have:

\begin{thm}\label{thm:conv-trajectories}
Under Assumption~\ref{assu:assu1}, for all $T>0$, the process $\po\pi(\X^r_t)\pf_{t\in [0,T]}$ converges in distribution, in $L^1_{TV}$, to the process $\po \delta_{Y_t}\pf_{t\in [0,T]}$ introduced in Theorem~\ref{thm:soft-kill-1}.
\end{thm}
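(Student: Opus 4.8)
The plan is to couple, for each $r$, the empirical process $(\pi(\X^r_t))_{t\in[0,T]}$ with a copy $(Y^r_t)_{t\in[0,T]}$ of the Markov chain with jump rates $(\tilde q_r(x,y))$ from Theorem~\ref{thm:soft-kill-1}, in such a way that $\|\pi(\X^r_\cdot)-\delta_{Y^r_\cdot}\|_{L^1_{TV}}\to0$ in probability, and then to let $r\to\infty$ in $(\delta_{Y^r_\cdot})$; since the $L^1_{TV}$ (Meyer--Zheng) topology is coarser than the Skorokhod $J_1$ topology, the trajectorial statement follows. The whole argument rests on the time-scale separation already exploited for Theorem~\ref{thm:soft-kill-1}: a \emph{competition phase} (a time interval on which only the selection mechanism is active) started from a configuration supported on at most $n$ sites is, in the ``vector of occupation counts'' variable, a finite absorbing Markov chain with absorbing set $\Delta$, and its expected fixation time is of order $\underline\lambda_r^{-1}$, with a constant $C_n$ depending only on $n$ (as established in the course of proving Theorem~\ref{thm:soft-kill-1}).

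Assume first $\pi(\X^r_0)=\delta_z$. Let $0<T_1<T_2<\cdots$ be the successive mutation times; they are dominated by a Poisson process of rate $nQ$, so $\mathbb E[N^r_T]\le nQT$ with $N^r_T=\#\{k:T_k\le T\}$. By the fixation-time bound, the expected total length $\Sigma^r$ of the competition phases contained in $[0,T]$ satisfies $\mathbb E[\Sigma^r]\le\mathbb E[N^r_T]\,C_n/\underline\lambda_r\to0$, and the probability of the event $B_r$ that some competition phase is interrupted by a further mutation is $O(\underline\lambda_r^{-1})\to0$. On $\Omega\setminus B_r$ every configuration visited on $[0,T]$ is either a Dirac mass or a two-point configuration $\tfrac{n-1}{n}\delta_x+\tfrac1n\delta_y$ produced by one mutation out of $\delta_x$. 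We take $Y^r$ to be the chain with rates $\tilde q_r$ started from $z$, coupled with $\pi(\X^r)$ so that, on $\Omega\setminus B_r$, its jumps occur precisely at the mutation times $T_k$ at which the configuration passes from $\delta_x$ to $\tfrac{n-1}{n}\delta_x+\tfrac1n\delta_y$ and the ensuing competition fixates at $y$; such a coupling exists because, by Lemma~\ref{lem:game}, this fixation probability equals $(\alpha_{x,y,r}-1)/(\alpha_{x,y,r}^n-1)$, so that this recipe produces exactly the rates $\tilde q_r$ up to the first interruption of a competition phase by a mutation, after which the coupling is released and $Y^r$ continues as an independent chain with rates $\tilde q_r$. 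On $\Omega\setminus B_r$ one then has $\delta_{Y^r_t}=\pi(\X^r_t)$ for every $t$ outside the competition intervals, whence $\mathbb E\|\pi(\X^r_\cdot)-\delta_{Y^r_\cdot}\|_{L^1_{TV}}\le 2\,\mathbb E[\Sigma^r]+2T\,\mathbb P(B_r)\to0$.

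Since $\tilde q_r(x,y)\to\tilde q_\infty(x,y)$ for all $x,y$ and $\sum_y\tilde q_r(x,y)\le nQ$ uniformly in $r$ (so no explosion occurs in the limit), the chains $(Y^r_t)_{t\in[0,T]}$ converge in distribution in $\mathcal D([0,T],D)$ to the chain $(Y_t)_{t\in[0,T]}$ with rates $\tilde q_\infty$ and $Y_0=z$ — the trajectorial counterpart of Lemma~\ref{lem:conv-Yr-to-Y}, standard for finite-state continuous-time chains with converging bounded rates. Applying the continuous map $y\mapsto\delta_y$ and using that $J_1$ convergence implies $L^1_{TV}$ convergence, $\delta_{Y^r_\cdot}\Rightarrow\delta_{Y_\cdot}$ in $L^1_{TV}$; combined with the coupling estimate through a converging-together argument in the separable metric space $(\mathcal D([0,T],\mathcal M^1(D)),\|\cdot\|_{L^1_{TV}})$, this gives $\pi(\X^r_\cdot)\Rightarrow\delta_{Y_\cdot}$ in $L^1_{TV}$ when $\pi(\X^r_0)$ is a Dirac mass. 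For a general initial law $\eta\in\mathcal M^1_n(D)$, one splits $[0,T]$ at the initial fixation time $\sigma^{(0)}_r$, whose expectation is again $O(\underline\lambda_r^{-1})$ so that its contribution to the $L^1_{TV}$ distance vanishes, and applies the strong Markov property at $\sigma^{(0)}_r$ together with the convergence $\mathcal Law(\pi(\X^r_{\sigma^{(0)}_r}))\to\eta_\infty$ (Lemma~\ref{lem:initial-condi}) and the Dirac-data case established above.

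The main obstacle is the estimate in the second paragraph: making precise, uniformly in $r$, that with probability tending to $1$ the system condenses before the next mutation, so that the naturally coupled chain $Y^r$ carries \emph{exactly} the rates $\tilde q_r$ and tracks $\pi(\X^r)$ up to a time set of vanishing Lebesgue measure; this is the role of the fixation-time estimate $C_n/\underline\lambda_r$ and of the overlap probability $O(\underline\lambda_r^{-1})$, and it is the quantitative heart of the proof. The other ingredients — convergence of the finite-state chains $Y^r$, the comparison $J_1\Rightarrow L^1_{TV}$, and the converging-together argument — are routine. (Alternatively one can avoid the explicit coupling: the expected number of jumps of $\pi(\X^r)$ on $[0,T]$ is bounded uniformly in $r$, since each competition phase contributes $O(1)$ selection jumps in expectation with an $n$-dependent constant, which yields tightness in the Meyer--Zheng topology via~\cite{meyer1984tightness}; the limit is then identified through the martingale problem for the $L_s$-harmonic extensions $F_r$ of functions on $D$, using that $LF_r=L_mF_r$ coincides on $\Delta$ with the generator of $Y^r$.)
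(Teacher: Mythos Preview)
Your strategy coincides with the paper's: couple $\pi(\X^r)$ with a chain $Y^r$ of rates $\tilde q_r$ so that their $L^1_{TV}$ distance vanishes, then pass from $Y^r$ to $Y$ via Skorokhod convergence of the chains (the paper's Lemmas~\ref{lem:conv-Yr-to-Y-traj} and~\ref{lem:coupling-traj}). The identification of the good event $B_r^c$ (the paper's $\mathcal A_r(T)$), the bound $\mathbb E[\tilde N(T)]\le nQT$, and the estimate $\mathbb E[\Sigma^r]\to0$ are all as in the paper.

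There is, however, a genuine gap in your coupling construction. You assert that placing the jumps of $Y^r$ at the successful mutation times $T_k=\sigma_k$ ``produces exactly the rates $\tilde q_r$''. It does not: under your recipe the holding time of $Y^r$ in a state $x$ is the sum of the exponential sojourns $\sigma_{j+1}-\tau_j$ \emph{plus} the competition lengths $\tau_j-\sigma_j$ of the intervening unsuccessful excursions, hence strictly longer than the exponential clock of a $\tilde q_r$-chain. Consequently the process you build does not have the $\tilde q_r$-chain law, the claimed coupling does not exist as stated, and your step ``$Y^r\Rightarrow Y$ by convergence of the rates $\tilde q_r\to\tilde q_\infty$'' is applied to the wrong object.

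The paper repairs exactly this point by composing with the time-change
\[
s(t)=\inf\Bigl\{s\geqslant 0:\int_0^s\mathbbm 1_{\pi_u\in\Delta}\,\dd u>t\Bigr\},
\]
which excises the competition intervals; on $\mathcal A_r(T)$ the process $\bar\eta_{s(\cdot)}$ then \emph{is} an honest $\tilde q_r$-chain, by the thinning argument you invoke. The price is that one must also control $\|\bar\eta-\bar\eta_{s(\cdot)}\|_{L^1_{TV}}$, for which the paper introduces a second event $\mathcal B_r(T)=\{\sum_k(\tau_k-\sigma_k)<\min_k(\sigma_{k+1}-\tau_k)\}$ ensuring that the time-shift $s(t)-t\le\Sigma^r$ never skips over a jump of $\bar\eta$. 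Your simpler bookkeeping avoids this extra event but leaves the gap above; you can close it either by inserting the time-change, or by observing directly that your process and the genuine $\tilde q_r$-chain differ, after an obvious coupling of their embedded skeletons, by a time-shift bounded by $\Sigma^r$, hence by the same vanishing quantity in $L^1_{TV}$.
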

The proof of this theorem is given in Section~\ref{sec:conv-trajectories}. 

\medskip
Let us conclude this section with two remarks. First, due to the nature of the process, the convergence in Theorem~\ref{thm:conv-trajectories} does not hold in the Skorokhod topology based on $L^\infty$ norms, but in the $L^1_{TV}$ topology which is a weaker topology. Indeed, when a particle jumps from $x$ to $y$, one observes motions from $x$ to $y$ and then back to~$x$ due to the selection mechanism,  over a time scale of order $1/\underline{\lambda}_r$. This implies jumps of non vanishing heights (multiple of $1/n$) in the process $\pi(\X_t^r)$, and therefore, one cannot expect convergence to hold in the Skorokhod topology. Second, since the mapping $\pi \in \mathcal{D}([0,T],\mathcal M^1(D)) \mapsto \pi_t \in \mathcal M^1(D)$ is not continuous with respect to the $L^1_{TV}$ distance,
the convergence in Theorem~\ref{thm:conv-trajectories} 
does not imply the convergence of the time-marginal distribution from Theorem~\ref{thm:soft-kill-1}.

\subsection{Infinitely many particles}\label{ss:res-infty}

In this section, we are interested in the asymptotic regime where both the number of particles and the killing rates go to infinity. Therefore, in addition to the $r$-indexed family $(\lambda_r)_{r\in\N}$ of death rates, we introduce $(n_r)_{n\in\N} \subset \N^{\N}$ a sequence of numbers of particles. In order to keep the time scale separation~\eqref{eq:time-scale-separation} between the mutation process and the selection mechanism, 
we impose 
that $\underline \lambda_r/n_r \to \infty$, where $\underline \lambda_r$ remains defined by~\eqref{eq:inf-lambda}. Notice that this condition is necessary for the condensation phenomenon to occur: indeed, as already mentioned in Section~\ref{sec:motivation}, in the case $\underline{\lambda}_r = cn_r$, $\pi(\X^r_t)$ converges under a proper rescaling to the Fleming--Viot super-process and therefore exhibits a diffusive behaviour, see~\cite{Eth00}.

Last, we let $(\X_0^r)_{r\in\N}$ be a sequence of initial conditions, such that $\X_0^r\in D^{n_r}$, and we will again denote by $\X^r = (\X^r_t)_{t \geqslant 0}$ the associated Fleming--Viot $n_r$-particle process.

\subsubsection{Nearly homogeneous killing rates}
Compared to the previous setting, we restrict ourselves to a situation where $\lambda_r(x)$ is a bounded perturbation of $\underline \lambda_r$. In this regime, the quantities $\alpha_{x,y,\infty}$ defined in the previous section are all equal to $1$, and therefore the jump rates $\tilde{q}_\infty(x,y)$ in~\eqref{eq:jump-rates-lim} are equal to $q(x,y)$ whatever the value of $n$. As a consequence, letting $n \to \infty$, one may expect the limiting jump rates to be simply $q(x,y)$, and this will indeed be the case as stated in Theorem~\ref{thm:soft-kill-2} below. 

Here is the precise setting of this section:
\begin{assu}\label{assu:assu2}
\[
\sup_{r \in\N} \sup_{x,y\in D} |\lambda_r(x) - \lambda_r(y)| < \infty, \quad
\lim_{r \rightarrow \infty} \frac{\underline \lambda_r}{n_r} = \infty,
\quad \text{and} \quad
\exists \eta\in\mathcal M^1(D), \, \pi(\X_0^r) \underset{r\rightarrow\infty}{\rightarrow}\eta \text{ weakly}.
\]
\end{assu}

\begin{thm}\label{thm:soft-kill-2}
Let us assume that Assumption~\ref{assu:assu2} holds.
 Let $(Y_t)_{t \geqslant 0}$ be a Markov process on $D$ with jump rate $( q(x,y))_{(x,y)\in D}$ and initial condition $\eta$. Then, the following convergence in law holds: for all $t>0$,
\[
\pi(\X^r_t) \underset{r\rightarrow\infty}{\rightarrow} \delta_{Y_t},
\]
where the convergence in law is considered with respect to the topology defined by the total variation distance on $\M^1(D)$.
Moreover, if $\pi(\X_0^r)=\delta_z$ for some $z\in D$ and for all $r\in\N$, then for all $T>0$, there exists $C>0$ such that for all $F:\mathcal{M}^1(D)\to \R$ bounded and Lipschitz continuous, and for all $t \in [0,T]$:
\[
\left|\E\po F\po\pi(\X^r_t) \pf\pf - \E\po F\po\delta_{Y_t} \pf\pf \right| \leqslant C\po\|F\|_{\mathrm{Lip}} + \|F\|_{\infty}\pf\sqrt{\frac{n_r}{\underline \lambda_r}}.
\]
\end{thm}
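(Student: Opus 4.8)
The plan is to decompose the dynamics into a fast selection part and a slow mutation part, and to control both separately. First I would establish the time-scale separation quantitatively: starting from any configuration, the pure selection dynamics $L_s$ drives the empirical measure to a Dirac mass $\delta_W$ for some random $W \in D$. Under Assumption~\ref{assu:assu2}, $\lambda_r(x) = \underline\lambda_r + O(1)$ uniformly, so on the fixation time scale the selection mechanism behaves, up to multiplicative factors bounded above and below, like the neutral Moran process at speed $\underline\lambda_r/n$. I would invoke (or reprove, by tracking the number of occupied sites as a supermartingale) the fact that the fixation time under $L_s$ has expectation of order $n_r/\underline\lambda_r$, so that by Assumption~\ref{assu:assu2} it is $o(1)$ on the mutation time scale. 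The bounded perturbation hypothesis $\sup_{x,y}|\lambda_r(x)-\lambda_r(y)| < \infty$ is what makes the absorbing site $W$ asymptotically uniform over the occupied support — in the $n\to\infty$ limit the Gambler's-Ruin bias $(\alpha_{x,y,r}-1)/(\alpha_{x,y,r}^{n}-1)$ tends to $1/n$ since $\alpha_{x,y,r}\to 1$ while $\alpha_{x,y,r}^{n}$ stays $O(1)$ only if $n(\alpha_{x,y,r}-1)$ stays bounded — hence one must be slightly careful, but in the end each newly created particle at a new site is reabsorbed into the old condensate with probability $1-O(1/n)$, giving effective jump rate $nq(x,y)\cdot(1/n) = q(x,y)$.

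Second, I would set up a one-step coupling. Conditioned on being condensed at $z$ at time $s$, the next mutation event that moves a particle out of $z$ occurs at rate $n_r \sum_{y} q(z,y)$; after it, the selection mechanism either pulls the lone particle back to $z$ (probability $1 - O(1/n_r)$) or, with probability $\sim 1/n_r$ per unit of the original rate, fixes at the new site $y$ — net rate to actually move the condensate from $z$ to $y$ is $\approx q(z,y)$, matching the generator of $(Y_t)$. The error terms are: (i) the probability that a second mutation occurs during a fixation window, which is $O(Q \cdot n_r/\underline\lambda_r)$ per window and hence contributes the advertised $\sqrt{n_r/\underline\lambda_r}$ after summing over the $O(1)$ windows in $[0,T]$ and taking a square root coming from a martingale/$L^2$ estimate; (ii) the deviation of the reabsorption probability from the idealized value; (iii) the $o(1)$ fixation time itself, which shifts jump times by $O(n_r/\underline\lambda_r)$. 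For the first marginal statement with general initial law $\eta$, I would first run the selection dynamics for a vanishing amount of time to reach a condensed state distributed (in the $r\to\infty$ limit) according to $\eta$ itself — here the point is that a single mutation is needed to *leave* a site, so the initial spreading of mass over the support of $\eta$ does not bias which site the system condenses on at leading order when $n_r\to\infty$, because the neutral Moran fixation probabilities are exactly proportional to initial occupation numbers, i.e. to $\pi(\X_0^r)\to\eta$. This is the analogue of Lemma~\ref{lem:initial-condi} in the $n\to\infty$ regime, and is in fact simpler because the limiting initial law is just $\eta$, with no $n$-dependent distortion.

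Third, I would make the quantitative bound rigorous for the condensed initial condition $\pi(\X^r_0)=\delta_z$. The natural route is a semigroup/Duhenmel comparison: write $\E F(\pi(\X^r_t)) = (e^{tL}F)(\delta_z)$ and compare with $(e^{t\tilde L}F)(\delta_z)$ where $\tilde L$ generates $(\delta_{Y_t})$, using $e^{tL} - e^{t\tilde L} = \int_0^t e^{(t-s)L}(L-\tilde L)e^{s\tilde L}\,\dd s$. On functions of the form $F\circ\pi$ restricted to condensed states, $L - \tilde L$ is small in the sense that applying $L$ produces, besides the $\tilde L$-part, terms supported on non-condensed configurations weighted by the fast rate $\underline\lambda_r$; these are then killed by the fast relaxation of $e^{(t-s)L}$ back to $\Delta$ in time $O(n_r/\underline\lambda_r)$. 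Alternatively, and perhaps more cleanly, one mimics the averaging argument of \cite[Chapter~16]{Pavliotis-Stuart} alluded to after Theorem~\ref{thm:soft-kill-1}: construct a corrector $\chi_r$ on $\mathcal M^1_{n_r}(D)$ solving a Poisson equation for the fast generator $L_s$, with source $(L_m - \tilde L)(F\circ\pi)$, and control $\|\chi_r\|_\infty = O(\sqrt{n_r/\underline\lambda_r})$. The main obstacle, and where the square-root rather than first-power rate comes from, is precisely this corrector estimate: the fast dynamics $L_s$ is not ergodic — it has the whole of $\Delta$ as absorbing states — so the Poisson equation must be solved relative to the harmonic (fixation-probability) functions, and the relevant spectral gap on the orthogonal complement scales like $\underline\lambda_r/n_r^2$ (neutral Moran coalescence rate), not $\underline\lambda_r/n_r$; combined with a source of size $O(n_r)$ from $L_m$ this yields fluctuations of size $\sqrt{n_r/\underline\lambda_r}$. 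Carefully identifying this gap and the projection onto fixation probabilities — and checking that the $O(1)$ perturbation in the $\lambda_r$'s does not destroy it — is the technical heart of the argument.
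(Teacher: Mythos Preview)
Your proposal takes a genuinely different route from the paper, and the quantitative part contains a real gap.

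\medskip

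\textbf{What the paper actually does.} The paper's proof is considerably more elementary than anything involving correctors or couplings. It rests on two direct generator computations:
\begin{enumerate}
    \item Apply the full generator $L$ to the scalar function $g_2(\xi)=\sum_x \xi(x)^2$ and obtain the differential inequality
    \[
    \partial_t \E(g_2(\pi_t)) \geqslant \frac{2\underline\lambda_r}{n_r}\bigl(1-\E(g_2(\pi_t))\bigr) - \Bigl(2Q+\tfrac{n_r}{n_r-1}\|m\|_\infty\Bigr),
    \]
    which immediately gives $1-\E(g_2(\pi_t))=O(n_r/\underline\lambda_r)$ (Lemma~\ref{lem:high-corr}). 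This shows concentration near $\Delta$ without any fixation-time or spectral analysis.
    \item Write the Kolmogorov equation for the \emph{mean} $\E(\pi_t(x))$; it differs from the Kolmogorov equation for $\mathbb P(Y_t=x)$ by the term $\tfrac{n_r}{n_r-1}\sum_{y\neq x}\E(\pi_t(x)\pi_t(y))(\lambda_r(y)-\lambda_r(x))$, whose $\ell^1$-norm is bounded by $\|m\|_\infty(1-\E(g_2(\pi_t)))=O(n_r/\underline\lambda_r)$ thanks to step~1 (Lemma~\ref{lem:conv-l1-expe}).
\end{enumerate}
The theorem then follows from an $\varepsilon$-interpolation: $|\E F(\pi_t)-\E F(\delta_{Y_t})|\lesssim \varepsilon\|F\|_{\mathrm{Lip}}+\varepsilon^{-1}\|F\|_\infty(1-\E\|\pi_t\|_\infty)+\dots$, and the square root $\sqrt{n_r/\underline\lambda_r}$ arises purely from optimising $\varepsilon+\varepsilon^{-1}(n_r/\underline\lambda_r)$. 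No coupling, no Poisson equation, no spectral gap.

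\medskip

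\textbf{Where your argument breaks.} Your third paragraph proposes to build a corrector $\chi_r$ solving a Poisson equation for $L_s$ and to bound $\|\chi_r\|_\infty$. This is precisely the averaging method of Theorem~\ref{thm:soft-kill-1}, and the paper states explicitly (Section~\ref{ss:res-infty}, discussion after the conjecture) that ``the method used in the proof of Theorem~\ref{thm:soft-kill-1} fails in the case $n\to\infty$ as many estimates yield useless results''. Your attempted rescue does not hold up arithmetically: if, as you say, the relevant gap is $\underline\lambda_r/n_r^2$ and the source from $L_m$ has size $O(n_r)$, then the corrector is of order $n_r\cdot n_r^2/\underline\lambda_r=n_r^3/\underline\lambda_r$, not $\sqrt{n_r/\underline\lambda_r}$. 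There is no mechanism in your outline that produces the square root; the $L^2$/martingale hand-wave in your second paragraph is not connected to a concrete estimate. So as written, the quantitative claim is unsupported.

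\medskip

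\textbf{What each approach buys.} Your heuristics in the first two paragraphs (neutral Moran fixation, Gambler's-Ruin bias $\to 1/n$, fixation probability proportional to initial mass giving back $\eta$) are correct and give good intuition for \emph{why} the limit is $(Y_t)$ with rates $q(x,y)$. The paper's approach bypasses all of this: by working with $g_2$ and the mean occupation directly, it never needs to analyse fixation of the selection dynamics, and the bounded-variation hypothesis on $\lambda_r$ enters only as the factor $\|m\|_\infty$ multiplying the cross terms $\E(\pi_t(x)\pi_t(y))$. This is what makes the argument go through uniformly in $n_r$, which your corrector route cannot.
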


\begin{rem}
    Since the convergence in law of $\pi(\X^r_t)$ toward $\delta_{Y_t}$ in Theorem~\ref{thm:soft-kill-2} is obtained for any sequence of initial conditions $\pi(\X^r_0)$ which converge weakly to $\eta$, it is easily seen that this result actually implies the convergence in the sense of the finite-dimensional distributions (in time) of the process $(\pi(\X^r_t))_{t \geqslant 0}$ to $(\delta_{Y_t})_{t \geqslant 0}$.
\end{rem}

The proof is completely different in nature from the ones of Theorems~\ref{thm:soft-kill-1} and~\ref{thm:conv-trajectories}, and relies on the Kolmogorov equation associated with the dynamics on $(\pi(\X_t^r))_{t \ge 0}$, see Section~\ref{sec:rate-bounded-var}. Notice that
the speed of convergence in Theorem~\ref{thm:soft-kill-2} is not optimal. Indeed, choosing $n_r=n$ constant (which is allowed in Assumption~\ref{assu:assu2}), Theorem~\ref{thm:soft-kill-2} yields a rate of convergence of order $1/\sqrt{\underline{\lambda}_r}$ whereas we know from Theorem~\ref{thm:soft-kill-1} that it is of order $1/\underline{\lambda}_r$.  

\subsubsection{A conjecture for general killing rates}

The objective of this section is to provide a conjecture on the limiting behaviour of $(\pi(\X^r_t))_{t \geqslant 0}$ in the general case where both $n_r$ and $\underline\lambda_r$ go to infinity in the regime~\eqref{eq:time-scale-separation}, but without assuming that $\lambda_r(x)$ is a bounded perturbation of $\underline\lambda_r$ (namely without the first hypothesis in Assumption~\ref{assu:assu2}). We conjecture that in this case the limiting behaviour of $(\pi(\X^r_t))_{t \geqslant 0}$ remains described by some process of the form $(\delta_{\overline Y_t})_{t \geqslant 0}$, where the dynamics of $(\overline Y_t)_{t \geqslant 0}$ is obtained by taking the $n \to \infty$ limit of the dynamics of the process $(Y_t)_{t \geqslant 0}$ identified in Theorem~\ref{thm:soft-kill-1}. The limits of the associated jump rates $\tilde{q}_\infty(x,y)$ defined in~\eqref{eq:jump-rates-lim} write
\begin{equation}
    \lim_{n\rightarrow\infty}\tilde q_{\infty}(x,y) = \left\{ \begin{aligned}  &q(x,y) &\text{ if }\alpha_{x,y,\infty} =1, \\ &\infty&\text{ if } \alpha_{x,y,\infty}<1, q(x,y)\neq 0, \\ & 0 &\text{ otherwise,}
    \end{aligned}\right.
\end{equation}
so the limiting dynamics is not obvious because of the possibility for some jump rates to take the value $\infty$. However, it can be understood as follows: in the limit $n\rightarrow\infty$, $Y_t$ will not stop on a site $x\in D$ that is connected to at least one other site $y\in D$ with a smaller killing rate, namely $\alpha_{x,y,\infty}<1$. In the case where there are several such $y$, the next site will be chosen among the ones with minimal death rate. 
Let us make this precise.

First the state space of $(\overline Y_t)_{t \ge 0}$ is:
\[
\mathfrak D = \left\{x\in D : \text{for all $y \in D$ such that $q(x,y)>0$, we have $\alpha_{x,y,\infty}\geqslant 1$}\right\}.
\]
From $x \in \mathfrak D$, the chain can make two kinds of jumps. First it can move to a point $y \in \mathfrak D$ such that $q(x,y)>0$ and $\alpha_{x,y,\infty}=1$, and this happens with a rate $q(x,y)$. Second, it can move to a point $y \in \mathfrak D$ accessible by visiting a point $z$ such that $q(x,z)>0$, $\alpha_{x,z,\infty}=1$, and $z$ is connected to at least one point with a smaller killing rate. In particular, $z\notin \mathfrak D$. In order to describe this second type of jump, let us define, for any $z \not\in \mathfrak D$ and $y \in \mathfrak D$, the set $\mathbb D_{z,y}$ of sequences $(z_0,\dots,z_k) \in D^{k+1}$ such that, for all $i \in \llbracket 0,k-1\rrbracket$, $q(z_i,z_{i+1})>0$, $\alpha_{z_{i},z_{i+1},\infty}<1$, and $\alpha_{z_{i+1},\tilde z,\infty} \geqslant 1$ for any $\tilde z$ such that $q(z_{i},\tilde z)>0 $. We next introduce the accessible points $y \in \mathfrak D$ from such a $z$:
\[\mathbb C_z = \left\{y \in \mathfrak D | \text{$\mathbb D_{z,y}$ is non empty}\right\}.\]
The chain $\overline Y$ can make a jump of the second type if the set
\[
\mathcal Z_{x,y} = \left\{ z\in D, \alpha_{x,z,\infty} = 1, q(x,z)>0, y\in  \mathbb C_z\right\}
\]
is non-empty. To compute the probability to go from $x\in \mathfrak D$ to $y\in \mathfrak D$ in this case, let us introduce:
\[
\mathbb C_z^1 = \left\{y \in  D| q(z,y)>0, \alpha_{z,y,\infty}<1\,\text{ and } \alpha_{y,\tilde z,\infty} \geqslant 1, \forall \tilde z , q(z,\tilde z)>0 \right\}.
\]
If $\mathbb C^1_z$ is non-empty, then one simply has $\mathbb C_z^1  =  \argmin_{y:q(z,y)>0} \alpha_{z,y,\infty}$. Besides, for $z\in \mathcal Z_{x,y}$ where $y\in \mathfrak D$, and for a sequence $z_0,\cdots,z_k\in \mathbb D_{z,y}$, one has $z_{i+1} \in \mathbb C^1_{z_i}$ for all $i \in \llbracket0,k-1\rrbracket$.
We are now in position to write that the probability to go instantaneously from $z\in \mathcal Z_{x,y}$ to $y\in \mathfrak D$ along some sequence $z_0,\cdots,z_k\in \mathbb D_{z,y}$ is:
\[
\prod_{i=0}^{k-1} \frac{q(z_{i},z_{i+1})}{\sum_{z\in \mathbb C_{z_{i}}^1}q(z_{i},z)}.
\]
Putting everything together, the expected jump rates are for all $x,y\in \mathfrak D$:
\[
q_{\infty}^{\infty}(x,y) = q(x,y) \mathbbm 1_{\alpha_{x,y,\infty}=1} + \sum_{z\in \mathcal Z_{x,y}} q(x,z) \sum_{(z,z_1,\cdots,z_k)\in \mathbb D_{z,y}} \prod_{i=0}^{k-1} \frac{q(z_{i},z_{i+1})}{\sum_{z\in \mathbb C_{z_{i}}^1}q(z_{i},z)}.
\]
We were however not able to show this result. First, the method used in the proof of Theorem~\ref{thm:soft-kill-1} fails in the case $n\rightarrow\infty$ as many estimates yield useless results in the limit of an infinite number of particles. Similarly, the proof of Theorem~\ref{thm:soft-kill-2} cannot deal with killing rates that are not of the same order of magnitude, as the proof of Lemma~\ref{lem:high-corr} requires $r\mapsto \sup_{x,y}|\lambda_r(x)-\lambda_r(y)|$ to be bounded.

\section{Proof of Theorem~\ref{thm:soft-kill-1}}\label{sec:general-rate}

We let Assumption~\ref{assu:assu1} hold in all this section. For all $x\in D$, write:
\begin{equation}\label{eq:normalised-coef}
    \alpha_r(x) = \frac{\lambda_r(x)}{\underline \lambda_r}.
\end{equation}
For all $x\in D$, $\alpha_r(x)\geqslant 1$, and we write:
\begin{equation}\label{eq:decompos_L}
L = L_m + \underline \lambda_r L_{s,0}^{r},
\end{equation}
where for $f \in \ell^\infty(\mathcal M^1_n(D))$ we denoted:
\begin{equation}\label{eq:Ls0}
L_{s,0}^{r}f(\xi) = \sum_{x,y\in D} \frac{n^2}{n-1} \xi(x)\alpha_r(x)\xi(y) \left( f\left(\xi +\frac{\delta_y - \delta_x}{n}\right) - f(\xi) \right) =\frac{1}{\underline \lambda_r}L_sf(\xi).
\end{equation}
For simplicity, we drop the dependence in $r$ of $L_{s,0}^{r}$ and we simply write $L_{s,0}$.

The proof of Proposition~\ref{thm:soft-kill-1} uses averaging methods~\cite[Chapter 16]{Pavliotis-Stuart}. Let us first present the overall idea. Recall the notation~\eqref{eq:pi} for the empirical measure. Denote for all $t \ge 0$ and  $z\in D$
\begin{equation}\label{eq:empiri-meas}
 \pi_t(z) =\pi(\X_t^r)(z),
\end{equation}
and let $u$ be the law of the empirical measure process $(\pi_t)_{t \ge 0}$:
\begin{equation}\label{eq:u}    
u(t,\xi) = \mathbb P\po \pi_t = \xi \pf,\qquad \forall \xi \in \mathcal M^1_n(D). 
\end{equation}
For all $t\geqslant 0$, $u(t,\cdot)\in \ell^1(\mathcal M^1_n(D))$, and we have the Kolmogorov equation:
\[
\partial_t u = L^*u, 
\]
where 
we recall that $L^*$ acts on $\ell^1(\mathcal M^1_n(D))$. 

Let us present a formal argument in order to identify the limiting dynamics. If we assume an expansion of the form 
\begin{equation*}\label{eq:Expansion}
u = u_0 + \frac{1}{\underline\lambda_r}u_1 + o\po \frac{1}{\underline\lambda_r} \pf,
\end{equation*}
with $\|u_0\|_{1}$ and $\|u_1\|_{1}$ bounded uniformly in $r\in \N$ and $t\geqslant 0$, then, using~\eqref{eq:decompos_L}, a formal identification at order $({\underline\lambda_r})^1$ yields that $u_0$ satisfies $$L_{s,0}^{*}u_0=0,$$ which shows that one expects the limiting dynamics to only take Dirac mass values. Indeed, for all $r\in\N$, $L_{s,0}$ is the generator of a Markov process that corresponds to a Fleming--Viot process  with no mutation mechanism and only selection dynamics. We will refer in the sequel to this dynamics as the selection dynamics. Under this dynamics, each particle of the Fleming--Viot process is killed at rate $\alpha_r$, and then branches uniformly at random over the surviving particles. Let $(\pi^0_t)_{t\geqslant 0}$ be a Markov process with this generator $L_{s,0}$. Every Dirac measure is an absorbing point of such dynamics, and thus 
${\rm Span} \po\{ \mathbbm 1_x, x \in D\} \pf \subset {\rm Ker} L^*_{s,0}$, where
\begin{equation}\label{eq:dirac-dirac}
\mathbbm 1_x = \delta_{\delta_x}
\end{equation}
is a probability measure on $\mathcal M^1_n(D)$, and can also be seen as an element of $\ell^1(\mathcal M^1_n(D))$.
Actually,  writing (remember the notation~\eqref{eq:Dirac_set})
\begin{equation}\label{eq:tau_urne}
\tau = \inf\left\{ t\geqslant 0, \pi^0_t \in \Delta \right\},
\end{equation}
we will show that $\tau <\infty$ almost surely (as an application of Lemma~\ref{lem:different-time-scale} below), and this will allow us to prove that, as a bounded operator from $\ell^1\po \mathcal M^1_n(D) \pf$ to itself, $L_{s,0}^*$ satisfies
\[
\overline{{\rm Span} \po\{ \mathbbm 1_x, x \in D\} \pf} = {\rm Ker} L^*_{s,0},
\]
see Lemma~\ref{lem:topology}.

We will then postulate the limiting dynamics on $u_0$ using our intuition on how Dirac masses evolve. Let us recall the intuition to derive these limiting dynamics: if the Dirac mass is at $x\in D$, at rate $nq(x,y)$ one particle will go from $x$ to $y$. Since the selection dynamics is on a much faster time scale, we only have to consider this part of the dynamics, and the probability that the Dirac mass goes from $x$ to $y$ is given by the probability $p_{x,y}$ of a Gambler's ruin, see Lemma~\ref{lem:game}. Therefore, the rates at which the Dirac mass will move from $x$ to $y$ is simply $nq(x,y)p_{x,y}$.

To formalize this reasoning, after having introduced $u_0$ following the expected limiting dynamics described above, we will define $u_1$ as a solution to the Poisson equation
\begin{equation}\label{eq:Poisson}
L_{s,0}^* u_1 = \partial_t u_0 - L_m^*u_0
\end{equation}
which is the equation formally obtained at order $({\underline\lambda_r})^0$. The proof will then consist in proving that 
\[
u - u_0 - \frac{1}{\underline\lambda_r}u_1
\]
goes to $0$ as $r\rightarrow \infty$.

The remaining of this section is divided into four parts. In Section~\ref{sec:time-scale}, we first present a general time-scale separation results, which will be used in Section~\ref{sec:IC} to provide a precise description of the early stage of the dynamics, when $\pi^0_t$ reaches $\Delta$. Then, in Section~\ref{sec:poisson}, we will provide results on the well-posedness of Equation~\eqref{eq:Poisson}. We will then be in position to prove~Theorem~\ref{thm:soft-kill-1} in Section~\ref{sec:ProofT1}.

\subsection{A time-scale separation result}\label{sec:time-scale}

In the selection/mutation mechanism, there exists a cascade of time-scales. The largest time scale is the mutation one, which is independent of $r\in\N$. Besides, we may define an equivalence relation on $D$ by $x\sim y$ if and only if $0<\alpha_{x,y,\infty}<\infty$. Then, each equivalence class associated with this equivalence relation gives a different time scale, as, if $\alpha_{x,y,\infty}=\infty$, then all particles in $y$ will jump away from $y$ to go to a site with a lower death rate before any particle in $x$ have time to jump or move, with probability going to $1$ as $r$ goes to infinity. Lemma~\ref{lem:different-time-scale} below describes this phenomenon, and is a cornerstone in the proofs of the results under Assumption~\ref{assu:assu1}. In particular, if at a given time, the empirical process is not a Dirac mass, it will give us information on the time taken by the process to reach the set of Dirac measures and on how the particles will be redistributed to form a Dirac mass.

\begin{lem}\label{lem:different-time-scale}
Let $a_r>0$, $b_r \geqslant 0$,  $L_r = a_r L_r^1 + b_r L_r^2$ be a jump Markov process generator on the state space $\mathcal M_n^1(D)$, such that $L^1_r$ is a selection dynamics with partial rates: 
\[
\forall f \in \ell^\infty(\mathcal M^1_n(D)), \, L_r^{1}f(\eta) = \sum_{x\in \Theta,y\in D} \frac{n^2}{n-1} \eta(x)\gamma_r(x)\eta(y) \left( f\left(\eta +\frac{\delta_y - \delta_x}{n}\right) - f(\eta) \right),
\]
for some $\Theta\subset D$ and $\gamma_r\geqslant 1$, and such that the coefficients of $L^2_r$ are bounded from above in $r$:  
\[
\forall f \in \ell^\infty(\mathcal M^1_n(D)), \, L^2_rf(\eta) = \sum_{\xi\in \mathcal M^1_n(D)}\beta^2_r(\eta,\xi) \po f(\xi)-f(\eta) \pf, \qquad\sum_{\xi\in \mathcal M^1_n(D)}\beta^2_r(\eta,\xi) \leqslant \bar \mu,\] for some $\bar \mu >0$ and all $\eta \in \mathcal M^1_n(D)$.
Let us assume that
\[
\lim_{r\rightarrow \infty} \frac{b_r}{a_r}  = 0,
\]
and let us introduce $c_r>0$ such that 
\[
\lim_{r\rightarrow \infty} \frac{c_r}{a_r} = \lim_{r \rightarrow \infty} \frac{b_r}{c_r} = 0.
\]
Write
\[
\mathbb M^1= \left\{ \xi\in \mathcal M_n^1(D), L_r^1 f(\xi) = 0,\, \forall f \in \ell^\infty(\mathcal M^1_n(D)),\, \forall r\in \N \right\},
\]
and
\[
\tau_1^r = \inf\left\{t\geqslant 0, \tilde\pi^r_t \in \mathbb M^1\right\},
\]
where $\tilde \pi^r$ is a Markov process with generator $L_r$. Denote by $S_1$ the first jump of the process coming from the $b_r L_r^2$ part of the generator.
Then the following holds:
\begin{equation}\label{eq:time-scale}
\lim_{r\rightarrow\infty} \inf_{\xi \in\M^1_n(D)}\mathbb P_{\xi}\po \tau_1^r < c_r^{-1} < S_1 \pf = 1.
\end{equation}
Moreover, if $L^2_r = 0$, then there exist $C,c>0$ such that
\begin{equation}\label{eq:bound-death-time}
\sup_{r\in \N}\sup_{\xi\in \mathcal M^1_n(D),} \mathbb P_{\xi}\po \tau_1^r \geqslant t \pf \leqslant  Ce^{-c a_rt}.
\end{equation}
\end{lem}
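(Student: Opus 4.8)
The plan is to analyse the two assertions separately, starting with the simpler exponential bound~\eqref{eq:bound-death-time} in the case $L_r^2 = 0$, and then bootstrapping the time-scale separation~\eqref{eq:time-scale} from it. For~\eqref{eq:bound-death-time}, the key observation is that the selection dynamics $L_r^1$ with $\gamma_r \geqslant 1$ absorbs at $\mathbb M^1$ at least as fast as the same dynamics with all rates replaced by $1$; more precisely, since $\gamma_r(x) \geqslant 1$ for every $x$, the absorption time $\tau_1^r$ is stochastically dominated by the absorption time $\tau_1$ of the selection dynamics with unit rates, time-changed by a factor $a_r$. It therefore suffices to prove that for the unit-rate selection dynamics on $\mathcal M^1_n(D)$ there exist $C,c > 0$ with $\sup_\xi \mathbb P_\xi(\tau_1 \geqslant t) \leqslant C e^{-ct}$. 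This is a standard fact: from any non-absorbing configuration $\xi \notin \Delta$ (note $\mathbb M^1 = \Delta$ here, since with $\Theta = D$ and $\gamma_r \geqslant 1 > 0$, the only configurations killed by $L_r^1$ are the Dirac masses) there is a uniformly positive probability, over a time interval of length $1$, of reaching $\Delta$ — one exhibits an explicit sequence of at most $n-1$ killing/duplication events moving all particles onto a single site, each occurring with rate bounded below. A Markov-chain argument (splitting $[0,t]$ into unit intervals and using the strong Markov property) then yields the geometric tail, and the prefactor $a_r$ in the exponent comes from undoing the time change.

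For the time-scale separation~\eqref{eq:time-scale}, I would decompose the event into three pieces: $\{S_1 > c_r^{-1}\}$, $\{\tau_1^r < c_r^{-1}\}$, and the absence of an $L_r^2$-jump before absorption, and show each has probability tending to $1$ uniformly in the starting point $\xi$. The first is immediate from the assumption $\sup_\eta \sum_\xi \beta^2_r(\eta,\xi) \leqslant \bar\mu$: the first $b_r L_r^2$-jump time $S_1$ stochastically dominates an exponential variable of parameter $b_r \bar\mu$, so $\mathbb P_\xi(S_1 \leqslant c_r^{-1}) \leqslant 1 - e^{-b_r\bar\mu/c_r} \to 0$ since $b_r/c_r \to 0$. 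For the second piece, up to time $S_1$ the process $\tilde\pi^r$ coincides in law with a pure selection process with generator $a_r L_r^1$, whose absorption time satisfies~\eqref{eq:bound-death-time} with the rate $a_r$; hence $\mathbb P_\xi(\tau_1^r \geqslant c_r^{-1}, \, \tau_1^r < S_1) \leqslant \mathbb P(\text{pure selection not absorbed by } c_r^{-1}) \leqslant C e^{-c a_r / c_r} \to 0$ since $a_r/c_r \to \infty$. Combining, on the complement of a set of vanishing probability we have simultaneously $\tau_1^r < c_r^{-1}$ and $S_1 > c_r^{-1}$, which forces $\tau_1^r < c_r^{-1} < S_1$; taking the infimum over $\xi$ and passing to the limit gives~\eqref{eq:time-scale}.

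The main obstacle is the lower bound on the one-step absorption probability for the pure selection dynamics that underlies~\eqref{eq:bound-death-time}, and in particular making it uniform in $n$ and in the starting configuration — one must check that the explicit "funnelling" sequence of killing events can always be realised and that the associated probability over a bounded time horizon is bounded below independently of $\xi \in \mathcal M^1_n(D)$. Here the fact that a killed particle re-appears at a uniformly chosen surviving particle matters: one should always kill a particle at a minority site and duplicate it onto a chosen target site, strictly decreasing the number of occupied sites; iterating at most $n-1$ times empties all but one site. The bounds $\gamma_r \geqslant 1$ and $\Theta = D$ (so that every non-Dirac configuration has at least one particle that can be killed) are exactly what make this argument go through, and one must be slightly careful that the $1/(n-1)$ sampling factor and the $n^2/(n-1)$ prefactor in $L_r^1$ do not destroy the uniform lower bound — they do not, since for fixed configuration size these are bounded below by positive constants depending only on $n$, and the statement only claims existence of constants $C,c$ (allowed to depend on $n$, which is fixed throughout this section).
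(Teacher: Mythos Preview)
Your overall strategy is sound and matches the paper's: prove a uniform one-step absorption bound for the selection dynamics, iterate via the Markov property to get the exponential tail~\eqref{eq:bound-death-time}, and then combine this with the elementary exponential lower bound on $S_1$ to obtain~\eqref{eq:time-scale}. The treatment of~\eqref{eq:time-scale} is essentially identical to the paper's.

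There are, however, two genuine gaps in the argument for~\eqref{eq:bound-death-time}.

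First, the stochastic domination claim --- that the absorption time for the $\gamma_r$-dynamics is dominated by that of the unit-rate dynamics --- is asserted but not proved, and it is not obvious. Increasing $\gamma_r$ at some sites alters the embedded chain, not merely the holding times, so there is no simple time-change or coupling that immediately gives the comparison. The paper sidesteps this entirely: it proves the uniform one-step bound $\inf_{r,\xi}\mathbb P_\xi(\bar\tau_1^r < 1) \geqslant 1-\omega$ \emph{directly} for arbitrary $\gamma_r \geqslant 1$, by exhibiting an explicit event $\mathcal A_r$ on the interval $[0,1/\underline\gamma_r]\subset[0,1]$ in which the particles with non-minimal rate each die once and branch onto a fixed target particle with minimal rate (or in $D_0\setminus\Theta$). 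The probability of $\mathcal A_r$ is bounded below by an expression depending only on $n$, uniformly in $\gamma_r$ and $\xi$. This is no harder than the unit-rate case and renders the domination step unnecessary.

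Second, you assume $\Theta = D$ and hence $\mathbb M^1 = \Delta$, but the lemma is stated for general $\Theta \subset D$, in which case $\mathbb M^1$ also contains all configurations supported in $D\setminus\Theta$. This matters: the lemma is applied later in the paper (e.g.\ in the proof of Lemma~\ref{lem:initial-condi}) with $\Theta = D_0\setminus\Lambda \subsetneq D$. The paper's construction of the event $\mathcal A_r$ handles this by choosing the target particle $X^{i_0}$ in $D_0\setminus\Theta$ when that set is nonempty, so that $\mathcal A_r$ drives the configuration into $\mathbb M^1$ (not necessarily into $\Delta$). Your ``funnelling'' argument needs the same modification.
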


This lemma will be applied to the Fleming--Viot generator~\eqref{eq:decompos_L}, either with $\Theta=D$ or with $\Theta$ a subset of $D$ such that the killing rates on $\Theta$ are much greater then the ones on $D\setminus \Theta$, in the sense that for all $x\in \Theta$, $y\notin \Theta$, $\alpha_{x,y,\infty} = 0$. In the latter case, the set $\mathbb M^1$ contains the Dirac measures, as well as the empirical measures of $n$ particles with support in $D\setminus \Theta$.

\begin{proof}
Let $\bar \pi$ be a Markov process with generator $L^1_r$, defined using the same jump events as $\tilde \pi$, and
\[
\bar \tau_1^r = \inf\left\{t\geqslant 0, \bar\pi^r_t \in\mathbb M^1\right\}.
\]
On the event $\tau_1^r<S_1$, we have $\tau_1^r = a_r^{-1}\bar \tau_1^r$. Let us show that there exist $C,c>0$ such that
\begin{equation}\label{eq:bound-death-time-normalized}
\sup_{r\in \N}\sup_{\xi\in \mathcal M^1_n(D),} \mathbb P_{\xi}\po \bar\tau_1^r \geqslant t \pf \leqslant Ce^{-c t}.
\end{equation}
Fix $\xi\in \mathcal M^1_n(D)$, write $D_0 = {\rm supp}(\xi)$ and
\[
\underline \gamma_r = \min \left\{ \gamma_r(x), x\in D_0 \cap \Theta \right\}.
\]
Since $\gamma_r(x)\geqslant 1$ for all $x\in D$, $\underline \gamma_r \geqslant 1$ for all $r\in\N$. Let $\underline x_r \in D$ be such that $\underline \gamma_r= \gamma_r( \underline x_r)$ if $D_0\subset \Theta$, and let $\underline x_r \in D_0\setminus \Theta$ otherwise. Let $\X$ be a process whose empirical measure is given by $\bar \pi$, and let $i_0\in \llbracket 1,n\rrbracket$ be such that $X^{i_0}_0 = \underline x_r$. Denote
\[
I = \left\{i,\, X^i_0 \in D_0\setminus\Theta \right\}\cup \left\{i_0\right\},\quad n_0 = \left|I\setminus \left\{i_0\right\}\right|,
\]
and write:
\[
\mathcal A_r = \left\{ \text{On }[0,1/\underline \gamma_r], X^{i},\, i\in I, \text{ do not die, and all other die once and branch on } X^{i_0} \right\}.
\]
We have that $\mathcal A_r \subset \left\{ \bar \tau_1^r < 1\right\}$. Since the number of death events for $(X^i_s)_{0 \leqslant s \leqslant 1/\underline \gamma_r}$  is upper (resp. lower) bounded by a Poisson random variable with parameter $\underline \gamma_r$ if $i\in I$ (resp. $i\notin I$), we have that:
\[
\mathbb P_{\xi}\left(  \mathcal A_r\right) \geqslant \frac{e^{-n}\po 1-e^{-1}\pf^{n-n_0}}{n^{n-n_0}},
\]
and hence, we get that 
\[
\inf_{r\in\N }\inf_{\xi\in \mathcal M^1_n(D)} \mathbb P_{\xi}\po \bar \tau_1^r <1 \pf \geqslant  1-\omega,\]
for some $\omega \in [0,1)$. Using the Markov property at time $t-1$, this implies that for all $r\in \N$, $\xi\in \mathcal M^1_n(D)$, and $t\geqslant 1$:
\[
\mathbb P_{\xi}\po \bar \tau_1^r \geqslant t \pf  \leqslant \omega \mathbb P_{\xi}\po \bar \tau_1^r \geqslant t-1 \pf, 
\]
and this implies 
\[\sup_{r\in\N}\sup_{\xi\in \mathcal M^1_n(D)} \mathbb P_{\xi}\po \bar \tau_1^r > t \pf \leqslant \omega^{\lfloor t \rfloor} \leqslant Ce^{-c t},
\]
with $c = -\ln(\omega)>0$ and $C=\omega^{-1}$.

Writing $\beta^2_r(\eta)=\sum_{\xi\in\mathcal M^1_n(D)}\beta^2_r(\eta,\xi)$, $S_1$ is defined as:
\[
S_1 = \inf\left\{ t\geqslant 0, E_1 \leqslant \int_0^t b_r\beta^2_r(\tilde \pi_s^r) \dd s \right\} \geqslant \inf\left\{ t\geqslant 0, E_1 \leqslant b_r\bar \mu t \right\} = b_r^{-1}\tilde S_1,
\]
where $E_1$ is an exponential random variable with parameter $1$. Hence, $S_1$ is bounded from below by $b_r^{-1}\tilde S_1$, where $\tilde S_1$ is an exponential random variable with parameter $\bar\mu$. Finally, using the bound~\eqref{eq:bound-death-time-normalized} we have:
\begin{align*}
    \mathbb P\po \tau_1^r < c_r^{-1} < S_1 \pf &= \mathbb P\po a_r^{-1}\bar \tau_1^r < c_r^{-1} < S_1 \pf \\ &\geqslant 1 - \mathbb P\po \bar \tau_1^r \geqslant \frac{a_r}{c_r} \pf - \mathbb P\po   S_1 < c_r^{-1}\pf \\ &\geqslant 1 - \mathbb P\po \bar \tau_1^r \geqslant \frac{a_r}{c_r} \pf - \mathbb P\po  \tilde S_1 < \frac{b_r}{c_r}\pf \underset{r\rightarrow\infty}{\rightarrow} 1,
\end{align*}
and hence the limit~\eqref{eq:time-scale}. In the case where $L^2_r=0$, Equation~\eqref{eq:bound-death-time-normalized} yields:
\[
\mathbb P_{\xi}\po \tau_1^r \geqslant t \pf = \mathbb P_{\xi}\po \bar\tau_1^r \geqslant a_rt \pf\leqslant Ce^{-ca_rt},
\]
which concludes the proof.
\end{proof}

\subsection{The initial condition}~\label{sec:IC}

Now, we aim to address the issue of the initial condition. Given some initial condition $\eta \in \mathcal M^1_n(D)$, write $D_0 = {\rm supp}(\eta)$, where
\[
{\rm supp}(\eta) = \left\{ x\in D, \eta(x)>0 \right\},
\] 
and 
\begin{equation}\label{eq:minimal-death-rates}
\Lambda = \left\{ x\in D_0,\, \min_{y\in D_0} \alpha_{x,y,\infty} >0 \right\}.
\end{equation}
This set $\Lambda$ correspond to the subset of $D_0$ where, asymptotically in $r$, the order of magnitude of the killing rate is minimal. Thanks to Lemma~\ref{lem:different-time-scale}, we will be able to show that in the limit~\eqref{eq:context}, all particles from $D_0\setminus\Lambda$ will be redistributed instantaneously (ie on a time scale faster than $1/\inf_{D_0} \lambda_r$) in $\Lambda$, with no movement from the particles in $\Lambda$. Moreover, the law of the configuration obtained in $\Lambda$ after this instantaneous redistribution is given by a Polya's urn problem, with $|\Lambda|$ possible colors and initial condition $n\eta|_{\Lambda}$, after $n\eta(D_0\setminus\Lambda)$ draws.
On the time scale $1/\inf_{D_0} \lambda_r$, this measure will then form a Dirac mass according to the probability given by commitor functions, that we are now going to define.

Recall that $\pi^0$ denotes a Markov process with generator $L_{s,0}$, and the definition~\eqref{eq:tau_urne} of $\tau$. Let us apply Lemma~\ref{lem:different-time-scale}, with $L^2_r=0$, $a_r=1$, $\Theta = {\rm supp}(\pi^0_0)$,  $\gamma_r = \alpha_r$, so that $L^1_r=L_{s,0}$  and $\tau^1_r=\tau$.
Equation~\eqref{eq:bound-death-time} then yields that $\tau <\infty$ almost surely. Denote for $\eta\in\mathcal M^1_n(D)$:
\begin{equation}\label{eq:commitor}
\psi_x^r(\eta) = \mathbb P_{\eta}\po \pi^0_{\tau} = \delta_x \pf.
\end{equation}
The functions $\psi_x^r$ are the so called committor functions, and \cite[Theorem 3.3.1.]{Norris} yields that they satisfy:
\begin{equation}\label{eq:commitore}
  \psi_x^r(\delta_x) = 1,\, \psi_x^r(\delta_y) = 0\, \forall y\neq x,\, L_{s,0}\psi_x^r(\xi) = 0,\, \forall \xi\in\mathcal M^1_n(D)\setminus \Delta.  
\end{equation}
Remark that for all $\xi\in \Delta$, $f\in \ell^{\infty}\po \mathcal M^1_n(D)\pf$, $L_{s,0}f(\xi) = 0$, so that we actually have $L_{s,0}\psi_x^r(\xi) = 0$, for all $\xi\in\mathcal M^1_n(D)$. The next lemma proves that the committor functions are the unique solution to~\eqref{eq:commitore} in~$\ell^\infty \po \mathcal M_n^1(D) \pf$.

\begin{lem}\label{lem:commitor}
Let $\Theta\subset D$ and $\gamma: \Theta \to [1,+\infty)$.
Let $L^1$ be defined by: $\forall  f \in \ell^{\infty}\po \mathcal M^1_n(\Theta) \pf$,
\begin{equation*}
L^{1}f(\eta) = \sum_{x,y\in \Theta} \frac{n^2}{n-1} \eta(x)\gamma(x)\eta(y) \left( f\left(\eta +\frac{\delta_y - \delta_x}{n}\right) - f(\eta) \right).
\end{equation*}
For all $x\in \Theta$, if $f\in \ell^{\infty}\po \mathcal M^1_n(\Theta) \pf$ satisfies
\begin{equation}\label{eq:Laplace}
f(\delta_x) = 1,\, f(\delta_y) = 0\, \forall y\neq x,\, L^1f(\xi) = 0,\, \forall \xi\in\mathcal M^1_n(\Theta)\setminus\Delta,
\end{equation}
then for all $\xi \in\mathcal M_n^1(\Theta)$
\[
f(\xi)= \mathbb P_{\xi} \po \pi^1_{\tau_1} = \delta_x \pf,
\]
where $\tau_1 = \inf\left\{ t\geqslant 0, \pi^1_t \in \Delta \right\}$ and $\pi^1$ is a Markov process with generator $L^1$ and values in $\mathcal M^1_n(\Theta)$. 

In particular, $\psi_x^r$ defined by~\eqref{eq:commitor} are the unique solution to~\eqref{eq:commitore} in $\ell^{\infty}\po \mathcal M^1_n(D) \pf$.
\end{lem}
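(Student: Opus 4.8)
The plan is to recognize that a function $f$ satisfying~\eqref{eq:Laplace} is $L^1$-harmonic away from $\Delta$ and takes the prescribed values on $\Delta$, and then to identify it with the hitting probability $\mathbb P_{\cdot}(\pi^1_{\tau_1} = \delta_x)$ through an optional stopping argument. Concretely, I would fix $x \in \Theta$ and $f \in \ell^{\infty}(\mathcal M^1_n(\Theta))$ satisfying~\eqref{eq:Laplace}, fix $\xi \in \mathcal M^1_n(\Theta)$, and let $\pi^1$ be the Markov process with generator $L^1$ started at $\xi$. Since under the selection dynamics a particle can only branch onto an already occupied site, $\pi^1$ stays forever in the finite set $\mathcal M^1_n(\mathrm{supp}(\xi))$, so the jump rates seen by $\pi^1$ are bounded and all manipulations below are legitimate even when $D$, hence $\mathcal M^1_n(D)$, is countably infinite.

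The key steps are then as follows. First I would show $\tau_1 < \infty$ almost surely: because $\gamma \geqslant 1$, the only configurations $\eta$ with $L^1 g(\eta) = 0$ for all $g \in \ell^{\infty}(\mathcal M^1_n(\Theta))$ are the Dirac masses, so that $\mathbb M^1 = \Delta$ in the notation of Lemma~\ref{lem:different-time-scale}; applying that lemma with $L^2_r = 0$, $a_r = 1$, $\gamma_r = \gamma$ and ambient space $\Theta$, the bound~\eqref{eq:bound-death-time} yields $C,c>0$ with $\sup_{\xi}\mathbb P_{\xi}(\tau_1 \geqslant t) \leqslant Ce^{-ct}$, hence $\tau_1 < \infty$ a.s. Second, I would observe that $L^1 f \equiv 0$ on all of $\mathcal M^1_n(\Theta)$: this holds on $\mathcal M^1_n(\Theta) \setminus \Delta$ by~\eqref{eq:Laplace}, and on $\Delta$ trivially since Dirac masses are absorbing for $L^1$. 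By Dynkin's formula $(f(\pi^1_t))_{t \geqslant 0}$ is therefore a martingale, bounded because $f \in \ell^{\infty}$; optional stopping at the bounded times $t \wedge \tau_1$ gives $f(\xi) = \E_{\xi}[f(\pi^1_{t \wedge \tau_1})]$ for every $t$, and letting $t \to \infty$ (using $\tau_1 < \infty$ a.s., the fact that $\pi^1$ is constant after $\tau_1$, and bounded convergence) gives $f(\xi) = \E_{\xi}[f(\pi^1_{\tau_1})]$. Since $\pi^1_{\tau_1} = \delta_Z$ for some random $Z \in \Theta$ and the boundary conditions in~\eqref{eq:Laplace} give $f(\delta_Z) = \mathbbm 1_{Z = x}$, this reads $f(\xi) = \mathbb P_{\xi}(\pi^1_{\tau_1} = \delta_x)$, which is the claim. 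The "in particular" statement then follows by specializing to $\Theta = D$ and $\gamma = \alpha_r$, so that $L^1 = L_{s,0}$, $\pi^1 = \pi^0$ and $\tau_1 = \tau$: equations~\eqref{eq:Laplace} and~\eqref{eq:commitore} coincide, hence any $f \in \ell^{\infty}(\mathcal M^1_n(D))$ solving~\eqref{eq:commitore} must equal $\mathbb P_{\xi}(\pi^0_{\tau} = \delta_x) = \psi_x^r(\xi)$, and since $\psi_x^r$ itself solves~\eqref{eq:commitore} (as recalled just before the statement of the lemma), it is the unique solution.

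The argument is essentially the classical probabilistic representation of a committor function, so I do not anticipate a serious obstacle; the only points requiring genuine care are the reduction to $\tau_1 < \infty$ a.s. via Lemma~\ref{lem:different-time-scale} — in particular the identification $\mathbb M^1 = \Delta$, which uses $\gamma \geqslant 1$ — and the remark that, started from $\xi$, the process never leaves the finite set $\mathcal M^1_n(\mathrm{supp}(\xi))$, which is what keeps the martingale and optional stopping steps elementary when the state space is infinite.
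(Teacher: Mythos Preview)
Your proposal is correct and follows essentially the same approach as the paper's proof: both argue that $t\mapsto f(\pi^1_{\tau_1\wedge t})$ is a bounded martingale, invoke Lemma~\ref{lem:different-time-scale} (with $L^2_r=0$) to get $\tau_1<\infty$ a.s., and pass to the limit by bounded convergence to obtain $f(\xi)=\mathbb P_\xi(\pi^1_{\tau_1}=\delta_x)$. Your version is somewhat more explicit about the auxiliary points (the reduction to the finite set $\mathcal M^1_n(\mathrm{supp}(\xi))$, the identification $\mathbb M^1=\Delta$ via $\gamma\geqslant 1$, and the fact that $L^1f$ vanishes on $\Delta$), but the substance is identical.
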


\begin{proof}
Let $f$ be a solution to Equation~\eqref{eq:Laplace}. Let $\xi\in \mathcal M^1_n(\Theta)$ and $\pi^1$ be a Markov process with generator $L^1$ and initial condition $\xi$. The map $t\mapsto f(\pi^1_{\tau_1\wedge t})$ is a martingale, and hence for all $t\geqslant 0$, 
\begin{equation}\label{eq:martingal}
\E_{\xi}\po f(\pi^1_{\tau_1\wedge t}) \pf = f(\xi).
\end{equation}
Equation~\eqref{eq:bound-death-time} from Lemma~\ref{lem:different-time-scale} yields that $\tau_1<\infty$ almost surely, and since $f$ is bounded, letting $t$ go to infinity in~\eqref{eq:martingal} yields that $f(\xi) = \mathbb P_{\xi} \po \pi^1_{\tau_1} = \delta_x \pf$.
\end{proof}

We are now able to prove

\begin{lem}\label{lem:initial-condi}
For all $\eta \in \mathcal M^1_n(D)$, there exists $\eta_{\infty} \in \mathcal M^1(D)$ such that if $\pi^0$ is a Markov process with generator $L_{s,0}$ and initial condition $\eta$, and $Y_0$ is a random variable with law $\eta_{\infty}$, then
\[
\lim_{r\rightarrow\infty}\|\mathcal Law(\pi_{\tau}^0)- \mathcal Law(\delta_{Y_0})\|_{TV}=0,
\]
where, we recall, $L_{s,0}$ defined by~\eqref{eq:Ls0} implicitly depends on $r$, and $\tau$ is defined by~\eqref{eq:tau_urne}.
\end{lem}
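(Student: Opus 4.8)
The plan is to reduce the statement to the convergence, for each $x\in D$, of the committor value $\psi_x^r(\eta)=\mathbb{P}_\eta(\pi^0_\tau=\delta_x)$ from~\eqref{eq:commitor}. Indeed, $\mathcal{L}aw(\pi^0_\tau)$ is the probability measure carrying mass $\psi_x^r(\eta)$ on each $\delta_x$; it is supported by the finite set $\{\delta_x:x\in D_0\}$, where $D_0={\rm supp}(\eta)$, so if $\psi_x^r(\eta)\to\eta_\infty(x)$ for every $x\in D_0$ with $\sum_x\eta_\infty(x)=1$, then $\|\mathcal{L}aw(\pi^0_\tau)-\mathcal{L}aw(\delta_{Y_0})\|_{TV}=\sum_{x\in D_0}|\psi_x^r(\eta)-\eta_\infty(x)|\to 0$ for $Y_0\sim\eta_\infty$. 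I would first record two facts about the set $\Lambda$ of~\eqref{eq:minimal-death-rates}: the relation $x\preceq y$ defined by $\alpha_{x,y,\infty}>0$ is a total preorder on $D_0$ (since $\alpha_{y,x,r}=1/\alpha_{x,y,r}$, at least one of $\alpha_{x,y,\infty},\alpha_{y,x,\infty}$ is positive), so $\Lambda$ is precisely its minimal equivalence class, nonempty since $D_0$ is finite; consequently $\alpha_{y,y',\infty}\in(0,\infty)$ for all $y,y'\in\Lambda$, while $\alpha_{x,y,\infty}=0$ whenever $x\in D_0\setminus\Lambda$ and $y\in\Lambda$. In particular $\M^1_n(\Lambda)$ is left invariant by $L_{s,0}$, and the restriction of $\psi_x^r$ to $\M^1_n(\Lambda)$ is the committor of $L_{s,0}|_{\M^1_n(\Lambda)}$ (equal to $0$ if $x\notin\Lambda$).

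Next I would isolate the fast redistribution of the particles lying outside $\Lambda$. Set $\Theta=D_0\setminus\Lambda$; if $\Theta=\emptyset$ the claim is immediate, so assume $\Theta\neq\emptyset$ and decompose $L_{s,0}=a_rL_r^1+b_rL_r^2$ in the form of Lemma~\ref{lem:different-time-scale}, with $L_r^1$ the selection dynamics killing only at the sites of $\Theta$ and $a_r=\min_{x\in\Theta}\alpha_r(x)$ (so that the rates $\gamma_r(x)=\alpha_r(x)/a_r$ are $\geqslant 1$ on $\Theta$), and $b_rL_r^2$ the selection dynamics killing only at the sites of $\Lambda$ with $b_r=\max_{y\in\Lambda}\alpha_r(y)$. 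Since $n$ is fixed, the coefficients of $L_r^2$ are bounded uniformly in $r$, and $b_r/a_r=\max_\Lambda\lambda_r/\min_\Theta\lambda_r\to 0$ by the rate separation above, so the lemma applies. Here $\mathbb{M}^1$ consists, among the states reachable from $\eta$, of the configurations supported by $\Lambda$, and since at least one particle always sits in $\Lambda$ under $L_r^1$, the process starting from $\eta$ hits $\mathbb{M}^1$ at a time $\tau_1^r\leqslant\tau$ at a configuration supported by $\Lambda$. On the event $\{\tau_1^r<c_r^{-1}<S_1\}$, whose probability tends to $1$, no particle of $\Lambda$ has moved before $\tau_1^r$, so the restriction of $\pi^0_{\tau_1^r}$ to $\Lambda$ is obtained from $n\eta|_\Lambda$ by adding, one after the other, the $n\eta(\Theta)$ particles initially in $\Theta$ as they successively branch onto a particle of $\Lambda$. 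The key observation is that, conditionally on such a branching onto $\Lambda$, the incoming particle lands on a site chosen uniformly among the particles currently in $\Lambda$, that is, proportionally to the current $\Lambda$-counts, and that exactly $n\eta(\Theta)$ such events occur; moreover this law does not depend on $r$, nor on which particle of $\Theta$ branches, nor on the configuration on $\Theta$. Coupling $\pi^0$ with the process generated by $L_r^1$ alone (for which the particles of $\Lambda$ are frozen) up to the first jump $S_1$ coming from $b_rL_r^2$, I would deduce that on the above event $\pi^0_{\tau_1^r}$ coincides, as an element of $\M^1_n(\Lambda)$, with a random configuration $\Xi$ whose law is the Polya urn started from $n\eta|_\Lambda$ after $n\eta(\Theta)$ draws, a law that does not depend on $r$.

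It then remains to pass to the limit in $\psi_x^r(\eta)=\E_\eta[\psi_x^r(\pi^0_{\tau_1^r})]$ (strong Markov property at $\tau_1^r\leqslant\tau$). Splitting along the event above, bounding $\psi_x^r\leqslant 1$ on its vanishing-probability complement, and using the coupling, I get $\psi_x^r(\eta)=\E[\psi_x^r(\Xi)]+o(1)$, where $\psi_x^r$ now denotes the committor of $L_{s,0}|_{\M^1_n(\Lambda)}$. This committor equals that of the corresponding embedded jump chain, whose transition probabilities, after dividing all rates by $\alpha_r(z_0)$ for a fixed $z_0\in\Lambda$, depend continuously on the ratios $\alpha_{z_0,z,r}$ for $z\in\Lambda$, which converge in $(0,\infty)$ by Assumption~\ref{assu:assu1}. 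Since absorption in $\Delta$ is certain for the limiting jump chain (from any configuration, resampling all particles onto a single site has positive probability), the committor is, by Lemma~\ref{lem:commitor}, the unique solution of~\eqref{eq:commitore} for the limiting rates as well, so $\psi_x^r(\xi)\to\psi_x^\infty(\xi)$ for every $\xi\in\M^1_n(\Lambda)$, where $\psi_x^\infty$ is the committor of the limiting selection dynamics on $\M^1_n(\Lambda)$. Hence $\psi_x^r(\eta)\to\eta_\infty(x):=\E[\psi_x^\infty(\Xi)]$; this defines $\eta_\infty\in\M^1(\Lambda)\subseteq\M^1(D)$, which sums to $1$ because $\sum_x\psi_x^\infty\equiv 1$, and finishes the proof.

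I expect the main obstacle to be the identification, in the second step, of the law of $\pi^0_{\tau_1^r}|_\Lambda$ as an $r$-independent Polya urn. This requires care because $\Theta$ may itself carry several scales of killing rates, so that the particles of $\Theta$ leave $\Theta$ in a complicated, $r$-dependent order and may branch several times inside $\Theta$ before exiting; the point is that the exchangeability of the Polya urn makes only the number of exit events and the conditional law of a single exit matter, both of which are insensitive to the order of exits and to $r$. The remaining ingredients — the time-scale separation of Lemma~\ref{lem:different-time-scale}, the coupling, the strong Markov property, and the continuity of committors — are routine.
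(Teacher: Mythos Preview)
Your proof is correct and follows essentially the same route as the paper: split $D_0$ into the minimal class $\Lambda$ and its complement $\Theta$, use the time-scale separation Lemma~\ref{lem:different-time-scale} to show that on a high-probability event the $\Theta$-particles are absorbed into $\Lambda$ before any $\Lambda$-particle moves, identify the resulting $\Lambda$-configuration as an $r$-independent Polya urn, and then pass to the limit in the committors on the finite set $\mathcal M^1_n(\Lambda)$ via Lemma~\ref{lem:commitor}. The only cosmetic differences are that you split $L_{s,0}=a_rL_r^1+b_rL_r^2$ with $b_rL_r^2$ the $\Lambda$-only selection (the paper takes the full normalised generator for $L_r^2$, which is slightly wasteful but harmless), and you phrase the committor convergence via continuity of the embedded chain rather than the paper's subsequential-limit-plus-uniqueness argument; both are equivalent here.
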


In the statement of Theorem~\ref{thm:soft-kill-1}, $\eta_{\infty}$ is the law of the initial condition $Y_0$.

\begin{proof}
Fix $\eta\in\mathcal M^1_n(D)$ and write $\bar \eta_{r} = \mathcal Law(\pi_{\tau}^0)\in\mathcal M^1\po \Delta \pf$. Denote $D_0(\eta) = {\rm supp}(\eta)$ and
\[
\underline \alpha_r = \inf_{x\in D_0}\alpha_r(x),
\] 
where $\alpha_r(x)$ is defined in~\eqref{eq:normalised-coef}. Recall the definition of $\Lambda$ from~\eqref{eq:minimal-death-rates}. The proof is divided into two parts. If $\Lambda = D_0$, then we can find the limit of $\bar\eta_{r}$ when $r\rightarrow\infty$ using the fact that $\bar\eta_{r}(\delta_x) = \psi_x^r(\eta)$ and that $(\psi_x^r(\eta))_{\eta\in\mathcal M^1_n(\Lambda)}$ is the unique solution to a Dirichlet equation on a finite set (see Lemma~\ref{lem:commitor}). If $\Lambda \neq D_0$, then using Lemma~\ref{lem:different-time-scale}, we show that all particles from $D_0\setminus\Lambda$ will first be distributed on $\Lambda$, without any particles from $\Lambda$ moving, and then we can apply the argument from the case $D_0 = \Lambda$.

Let us suppose that $\Lambda = D_0$. Write $L^{D_0,r}_{s,0}$ the re-normalised generator of the selection dynamics 
\[
L_{s,0}^{D_0,r}f(\xi) = \sum_{x,y\in D_0} \frac{\alpha_r(x)}{\underline \alpha_r} \xi(x)\xi(y) \left( f\left(\xi +\frac{\delta_y - \delta_x}{n}\right) - f(\xi) \right),
\]
defined for $f:\mathcal M^1_n(D_0)\to\R$. Denote also 
\[
\alpha_{D_0,\infty}(x) = \lim_{r\rightarrow\infty} \frac{\alpha_r(x)}{\underline \alpha_r} = \frac{1}{\min_{y \in D_0} \alpha_{x,y,\infty}},
\]
which is well defined and lies in $[1,\infty)$ since $D_0=\Lambda$, as well as
\[
L_{s,0}^{D_0,\infty}f(\xi) = \sum_{x,y\in D_0} \alpha_{D_0,\infty}(x)\xi(x)\xi(y) \left( f\left(\xi +\frac{\delta_y - \delta_x}{n}\right) - f(\xi) \right).
\]
The operator $L_{s,0}^{D_0,\infty}$ is the generator of a selection dynamics with death rates $\alpha_{D_0,\infty}$.
We have that
\begin{equation*}
\bar\eta_{r}(\delta_x) = \psi_x^{r}(\eta),
\end{equation*}
and for all $x\in D_0$ and $\xi \in \mathcal M^1_n(D_0)$,
\begin{equation}\label{eq:finite_set_poisson}
 L_{s,0}^{D_0,r}\psi^{r}_x(\xi) = 0. 
\end{equation}
Since $D_0\times \mathcal M^1_n(D_0)$ is a finite set, we may consider $(\psi_x^{\infty}(\xi))_{x\in D_0,\xi\in \mathcal M^1_n(D_0)}$ a sub-sequential limit of $(\psi_x^{r}(\xi))_{x\in D_0,\xi\in \mathcal M^1_n(D_0)}$. Passing to the limit $r \to \infty$ in Equation~\eqref{eq:finite_set_poisson}, we get that for all $x\in D_0$ and $\xi \in \mathcal M^1_n(D_0)$:
\[
L_{s,0}^{D_0,\infty}\psi_x^{\infty}(\xi) = 0,
\]
as well as $\psi_x^{\infty}(\delta_y) = \mathbbm 1_{x\neq y}$. Lemma~\ref{lem:commitor} then implies that
\[
\psi_x^{\infty}(\xi) =\mathbb P_{\xi}\po \pi^{0,\infty}_{\tau} = \delta_x \pf,
\]
where $\pi^{0,\infty}$ is a Markov process with generator $L_{s,0}^{D_0,\infty}$. Hence there is a unique sub-sequential limit, and $\psi_x^{r}(\xi)$ converge for all $\xi \in \mathcal M^1_n(D_0)$ and $x\in D_0$, and this is particularly true for $\xi = \eta$.

We now suppose that $\Lambda \neq D_0$. In this case, for all $x\in D_0\setminus\Lambda$, $\alpha_{D_0,\infty}=\infty$ and $L^{D_0,\infty}$ is ill-defined. To resort to the previous case, write 
\[S_1^{\Lambda}=\inf\left\{ t \geqslant 0, \text{ one particle starting from }\Lambda\text{ dies} \right\},
\]
and
\[\tau_{\mathcal M^1_n(\Lambda)} = \inf\left\{t\geqslant 0, \pi^0_t \in\mathcal M^1_n(\Lambda) \right\}.
\]
Using the notation of Lemma~\ref{lem:different-time-scale}, write 
\[\Theta = D_0\setminus\Lambda,\quad a_r = \inf\left\{ \alpha_r(x)/\underline\alpha_r, x\in D_0 \setminus \Lambda \right\},\quad  b_r = 1,\quad L_r^2 = L_{s,0}^{D_0,r}.\]
In this case, we have $\mathbb M^1 = \Delta \cup \mathcal M^1_n(\Lambda)$, and because $\Lambda\neq D_0$, $a_r\underset{r\rightarrow\infty}{\rightarrow} \infty$, and Lemma~\ref{lem:different-time-scale} (with $c_r = \sqrt{a_r}$) yields that:
\[
\lim_{r\rightarrow\infty}\mathbb P_{\eta}\po S_1^{\Lambda} \leqslant  \tau_{\mathcal M^1_n(\Lambda)} \pf = 0.
\]
Now, using the strong Markov property at time $\tau_{\mathcal M^1_n(\Lambda)}$, write:
\[
\mathbb P_{\eta}\po \pi^0_{\tau} = \delta_x \pf = \sum_{\xi\in \mathcal M^1_n(\Lambda)} \mathbb P_{\eta}\po S_1^{\Lambda} >  \tau_{\mathcal M^1_n(\Lambda)} \pf \mathbb P_{\eta}\po \pi^0_{\tau_{\mathcal M^1_n(\Lambda)}} = \xi \middle| S_1^{\Lambda} >  \tau_{\mathcal M^1_n(\Lambda)} \pf \mathbb P_{\xi}\po \pi^0_{\tau} = \delta_x \pf + R_r, 
\]
where a rest satisfying 
\[R_r \leqslant \mathbb P_{\eta}\po S_1^{\Lambda} \leqslant  \tau_{\mathcal M^1_n(\Lambda)} \pf \underset{r\rightarrow\infty}{\rightarrow}0.\]
From the case $D_0=\Lambda$ we get that for all $\xi \in \mathcal M^1_n(\Lambda)$, $\mathbb P_{\xi}\po \pi^0_{\tau} = \delta_x \pf$ admits a limit as $r$ goes to infinity. In addition, 
\[
\mathbb P_{\eta}\po \pi^0_{\tau_{\mathcal M^1_n(\Lambda)}} = \pi \middle| S_1^{\Lambda} >  \tau_{\mathcal M^1_n(\Lambda)} \pf
\] 
corresponds to the law of a Polya urn Markov chain with with color indexed by $\Lambda$, initial condition $n\eta|_{\Lambda}$, after $|D_0\setminus\Lambda|$ draws, and is hence independent of $r$. This concludes the convergence of $\mathbb P_{\eta}\po \pi^0_{\tau} = \delta_x \pf$ towards some $\bar \eta_{\infty}\in \mathcal M^1(\Delta)$. This concludes the proof since if $Y_0\in D$ is such that $\mathcal Law(\delta_{Y_0}) = \bar \eta_{\infty}$, then $\eta_{\infty}$ is the law of $Y_0$.
\end{proof}

\subsection{The Poisson equation~\texorpdfstring{\eqref{eq:Poisson}}{(21)}}\label{sec:poisson}

Let us now consider the Poisson equation~\eqref{eq:Poisson}. Before showing existence and uniqueness of a solution to this equation, let us describe the kernel of $L_{s,0}^*$ and $L_{s,0}$. Although this is not strictly necessary for the proof of Theorem~\ref{thm:soft-kill-1}, this makes the functional setting clearer. To proceed, let us recall that $L_{s,0}$ is defined as an operator of $\ell^{\infty}\po \mathcal M^1_n(D)\pf$, and $L_{s,0}^*$ as an operator of $\ell^{1}\po \mathcal M^1_n(D)\pf$. Define the weak-$\ast$ topology on $\ell^{\infty}\po \mathcal M^1_n(D)\pf$ by the fact that a sequence of elements $(w_k)_{k \geqslant 1}$ weak-$\ast$ converges to $w$ if $\langle v,w_k\rangle \to \langle v,w\rangle$ for any $v \in \ell^1(\mathcal M^1_n(D))$. Then $\ell^{1}\po \mathcal M^1_n(D)\pf$ equipped with the (strong) topology induced by the norm $\|\cdot\|_1$, and $\ell^{\infty}(\mathcal M^1_n(D))$ equipped with the weak-$\ast$ topology, are each other's topological dual. Let us now write 
\[
(\mathrm{Ker}L_{s,0})^{\perp} = \left\{v \in \ell^1\po \mathcal M^1_n(D) \pf,\, \langle v,w \rangle = 0,\, \forall w\in \mathrm{Ker} L_{s,0} \right\},
\]
recall the definition of the commitor functions from~\eqref{eq:commitor}, and denote by $\overline A$ the closure of a set $A$ for the topology at hand.

\begin{lem}\label{lem:topology}
We have
\[
(\mathrm{Ker}L_{s,0})^{\perp} = \{v \in \ell^1\po \mathcal M^1_n(D) \pf,\,\langle v, \psi_x^r\rangle = 0,\, \forall x\in D\},
\]
as well as
\[
 \mathrm{Ker} L_{s,0}^* =  \overline{\mathrm{Span}(\mathbbm{1}_x, x \in D)} .
\]
In the case where $D$ is infinite, the inclusion
\[
\mathrm{Span}(\psi_x, x \in D) \subset \mathrm{Ker} L_{s,0}
\]
is strict, and the kernel of $L_{s,0}$ is given by 
\[
\mathrm{Ker} L_{s,0} = \overline{\mathrm{Span}(\psi_x, x \in D)}^{w*},
\]
where $\overline{A}^{w*}$ denotes the closure for the weak-$\ast$ topology of a set $A\subset \ell^{\infty}(\mathcal M^1_n(D))$.
\end{lem}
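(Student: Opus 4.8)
The entire lemma rests on one identity, which I would establish first: every $w\in\mathrm{Ker}\,L_{s,0}$ (i.e.\ $w\in\ell^\infty(\mathcal M^1_n(D))$ with $L_{s,0}w=0$) is a convergent combination of committor functions. By Dynkin's formula $t\mapsto w(\pi^0_t)$ is a bounded martingale, and applying Lemma~\ref{lem:different-time-scale} to $L_{s,0}$ (with $L_r^2=0$, $a_r=1$, $\Theta=D$, $\gamma_r=\alpha_r$) the bound~\eqref{eq:bound-death-time} gives $\sup_\xi\mathbb P_\xi(\tau\geqslant t)\leqslant Ce^{-ct}$, so $\tau<\infty$ a.s.\ and $\pi^0_t=\pi^0_\tau\in\Delta$ for $t\geqslant\tau$. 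Letting $t\to\infty$ in $\E_\xi[w(\pi^0_t)]=w(\xi)$ yields
\[
w(\xi)=\E_\xi\left[w(\pi^0_\tau)\right]=\sum_{x\in D}w(\delta_x)\,\psi^r_x(\xi),\qquad\sum_{x\in D}|w(\delta_x)|\,\psi^r_x(\xi)\leqslant\|w\|_\infty,
\]
the last bound because $\sum_x\psi^r_x(\xi)=\mathbb P_\xi(\pi^0_\tau\in\Delta)=1$. Thus every $w\in\mathrm{Ker}\,L_{s,0}$ equals the absolutely convergent series $\sum_{x\in D}w(\delta_x)\psi^r_x$, with coefficients $|w(\delta_x)|\leqslant\|w\|_\infty$.

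The first identity then follows quickly. Since $\psi^r_x\in\mathrm{Ker}\,L_{s,0}$ by~\eqref{eq:commitore}, any $v\in(\mathrm{Ker}\,L_{s,0})^\perp$ annihilates each $\psi^r_x$. Conversely, if $\langle v,\psi^r_x\rangle=0$ for all $x$ and $w\in\mathrm{Ker}\,L_{s,0}$, then $\sum_{\xi,x}|v(\xi)|\,|w(\delta_x)|\,\psi^r_x(\xi)\leqslant\|w\|_\infty\|v\|_1<\infty$, so Fubini gives $\langle v,w\rangle=\sum_x w(\delta_x)\langle v,\psi^r_x\rangle=0$; hence $v\in(\mathrm{Ker}\,L_{s,0})^\perp$. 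The same Fubini estimate shows that for any $w\in\mathrm{Ker}\,L_{s,0}$ the net of partial sums $\big(\sum_{x\in F}w(\delta_x)\psi^r_x\big)_{F\subset D\text{ finite}}$ converges to $w$ in the weak-$*$ topology, whence $\mathrm{Ker}\,L_{s,0}\subset\overline{\mathrm{Span}(\psi^r_x,x\in D)}^{w*}$; the reverse inclusion holds because $\mathrm{Span}(\psi^r_x)\subset\mathrm{Ker}\,L_{s,0}$ and $\mathrm{Ker}\,L_{s,0}$ is weak-$*$ closed, being the intersection over $\xi$ of the weak-$*$ closed hyperplanes $\{w:L_{s,0}w(\xi)=\langle v_\xi,w\rangle=0\}$ with $v_\xi\in\ell^1$ finitely supported (selection only moves mass onto already-occupied sites, so each row of $L_{s,0}$ has finite support). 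Finally, when $D$ is infinite the inclusion $\mathrm{Span}(\psi^r_x)\subset\mathrm{Ker}\,L_{s,0}$ is strict: $\sum_{x\in D}\psi^r_x$, the function constantly equal to $1$, lies in $\mathrm{Ker}\,L_{s,0}$ (Markov generators annihilate constants) but is not a finite combination of the $\psi^r_x$, since evaluating such a combination at $\delta_y$ returns its coefficient of $\psi^r_y$, forcing all coefficients to be $1$.

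For $\mathrm{Ker}\,L_{s,0}^*$, the inclusion $\overline{\mathrm{Span}(\mathbbm 1_x,x\in D)}\subset\mathrm{Ker}\,L_{s,0}^*$ is clear, each $\delta_x$ being absorbing (so $L_{s,0}^*\mathbbm 1_x=0$) and $\mathrm{Ker}\,L_{s,0}^*$ being $\|\cdot\|_1$-closed. For the converse I would use the forward semigroup: for a probability measure $\mu$, $e^{tL_{s,0}^*}\mu=\mathcal Law(\pi^0_t)$ with $\pi^0_0\sim\mu$, and since $\pi^0$ is absorbed at $\tau$,
\[
\big\|e^{tL_{s,0}^*}\mu-\mathcal Law(\pi^0_\tau)\big\|_1\leqslant 2\,\mathbb P_\mu(\tau>t)\xrightarrow[t\to\infty]{}0
\]
by the tail bound above, with $\mathcal Law(\pi^0_\tau)\in\overline{\mathrm{Span}(\mathbbm 1_x)}$. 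Splitting a general $u\in\ell^1$ into positive and negative parts shows $e^{tL_{s,0}^*}u$ converges in $\ell^1$ to an element of the closed subspace $\overline{\mathrm{Span}(\mathbbm 1_x)}$; if moreover $L_{s,0}^*u=0$ then $e^{tL_{s,0}^*}u=u$ for all $t$, so $u\in\overline{\mathrm{Span}(\mathbbm 1_x)}$, which gives $\mathrm{Ker}\,L_{s,0}^*=\overline{\mathrm{Span}(\mathbbm 1_x,x\in D)}$.

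I expect the main obstacle to be this last convergence: $L_{s,0}$ generates a Markov dynamics with \emph{no} invariant distribution but infinitely many absorbing states, so one has to check that the forward semigroup converges towards the absorption law in the \emph{strong} $\ell^1$ topology (not merely weakly-$*$), and this is exactly where the quantitative exponential control on $\tau$ from Lemma~\ref{lem:different-time-scale} is needed. The remaining ingredients---Dynkin's formula, Fubini for the double series defining $\langle v,w\rangle$, and weak-$*$ continuity of the maps $w\mapsto L_{s,0}w(\xi)$---are routine.
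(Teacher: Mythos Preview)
Your proof is correct and follows essentially the same route as the paper's: the key identity $w(\xi)=\sum_{x}w(\delta_x)\psi^r_x(\xi)$ obtained from the bounded martingale $w(\pi^0_t)$ and the exponential tail bound on $\tau$, Fubini for the first identity, the constant function $1$ as the witness for strictness, and the forward-semigroup limit for $\mathrm{Ker}\,L_{s,0}^*$. Your justification that $\mathrm{Ker}\,L_{s,0}$ is weak-$*$ closed (each row of $L_{s,0}$ is finitely supported, so $w\mapsto L_{s,0}w(\xi)$ is weak-$*$ continuous) is in fact more explicit than what the paper writes, which only shows the inclusion $\mathrm{Ker}\,L_{s,0}\subset\overline{\mathrm{Span}(\psi^r_x)}^{w*}$ and leaves the reverse implicit.
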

\begin{proof}
For all $x\in D$, Lemma~\ref{lem:commitor} yields that $\psi_x^r\in \mathrm{Ker} L_{s,0}$, so that:
\[
(\mathrm{Ker} L_{s,0})^\perp \subset \{v \in \ell^1\po \mathcal M^1_n(D) \pf,\,\langle v, \psi_x^r\rangle = 0,\, \forall x\in D\} .
\]
Let us now prove the reverse inclusion. Let $v\in \ell^1\po \mathcal M^1_n(D) \pf$ such that $\langle v, \psi_x^r\rangle = 0$ for all $x\in D$, and $w\in \mathrm{Ker}L_{s,0}$ (and thus $w\in \ell^{\infty}\po \mathcal M^1_n(D)\pf$. The map $t\mapsto w(\pi^0_{t\wedge \tau})$ is a martingale, and hence for all $t\geqslant 0$, 
\begin{equation*}
w(\xi) = \E_{\xi}\po w(\pi^0_{\tau\wedge t}) \pf.
\end{equation*}
Since $w$ is bounded, and $\tau<\infty$ almost surely, by letting $t$ go to infinity,  the dominated convergence theorem yields that for all $\xi\in \mathcal M_n^1(D)$:
\begin{equation}\label{eq:noyau-L0}
w(\xi) = \sum_{x\in D}w(\delta_x) \psi_x(\xi),
\end{equation}
and the sum is actually finite because $\psi_x(\xi)=0$ if $x \notin {\rm supp}(\xi)$.  We have:
\[
\sum_{\xi\in\mathcal M^1_n(D)} \sum_{x\in D}|w(\delta_x)v(\xi)\psi_x(\xi)| = \sum_{\xi\in\mathcal M^1_n(D)}| v(\xi)| \sum_{x\in D} |w(\delta_x)|\psi_x^r(\xi) \leqslant \|v\|_1 \|w\|_\infty < \infty. 
\]
Hence, the following computation holds:
\begin{equation*}
\langle v, w\rangle = \sum_{\xi\in \mathcal M_n^1(D)}v(\xi)\sum_{x\in D} w(\delta_x) \psi_x(\xi) = \sum_{x\in D} w(\delta_x) \sum_{\xi\in \mathcal M_n^1(D)}v(\xi) \psi_x(\xi) =  \sum_{x\in D} w(\delta_x)\langle v, \psi_x^r\rangle = 0,
\end{equation*}
which yields that $v\in (\mathrm{Ker}L_{s,0})^{\perp}$, and concludes the first point.

We already saw in the introduction of Section~\ref{sec:general-rate} that $\mathrm{Span}(\mathbbm{1}_x, x \in D) \subset \mathrm{Ker} L_{s,0}^*$. Since $L_{s,0}^*$ is a bounded operator, $\mathrm{Ker} L_{s,0}^*$ is closed and  \[\overline{\mathrm{Span}(\mathbbm{1}_x, x \in D)} \subset \mathrm{Ker} L_{s,0}^*.\]
Let us now prove the reverse inclusion. For $v\in \mathrm{Ker} L_{s,0}^*$, the map \[t\mapsto \sum_{\xi\in\mathcal M^1_n(D)}v(\xi)\mathbb P_{\xi}\po \pi_t^0 = \eta \pf\] is constant for all $\eta\in \mathcal M^1_n(D)$. Write for all $\eta \in \mathcal M^1_n(D)$:
\begin{align*}
    \mathbb P_{\xi} \po \pi^0_s = \eta \pf - \mathbb P_{\xi} \po \pi^0_{\tau} = \eta \pf & = \mathbb P_{\xi} \po \pi^0_s = \eta,\tau>s \pf + \mathbb P_{\xi} \po \pi^0_s = \eta,\tau<s \pf - \mathbb P_{\xi} \po \pi^0_{\tau} = \eta \pf \\ &= \mathbb P_{\xi} \po \pi^0_s = \eta,\tau>s \pf - \mathbb P_{\xi} \po \pi^0_{\tau} = \eta,\tau>s \pf,
\end{align*}
which yields
\begin{equation}\label{eq:bound-coupling}
\sum_{\eta\in\mathcal M^1_n(D)} \left| \mathbb P_{\xi} \po \pi^0_s = \eta \pf - \mathbb P_{\xi} \po \pi^0_{\tau} = \eta \pf \right| \leqslant 2\mathbb P_{\xi}\po \tau >s \pf. 
\end{equation}
Using the notations of Lemma~\ref{lem:different-time-scale}, write:
\[
\Theta = D,\quad a_r = 1,\quad \gamma_r = \alpha_r,\quad L^2_r = 0.
\]
Lemma~\ref{lem:different-time-scale} then yields that there exist $C,c>0$ such that for all $s\geqslant 0$
\begin{equation}\label{eq:bound-tau-dirac}
    \sup_{r\in \N}\sup_{\xi\in \mathcal M^1_n(D),} \mathbb P_{\xi}\po \tau >s \pf \leqslant Ce^{-cs}.
\end{equation}
The bound~\eqref{eq:bound-coupling} and~\eqref{eq:bound-tau-dirac} yields that we may let $t$ go to infinity and get:
\[
v = \sum_{x\in D}v(\delta_x) \mathbbm 1_x.
\]
Since this sum converges in $\ell^{1}\po \mathcal M^1_n(D)\pf$ as soon as $v\in \ell^{1}\po \mathcal M^1_n(D)\pf$, $v\in \overline{\mathrm{Span}(\mathbbm{1}_x, x \in D)}$, which concludes the proof of the second point.
 
Define for all $\xi\in \mathcal M^1_n(D)$, 
\[
f(\xi) = \sum_{x\in D} \psi_x(\xi).
\]
In this sum, there are at most $n$ non-vanishing terms, and thus $f\in \ell^{\infty}\po \mathcal M^1_n(D) \pf$. It is easily seen that $L_{s,0}f=0$. If $g=\sum_{x\in D_0} a_x\psi_x(\xi)$ for some finite subset $D_0\subset D$ and $a_x\in \R$, because $D$ is infinite, $D_0\neq D$ and we have that $\|f-g\|_{\infty} \geqslant 1$, yielding that 
\[
f\in {\rm Ker} L_{s,0}\setminus \overline{\mathrm{Span}(\psi_x, x \in D)}.
\]

Finally, for all $w\in \mathrm{Ker}L_{s,0}$, we have shown that $w$ satisfies~\eqref{eq:noyau-L0}, and the sum in the right-hand side of~\eqref{eq:noyau-L0} converges for the weak-$\ast$ topology, which proves the last point.
\end{proof}

We are now ready to solve the Poisson equation~\eqref{eq:Poisson}, relying on the next lemma. For any $v \in \ell^1(\mathcal M^1_n(D))$ and $s \geqslant 0$, let us define $v e^{sL_{s,0}}$ by:
\begin{equation}\label{eq:semi-group}
\forall \eta \in \mathcal M^1_n(D),\qquad    v e^{sL_{s,0}}(\eta) = \sum_{\xi \in \mathcal M^1_n(D)} v(\xi) \mathbb{P}_\xi(\pi^0_s=\eta),
\end{equation}
which defines an element of $\ell^1(\mathcal M^1_n(D))$.

\begin{lem}\label{lem:poisson-eq}
For all $v\in (\mathrm{Ker} L_{s,0})^\perp (\subset \ell^1(\mathcal M^1_n(D)))$, there exists a unique $w\in (\mathrm{Ker} L_{s,0})^\perp$ such that 
\begin{equation}\label{eq:PPoisson}
L_{s,0}^*w = v,
\end{equation}
given by:
\[
w = -\int_0^{\infty} ve^{s L_{s,0}} \dd s.
\]
Moreover, there exists $C>0$ independent of $r$ and $v$ such that
\[
\|w\|_{1} \leqslant C\|v\|_{1}.
\]

\end{lem}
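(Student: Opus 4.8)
The plan is to construct $w$ directly from the stated integral formula, the main ingredient being an $r$-uniform exponential decay of $s \mapsto v e^{sL_{s,0}}$ in $\ell^1(\mathcal M^1_n(D))$ whenever $v \in (\mathrm{Ker}\,L_{s,0})^\perp$. Since $L_{s,0}$ is bounded, the map $s \mapsto v e^{sL_{s,0}}$ defined by~\eqref{eq:semi-group} is the norm-continuous, norm-differentiable semigroup generated by the bounded operator $L_{s,0}^*$ on $\ell^1(\mathcal M^1_n(D))$, so $\tfrac{\dd}{\dd s}\big(v e^{sL_{s,0}}\big) = L_{s,0}^*\big(v e^{sL_{s,0}}\big)$. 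The key observation is that, for $v \in (\mathrm{Ker}\,L_{s,0})^\perp$ and every $\eta \in \mathcal M^1_n(D)$, one has $\sum_{\xi} v(\xi)\,\mathbb P_\xi(\pi^0_\tau = \eta) = 0$: this is immediate when $\eta \notin \Delta$ because $\pi^0_\tau \in \Delta$ almost surely, and when $\eta = \delta_x$ the sum equals $\langle v, \psi^r_x\rangle$ (recall~\eqref{eq:commitor}), which vanishes by the first identity of Lemma~\ref{lem:topology}. Subtracting this null quantity coordinatewise and applying Tonelli's theorem yields $\|v e^{sL_{s,0}}\|_1 \leqslant \sum_\xi |v(\xi)| \sum_\eta \big|\mathbb P_\xi(\pi^0_s = \eta) - \mathbb P_\xi(\pi^0_\tau = \eta)\big| \leqslant 2\sum_\xi |v(\xi)|\,\mathbb P_\xi(\tau > s)$, the last step being exactly the coupling bound~\eqref{eq:bound-coupling}. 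The $r$-uniform tail estimate $\sup_\xi \mathbb P_\xi(\tau > s) \leqslant Ce^{-cs}$ from~\eqref{eq:bound-tau-dirac} (itself a consequence of Lemma~\ref{lem:different-time-scale}) then gives $\|v e^{sL_{s,0}}\|_1 \leqslant 2Ce^{-cs}\|v\|_1$.

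With this decay in hand, $w := -\int_0^\infty v e^{sL_{s,0}}\,\dd s$ converges absolutely as a Bochner integral in $\ell^1(\mathcal M^1_n(D))$, with $\|w\|_1 \leqslant (2C/c)\|v\|_1$, which is the announced bound with a constant independent of $r$ and $v$. Each integrand lies in $(\mathrm{Ker}\,L_{s,0})^\perp$: indeed, for $g \in \mathrm{Ker}\,L_{s,0}$ we have $e^{sL_{s,0}}g = g$, so $\langle v e^{sL_{s,0}}, g\rangle = \langle v, g\rangle = 0$; since $(\mathrm{Ker}\,L_{s,0})^\perp$ is norm-closed, $w$ also lies in it. Finally, using that $L_{s,0}^*$ is bounded to interchange it with the integral, then the fundamental theorem of calculus together with $\lim_{s\to\infty} v e^{sL_{s,0}} = 0$,
\[
L_{s,0}^* w = -\int_0^\infty L_{s,0}^*\big(v e^{sL_{s,0}}\big)\,\dd s = -\int_0^\infty \tfrac{\dd}{\dd s}\big(v e^{sL_{s,0}}\big)\,\dd s = v - \lim_{s\to\infty} v e^{sL_{s,0}} = v,
\]
so $w$ solves~\eqref{eq:PPoisson}.

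For uniqueness, if $w_1, w_2 \in (\mathrm{Ker}\,L_{s,0})^\perp$ both solve~\eqref{eq:PPoisson}, then $u := w_1 - w_2 \in \mathrm{Ker}\,L_{s,0}^* \cap (\mathrm{Ker}\,L_{s,0})^\perp$; by the second identity of Lemma~\ref{lem:topology}, $u = \sum_{x \in D} u(\delta_x)\,\mathbbm 1_x$ with convergence in $\ell^1$, and pairing with $\psi^r_y$ while using $\langle \mathbbm 1_x, \psi^r_y\rangle = \psi^r_y(\delta_x) = \mathbbm 1_{x=y}$ gives $u(\delta_y) = \langle u, \psi^r_y\rangle = 0$ for all $y$, whence $u = 0$. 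I expect the only genuine work to lie in the first step: identifying the coordinatewise limit of $v e^{sL_{s,0}}$ as $0$ — this is precisely what the orthogonality $\langle v, \psi^r_x\rangle = 0$ is for, and it is what singles out $(\mathrm{Ker}\,L_{s,0})^\perp$ as the correct solution space — and then upgrading pointwise convergence to an $r$-uniform $\ell^1$ rate through~\eqref{eq:bound-coupling} and the absorption bound of Lemma~\ref{lem:different-time-scale}. The rest is routine functional analysis for bounded generators.
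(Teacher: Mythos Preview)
Your proposal is correct and follows essentially the same route as the paper. Both arguments hinge on the same three ingredients: the identification $(\mathrm{Ker}\,L_{s,0})^\perp = \{v : \langle v,\psi_x^r\rangle = 0\}$ from Lemma~\ref{lem:topology} to subtract the absorption limit, the coupling bound~\eqref{eq:bound-coupling} together with the $r$-uniform tail~\eqref{eq:bound-tau-dirac} to obtain $\|ve^{sL_{s,0}}\|_1 \leqslant 2Ce^{-cs}\|v\|_1$, and the description $\mathrm{Ker}\,L_{s,0}^* = \overline{\mathrm{Span}(\mathbbm 1_x)}$ combined with pairing against $\psi_y^r$ for uniqueness; the only differences are cosmetic (you phrase things via Bochner integrals and bounded-generator calculus, while the paper works coordinatewise).
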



\begin{proof}
Let us start with the existence. Write, for $\eta \in \mathcal M^1_n(D)$ 
\[
w(\eta) = -\po\int_0^{\infty} ve^{s L_{s,0}} \dd s \pf(\eta) = -\int_0^{\infty}\sum_{\xi \in \mathcal M^1_n(D)} v(\xi) \mathbb P_{\xi} \po \pi^0_s = \eta \pf \dd s.
\]
We first show that $w$ is well-defined. Recall the stopping time $\tau$ defined in~\eqref{eq:tau_urne}. Since $v\in (\mathrm{Ker} L_{s,0})^\perp $, Lemma~\ref{lem:topology} yields that $\langle v,\psi_x^r \rangle = 0$ for all $x\in D$, and we have
\[
\sum_{\xi \in \mathcal M^1_n(D)} v(\xi) \mathbb P_{\xi} \po \pi^0_{\tau} = \delta_x \pf = \sum_{\xi \in \mathcal M^1_n(D)} v(\xi)\psi_x^r(\xi) = 0.
\]
Since, if $\eta \notin \Delta$, $\mathbb P_{\xi}\po \pi^0_{\tau} = \eta \pf = 0$, we have that
\[
\sum_{\xi \in \mathcal M^1_n(D)} v(\xi) \mathbb P_{\xi} \po \pi^0_s = \eta \pf = \sum_{\xi \in \mathcal M^1_n(D)} v(\xi) \po  \mathbb P_{\xi} \po \pi^0_s = \eta \pf - \mathbb P_{\xi} \po \pi^0_{\tau} = \eta \pf \pf.
\]
The bounds~\eqref{eq:bound-coupling} and~\eqref{eq:bound-tau-dirac} imply that $w$ is well-defined, because $v\in \ell^1(\mathcal M^1_n(D))$, and we have
\[
\|w\|_{1} \leqslant 2C\int_0^{\infty} e^{-cs} \dd s \|v\|_{1} = \frac{2C}{c}\|v\|_{1}.
\]

The Kolmogorov equations yield that $L_{s,0}^*\mathbb P_{\xi} \po \pi^0_s = \cdot \pf = \partial_s \mathbb P_{\xi} \po \pi^0_s = \cdot \pf$, and hence for all $\eta\in \mathcal M^1_n(D)$:
\begin{align*}
L_{s,0}^*w(\eta) &= -\int_0^{\infty}\sum_{\xi \in \mathcal M^1_n(D)} v(\xi) L_{s,0}^*\mathbb P_{\xi} \po \pi^0_s = \cdot \pf \po \eta\pf \dd s \\ &= -\int_0^{\infty}\sum_{\xi \in \mathcal M^1_n(D)} v(\xi) \partial_s \mathbb P_{\xi} \po \pi^0_s = \eta \pf \dd s \\ &= -\int_0^{\infty}\partial_s\po\sum_{\xi \in \mathcal M^1_n(D)} v(\xi) \mathbb P_{\xi} \po \pi^0_s = \eta \pf \pf \dd s.
\end{align*}
We already saw that 
\[
\lim_{s\rightarrow\infty} \sum_{\xi \in \mathcal M^1_n(D)} v(\xi) \mathbb P_{\xi} \po \pi^0_s = \eta \pf = 0,
\]
and hence $L_{s,0}^*w(\eta) = v(\eta)$.

Let $f\in \mathrm{Ker} L_{s,0}$. We have that
\[
\langle w,f\rangle = - \int_0^{\infty} \sum_{\xi,\eta\in\mathcal M_n^1(D)}v(\xi)\mathbb P_{\xi}\po \pi_s^0= \eta \pf f(\eta) \dd s = - \int_0^{\infty} \sum_{\xi\in\mathcal M_n^1(D)}v(\xi)\po e^{sL_{s,0}}f \pf(\xi)\dd s = 0,
\]
so that $w\in (\mathrm{Ker} L_{s,0})^\perp$, which concludes the existence.

To prove uniqueness, fix $w\in \mathrm{Ker} L_{s,0}^* \cap  (\mathrm{Ker} L_{s,0})^\perp $. The fact that $L^*_{s,0}w= 0$ yields that for all $\eta \in\mathcal M_n^1(D)$, $t\geqslant 0$:
\[
w(\eta) = \sum_{\xi\in\mathcal M_n^1(D)} w(\xi)\mathbb P_{\xi}\po \pi_t^0 = \eta \pf.
\]
Letting $t\rightarrow\infty$, because $w\in \ell^1\po \mathcal M^1_n(D)\pf$, Inequalities~\eqref{eq:bound-coupling}, and~\eqref{eq:bound-tau-dirac} then imply that
\[
w(\eta) = \sum_{\xi\in\mathcal M_n^1(D)} w(\xi)\mathbb P_{\xi}\po \pi_{\tau}^0 = \eta \pf\, \forall \eta\in \mathcal M_n^1(D).
\]
In particular, $w(\eta)=0$ for all $\eta\in \mathcal M_n^1(D)\setminus \Delta$,
and thus we may write $w(\eta) = \sum_{x\in D} w(\delta_x)\mathbbm 1_x(\eta)$, where $\mathbbm 1_x$ was defined in~\eqref{eq:dirac-dirac}, and where the equality holds in $\mathcal M^1_n(D)$. We have for all $x\in D$:
\[
\sum_{\eta\in\mathcal M^1_n(D)} \sum_{y\in D}|w(\delta_y)\mathbbm 1_y(\eta) \psi_x^r(\eta)| = \sum_{y\in D} |w(\delta_y)|\psi_x^r(\delta_y) \leqslant \|w\|_1 <\infty,
\]
and hence the following computation holds:
\begin{align*}
0 &= \langle w,\psi_x^r \rangle = \sum_{\eta\in\mathcal M^1_n(D)}w(\eta) \psi_x^r(\eta) \\&= \sum_{\eta\in\mathcal M^1_n(D)}\po \sum_{y\in D}w(\delta_y)\mathbbm 1_y(\eta) \pf \psi_x^r(\eta) = \sum_{y\in D} w(\delta_y)\psi_x^r(\delta_y) = w(\delta_x).
\end{align*}
Thus, $w=0$, which concludes for the uniqueness.
\end{proof}

In the proof of Theorem~\ref{thm:soft-kill-1}, in order to apply Lemma~\ref{lem:poisson-eq}, we need to show that for all $x\in D$, 
\[
\langle \partial_t u_0 - L_m^*u_0 ,\psi_x^r \rangle = 0.
\]
To this end, let us introduce
\[
\delta_{x;y} = \frac{n-1}{n}\delta_x + \frac{1}{n}\delta_y \in \mathcal M^1_n(D).
\]
The next lemma provides an explicit formula for $\psi_x^r(\delta_{x;y})$.
The fact that we only need the values of the committor functions on the set $\Delta \cup\left\{ \delta_{x;y}; x,y\in D \right\}$ is a consequence of the fact that, as explained above, in the limiting regime, the selection dynamics is much faster than the mutation dynamics, so that the support of $\pi_t$ is concentrated on $\{x,y\}$
in the transition from $\delta_x$ to $\delta_y$. After one particle jumped from $x$ to $y$, the probability that all the particles go back to $x$ is $\psi^r_x(\delta_{x;y})$, and the probability that all the particles jumps to $y$ is $\psi^r_y(\delta_{x;y})$.

\begin{lem}\label{lem:game}
For all $x,y\in D$, $x\neq y$, the committor functions $\psi_x^r$ satisfy:
\[
\psi_x^r(\delta_{x;y}) = \alpha_{x,y,r}\frac{\alpha_{x,y,r}^{n-1}-1}{\alpha_{x,y,r}^n-1},\qquad \psi_x^r(\delta_{y;x}) =  \alpha_{x,y,r}^{n-1}\frac{\alpha_{x,y,r}-1}{\alpha_{x,y,r}^n-1},
\]
where $\alpha_{x,y,r}$ is defined in~\eqref{eq:coef_lim}.
\end{lem}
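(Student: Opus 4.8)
The starting point is the probabilistic definition~\eqref{eq:commitor}: $\psi_x^r(\eta) = \mathbb P_\eta(\pi^0_\tau = \delta_x)$, where $\pi^0$ has generator $L_{s,0}$ and $\tau$ is defined in~\eqref{eq:tau_urne}. The key remark is that, under the selection dynamics, a killed particle is always replaced by a copy of a surviving one, so the support of $\pi^0_t$ is non-increasing for inclusion; hence, if $\pi^0_0$ is supported on $\{x,y\}$, as is the case for $\delta_{x;y}$ and $\delta_{y;x}$, then $\pi^0_t$ stays, for all $t \geqslant 0$, in the finite set $\{\frac{n-k}{n}\delta_x + \frac{k}{n}\delta_y : 0 \leqslant k \leqslant n\}$. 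It thus suffices to analyse the restriction of $L_{s,0}$ to this set. (Since the reduction never leaves $\mathcal M^1_n(\{x,y\})$, one may alternatively avoid any probabilistic argument and directly invoke the uniqueness statement of Lemma~\ref{lem:commitor} with $\Theta=\{x,y\}$ and $\gamma=\alpha_r|_{\{x,y\}}\geqslant 1$, once the explicit formulas are shown to solve the Dirichlet problem~\eqref{eq:Laplace}.)

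I would then parametrize this set by the number $k\in\{0,\dots,n\}$ of particles located at $x$, which turns $(\pi^0_t)$ into a continuous-time birth--death chain on $\{0,\dots,n\}$ with $0$ and $n$ absorbing (they correspond to $\delta_y$ and $\delta_x$). Reading off the partial rates in~\eqref{eq:Ls0}, from a state $k$ with $1\leqslant k\leqslant n-1$ the chain jumps to $k+1$ at rate $\frac{k(n-k)}{n-1}\alpha_r(y)$ and to $k-1$ at rate $\frac{k(n-k)}{n-1}\alpha_r(x)$. The up- and down-rates differ only through the factors $\alpha_r(y)$ and $\alpha_r(x)$, so the embedded jump chain is a spatially homogeneous random walk absorbed at $0$ and $n$, going up with probability $\frac{\alpha_r(y)}{\alpha_r(x)+\alpha_r(y)}$ and down with probability $\frac{\alpha_r(x)}{\alpha_r(x)+\alpha_r(y)}$; by~\eqref{eq:coef_lim} and~\eqref{eq:normalised-coef} the ratio of the down- to the up-probability is $\alpha_r(x)/\alpha_r(y) = 1/\alpha_{x,y,r}$.

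Since hitting probabilities depend only on the embedded chain, $\psi_x^r\big(\frac{n-k}{n}\delta_x + \frac{k}{n}\delta_y\big)$ equals the probability that this random walk, started from $k$, reaches $n$ before $0$, which is the classical Gambler's ruin quantity: setting $\rho = 1/\alpha_{x,y,r}$, it equals $\frac{1-\rho^{k}}{1-\rho^{n}}$ when $\alpha_{x,y,r}\neq 1$ and $\frac{k}{n}$ when $\alpha_{x,y,r}=1$. It then remains to specialize to $k=n-1$ (the configuration $\delta_{x;y}$) and $k=1$ (the configuration $\delta_{y;x}$) and to multiply numerator and denominator by $\alpha_{x,y,r}^{n}$: this rewrites $\frac{1-\alpha_{x,y,r}^{-(n-1)}}{1-\alpha_{x,y,r}^{-n}}$ as $\alpha_{x,y,r}\frac{\alpha_{x,y,r}^{n-1}-1}{\alpha_{x,y,r}^{n}-1}$, and $\frac{1-\alpha_{x,y,r}^{-1}}{1-\alpha_{x,y,r}^{-n}}$ as $\alpha_{x,y,r}^{n-1}\frac{\alpha_{x,y,r}-1}{\alpha_{x,y,r}^{n}-1}$, which are the announced expressions (for $\alpha_{x,y,r}=1$ these are read in the limiting sense, giving $\frac{n-1}{n}$ and $\frac1n$, consistently with $\frac{k}{n}$).

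There is no genuine difficulty in this argument; the only point deserving a word of justification is the reduction to the two-site chain, namely that $(\pi^0_t)$ started from a configuration supported on $\{x,y\}$ remains supported on $\{x,y\}$ until $\tau$, and that $\tau<\infty$ almost surely — the latter being exactly~\eqref{eq:bound-death-time} in Lemma~\ref{lem:different-time-scale}. Everything else is the standard solution of a two-boundary Gambler's ruin problem.
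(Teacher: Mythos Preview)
Your proof is correct and follows essentially the same route as the paper: reduce to a two-site birth--death chain on $\{0,\dots,n\}$ via the support-monotonicity of the selection dynamics, identify the jump rates, and solve the resulting Gambler's ruin problem (the paper writes out the recurrence explicitly while you quote the classical formula). One tiny notational slip: having declared $k$ to be the number of particles at $x$, the configuration you should write is $\frac{k}{n}\delta_x+\frac{n-k}{n}\delta_y$, not $\frac{n-k}{n}\delta_x+\frac{k}{n}\delta_y$; your subsequent identifications $k=n-1\leftrightarrow\delta_{x;y}$ and $k=1\leftrightarrow\delta_{y;x}$ and the final formulas are nonetheless correct.
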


\begin{proof}
The selection dynamics cannot extend the support of a measure, hence if ${\rm supp}\, \pi^0_0 = \left\{x,y\right\}$, then ${\rm supp}\, \pi^0_t \subset \left\{x,y\right\}$ for all $t\geqslant 0$ and $Z_t = \pi^0_t(x)$ completely determines $\pi^0_t$. The process $Z$ corresponds to a Gambler's ruin problem, namely a Markov process on $\left\{0,...,n\right\}$ with jump rates \[
p(k,k+1) = \alpha_r(y)\frac{k(n-k)}{n-1},\quad p(k,k-1) = \alpha_r(x)\frac{k(n-k)}{n-1},
\]
where $\alpha_r$ is given by~\eqref{eq:normalised-coef}. Denote by $R$ its generator, 
\[
\tau_i(Z) = \inf\left\{t\geqslant 0, Z_t = i \right\},\quad g(k) = \mathbb P_k(\tau_n(Z) < \tau_0(Z)).
\]
Then, $g$ is solution to the problem:
\[
Rg = 0, \, g(0) = 0, \, g(n) = 1.
\]
We have:
\[
Rg(k) = \alpha_r(y)\frac{k(n-k)}{n-1} \po g(k+1) - g(k) \pf + \alpha_r(x)\frac{k(n-k)}{n-1} \po g(k-1) - g(k) \pf.
\]
Hence:
\[
g(k+1) - g(k) = \alpha_{x,y,r}^{-1}\po g(k) - g(k-1) \pf,
\]
$\Leftrightarrow$
\[
g(k) - g(k-1) = \alpha_{x,y,r}^{-(k-1)}\po g(1) - g(0) \pf = \alpha_{x,y,r}^{-(k-1)}g(1),
\]
$\Leftrightarrow$
\[
g(k) = \frac{\alpha_{x,y,r}^{-k}-1}{\alpha_{x,y,r}^{-1}-1} g(1).
\]
The condition $g(n)=1$ yields:
\[
g(1) = \frac{\alpha_{x,y,r}^{-1} - 1}{\alpha_{x,y,r}^{-n}-1},
\]
and finally:
\[
\psi_x^r(\delta_{x;y}) = g(n-1) = \frac{\alpha_{x,y,r}^{-(n-1)}-1}{\alpha_{x,y,r}^{-n}-1},\quad  \psi_x^r(\delta_{y;x}) = g(1) = \frac{\alpha_{x,y,r}^{-1}-1}{\alpha_{x,y,r}^{-n}-1},
\]
which concludes the proof.
\end{proof}

\subsection{Proof of Theorem~\ref{thm:soft-kill-1}}\label{sec:ProofT1}

The proof of Theorem~\ref{thm:soft-kill-1} requires a last technical result on the convergence of jump Markov processes when their jump rates converge. We recall that the rates $\tilde q_r$ and $\tilde q_{\infty}$ are respectively defined by~\eqref{eq:jump-rates-r} and~\eqref{eq:jump-rates-lim}.

\begin{lem}\label{lem:conv-Yr-to-Y}
Let $Y^r$ and $Y$ be jump Markov processes on $D$ with rates $\tilde q_r$ and $\tilde q_{\infty}$ respectively. Under Assumption~\ref{assu:assu1}
\[
\lim_{r\rightarrow\infty}\|\mathcal Law (Y_0^r) - \mathcal Law (Y_0)\|_{TV} = 0 \implies \forall t\geqslant 0,\, \lim_{r\rightarrow\infty}\|\mathcal Law (Y_t^r) - \mathcal Law (Y_t)\|_{TV} = 0.
\]
\end{lem}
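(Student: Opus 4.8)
The plan is to prove convergence of one-dimensional marginals by a direct analysis of the Kolmogorov forward equations for the two chains, exploiting the fact that the jump rates $\tilde q_r$ are uniformly bounded (by $nQ$, using the elementary bound $0 \le (\alpha-1)/(\alpha^n-1) \le 1$ for $\alpha>0$, with value $1/n$ at $\alpha=1$) and converge pointwise to $\tilde q_\infty$. The first thing I would record is precisely this: that $\sup_r \sup_{x,y} \tilde q_r(x,y) \le nQ$ and that $\lim_r \tilde q_r(x,y) = \tilde q_\infty(x,y)$ for every pair $(x,y)$, which follows from Assumption~\ref{assu:assu1} together with continuity of $\alpha \mapsto (\alpha-1)/(\alpha^n-1)$ on $(0,\infty)$ extended by $1/n$ at $1$ and by $0$ at $+\infty$. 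Note that the total exit rate from $x$ is $\tilde q_r(x) := \sum_y \tilde q_r(x,y) = \sum_y n q(x,y)(\alpha_{x,y,r}-1)/(\alpha_{x,y,r}^n-1) \le n Q$, and likewise $\tilde q_\infty(x) \le nQ$ (with the convention that the sum is over $y$ with $q(x,y)>0$; if $D$ is infinite this is a convergent series dominated by $nQ$).

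Next I would set $p^r_t(x) = \mathbb P(Y^r_t = x)$ and $p_t(x) = \mathbb P(Y_t = x)$, viewed as elements of $\ell^1(D)$, and $e_t = \|p^r_t - p_t\|_1$. Both satisfy the forward equation $\partial_t p_t(x) = \sum_{y} p_t(y)\tilde q(y,x) - p_t(x)\tilde q(x)$ with the respective rates. Writing $d_t := p^r_t - p_t$, one gets $\partial_t d_t = A_r^* d_t + (A_r^* - A_\infty^*) p_t$, where $A_r^*, A_\infty^*$ are the (bounded) forward generators. Since $A_r^*$ generates a positivity-preserving contraction semigroup on $\ell^1(D)$ (it is the adjoint of a Markov generator), Duhamel's formula gives $d_t = e^{tA_r^*} d_0 + \int_0^t e^{(t-s)A_r^*}(A_r^* - A_\infty^*)p_s\,\dd s$, whence
\[
e_t \le e_0 + \int_0^t \|(A_r^* - A_\infty^*)p_s\|_1 \,\dd s.
\]
By hypothesis $e_0 = \|\mathcal Law(Y^r_0) - \mathcal Law(Y_0)\|_{TV} \to 0$, so it remains to control the integral term. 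For a fixed probability measure $\mu$ on $D$, $\|(A_r^* - A_\infty^*)\mu\|_1 \le \sum_{x,y} \mu(y)|\tilde q_r(y,x) - \tilde q_\infty(y,x)| + \sum_x \mu(x)|\tilde q_r(x) - \tilde q_\infty(x)| \le 2\sum_{y}\mu(y)\sum_x |\tilde q_r(y,x) - \tilde q_\infty(y,x)|$. The inner sum $\varepsilon_r(y) := \sum_x |\tilde q_r(y,x) - \tilde q_\infty(y,x)|$ tends to $0$ as $r\to\infty$ for each fixed $y$: the summands go to $0$ termwise and are dominated by $n q(y,x) + \tilde q_\infty(y,x)$, a summable family, so dominated convergence applies. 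Moreover $\varepsilon_r(y) \le 2nQ$ uniformly, so by dominated convergence again (now in $y$, against the probability measure $p_s$) one gets $\|(A_r^* - A_\infty^*)p_s\|_1 \le 2\sum_y p_s(y)\varepsilon_r(y) \to 0$ for each fixed $s$, still bounded by $4nQ$. A final application of dominated convergence on $[0,t]$ shows $\int_0^t \|(A_r^*-A_\infty^*)p_s\|_1\,\dd s \to 0$, and hence $e_t \to 0$.

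The main obstacle is purely the infinite-dimensionality of $D$: if $D$ were finite the pointwise convergence $\tilde q_r \to \tilde q_\infty$ would immediately give operator-norm convergence and the result would be a one-line Gronwall argument. With $D$ countably infinite one must instead justify the three nested dominated-convergence steps (over $x$ in $\varepsilon_r(y)$, over $y$ against $p_s$, and over $s$ on $[0,t]$), each of which requires an explicit summable/integrable dominating function; the uniform bound $\tilde q_r(x,y),\tilde q_\infty(x,y) \le nq(x,y)$ together with $\sum_y q(x,y) \le Q < \infty$ from~\eqref{eq:assQlambda} supplies all of them. One should also take a moment to confirm that the forward equation for $Y^r$ (resp.\ $Y$) is well-posed in $\ell^1(D)$ and that $t\mapsto p^r_t, p_t$ are the unique $\ell^1$-valued solutions — this is standard for jump processes with uniformly bounded total jump rates, so I would simply cite it.
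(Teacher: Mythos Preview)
Your proof is correct. The Duhamel representation $d_t = e^{tA_r^*}d_0 + \int_0^t e^{(t-s)A_r^*}(A_r^*-A_\infty^*)p_s\,\dd s$, combined with the $\ell^1$-contraction of the forward semigroup and the three nested dominated-convergence arguments, is a clean and fully self-contained route; the key domination $\tilde q_r(x,y),\tilde q_\infty(x,y)\le n q(x,y)$ with $\sum_y q(x,y)\le Q$ is exactly what is needed and you identify it correctly.

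The paper takes a different, much shorter path: it simply records that $\tilde q_r(x,y)\to\tilde q_\infty(x,y)$ pointwise and that $\sum_{y}\sup_r \tilde q_r(x,y)<\infty$ for each $x$, and then invokes Kallenberg's convergence theorem for pure-jump Markov processes (Theorem~19.25 in \emph{Foundations of Modern Probability}). That theorem gives convergence in distribution of the full process $(Y^r_t)_{t\ge 0}$ in the Skorohod $J_1$-topology, from which convergence of the one-dimensional marginals (and hence, on the discrete space $D$, total-variation convergence by Scheff\'e) follows. So the paper trades your explicit estimate for a black-box citation. What your approach buys is self-containment and, in principle, quantitative control: if one had a rate on $\varepsilon_r(y)$ one could read off a rate on $e_t$. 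What the paper's approach buys is brevity and a stronger intermediate conclusion (trajectorial convergence), which is in fact reused later in the proof of Lemma~\ref{lem:conv-Yr-to-Y-traj}.
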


\begin{proof}
We have that for all $x,y\in D$, 
\[
\lim_{r\rightarrow\infty}\tilde q_r(x,y) = \tilde q_{\infty}(x,y),
\]
and for all $x\in D$,
\[
\sum_{y\in D}\sup_{r\in \N}\tilde q_r(x,y) < \infty.
\]
Hence, \cite[Theorem~19.25, p.~385]{Kal02} yields the result.
\end{proof}

We are now in position to prove Theorem~\ref{thm:soft-kill-1}.
\begin{proof}[Proof of Theorem~\ref{thm:soft-kill-1}]
Recall the jump rates:
\[
\tilde q_{r}(x,y) = \left\{ \begin{aligned} n &q(x,y) \frac{\alpha_{x,y,r}-1}{\alpha_{x,y,r}^n-1} &\text{ if }\alpha_{x,y,r} \neq 1, \\ &q(x,y)&\text{ otherwise,} 
\end{aligned}\right.
\]
and let $Y^{r}$ be the Markov process with jump rates $(\tilde q_{r}(x,y))_{x,y\in D}$ and initial condition $Y_0^r$ such that the law of $\delta_{Y_0^r}$ is $\mathcal Law(\pi_{\tau}^0)$. Write $p_x^{r}(t) = \mathbb P\po Y_t^r = x \pf$, for $x\in D$. The $(p_x^{r})_{x\in D}$ satisfy:
\[
\sum_{x\in D} p_x^{r} = 1,
\]
and the Kolmogorov equations:
\[
\partial_t p_x^{r} = \sum_{y\neq x} \po \tilde q_{r}(y,x) p_y^{r} - \tilde q_{r}(x,y) p_x^{r} \pf.
\]
Denote by $u(t) = \po \mathbb P(\pi_t = \xi )\pf_{\xi\in \mathcal M^1_n(D)}$ the law of the empirical measure of the Fleming--Viot process (which depends on $r$, even though this is not indicated explicitly for the sake of notation) and define the expected limit of $u$ (see Equation~\eqref{eq:u}) by:
\[
u_0(t) = \sum_{x\in D} p_x^{r}(t) \mathbbm 1_x,
\]
which corresponds to the law of $\delta_{Y^r_t}$. Lemma~\ref{lem:topology} yields that that for all $t\geqslant 0$:
\[
L^*_{s,0} u_0(t) = 0.
\]

The goal now is to define $u_1$ as a solution to~\eqref{eq:Poisson} using Lemma~\ref{lem:poisson-eq}. In order to do so, let us show that \[\partial_t u_0 - L_m^*u_0 \in \po \mathrm{Ker} L_{s,0} \pf^{\perp}.\] Fix $x\in D$. First:
\[
\langle \partial_t u_0 ,\psi_x^r \rangle = \partial_t p_x^r.
\]
For all $f\in \ell^1(\mathcal M^1_n(D))$:
\[
L_m^*f(\xi) = \sum_{x\neq y} \po\po n\xi(y) + 1 \pf q(y,x) f\left(\xi +\frac{\delta_y - \delta_x}{n}\right) - n\xi(x)q(x,y)f\po \xi \pf\pf,
\]
hence for all $x\neq y$, $n\geqslant 3$:
\[
L_m^*u_0(\eta) = \begin{cases}
  -n\sum_{z\neq x} q(x,z)p^r_x & \text{if $\eta=\delta_x$,}\\
  nq(x,y)p^r_x & \text{if $\eta=\delta_{x;y}$,}\\
  0 & \text{otherwise.}
\end{cases}
\]
In the case $n=2$, the formula is adapted by $L_m^*u_0(\delta_{x;y})= nq(x,y)p^r_x + nq(y,x)p^r_y$.
Using Lemma~\ref{lem:game}, this yields that:
\begin{align*}
&\langle L^*_mu_0,\psi_x^r \rangle \\&= L_m^*u_0(\delta_x) \psi_x^r(\delta_x) + \sum_{y\neq x } \po L_m^*u_0(\delta_y) \psi_x^r(\delta_y) + L_m^*u_0(\delta_{x;y}) \psi_x^r(\delta_{x;y}) + L_m^*u_0(\delta_{y;x}) \psi_x^r(\delta_{y;x}) \pf \\ &= -n\sum_{z\neq x} q(x,z)p_x^{r} + \sum_{y\neq x} nq(x,y)p_x^{r} \alpha_{x,y,r}\frac{\alpha_{x,y,r}^{n-1}-1}{\alpha_{x,y,r}^n-1} + nq(y,x)p_y^{r} \alpha_{x,y,r}^{n-1}\frac{\alpha_{x,y,r}-1}{\alpha_{x,y,r}^n-1} \\ &= \sum_{y\neq x} \po -\tilde q_{r}(x,y)p_x^{r} + \tilde q_{r}(y,x)p_y^{r} \pf.
\end{align*}
Finally we get that for all $x\in D$:
\[
\langle \partial_t u_0 - L_m^*u_0,\psi_x^r \rangle = \partial_t p_x^{r} - \sum_{y\neq x} \po \tilde q_{r}(y,x)p_y^{r} - \tilde q_{r}(x,y)p_x^{r} \pf =  0.
\]
Lemma~\ref{lem:topology} then yields that $\partial_t u_0 - L_m^*u_0 \in \po \mathrm{Ker} L_{s,0} \pf^{\perp}$.
Thus, Lemma~\ref{lem:poisson-eq} yields that, for any $t \geqslant 0$, there exists $u_1(t) \in (\mathrm{Ker} L_{s,0})^\perp$ such that
\[
    \partial_t u_0(t) - L_m^*u_0(t) = L_{s,0}^* u_1(t),
\]
with the explicit representation
\[
    u_1(t) = \int_0^\infty \po \partial_t u_0(t) - L_m^*u_0(t)\pf e^{sL_{s,0}} \dd s.
\]
From this representation and the regularity of $u_0(t)$ provided by its definition, one gets that $u_1$ is a $\mathcal{C}^1$ function of $t$, and that $\partial_t u_1$ solves the Poisson equation
\[
    \partial^2_t u_0(t) - L_m^* \partial_t u_0(t) = L_{s,0}^* \partial_t u_1(t).
\]
As a conclusion, using Lemma~\ref{lem:poisson-eq} again, we deduce that
\[
\|u_1(t)\|_{1} \leqslant C \|\partial_t u_0(t) - L_m^*u_0(t)\|_{1},\quad \|\partial_t u_1(t)\|_{1} \leqslant C \|\partial_t^2 u_0(t) - L_m^*\partial_t u_0(t)\|_{1}.
\]
The Kolmogorov equations yield that $\|\partial_t u_0\|_{1}\leqslant Q$, and $\|\partial_t^2 u_0\|_{1}\leqslant Q^2$, and since $L_m^*$ is a bounded operator, we have that:
\begin{equation}\label{eq:bounds_u1}
\sup_{r\in\N}\sup_{t\geqslant 0} \|u_1(t)\|_{1} <\infty,\quad \sup_{r\in\N}\sup_{t\geqslant 0} \|\partial_t u_1(t)\|_{1} <\infty.
\end{equation}

Let us define $v$ by: 
\begin{equation}\label{eq:decompos_u}
u = u_0 + \frac{1}{\underline\lambda_r}u_1 + v.
\end{equation}
Then $v$ satisfies the following equation:
\[
\partial_t v = L^*v + \frac{1}{\underline\lambda_r} \po L^*_m u_1 - \partial_t u_1 \pf, \, v(0) = u(0) - u_0(0) -\frac{1}{\underline\lambda_r}u_1(0). 
\]
In the case where there exists $z\in D$ such that $\pi(\X_0)=\delta_z$, then $\tau=0$ so $Y^r_0=z$, and we have $v(0) = -u_1(0)/\underline\lambda_r$. Thus we can write:
\begin{equation}\label{eq:reste}
v(t) =-\frac{1}{\underline\lambda_r}e^{tL^*}u_1(0) + \frac{1}{\underline\lambda_r}\int_0^t e^{(t-s)L^*} \po L^*_m u_1 - \partial_t u_1 \pf \dd s,
\end{equation}
where for any $w\in\ell^1(\mathcal M_n^1(D))$, $\eta\in\mathcal M_n^1(D)$
\[
e^{tL^*}w(\eta)= \sum_{\xi\in\mathcal M^1_n(D)}w(\xi)\mathbb P_{\xi}\po \pi_t = \eta \pf.
\]
Thanks to the bounds on $\|u_1\|_1$ and $\|\partial_t u_1\|_1$, Equation~\eqref{eq:reste} yields that for any $T>0$, there exists $C>0$ such that:
\begin{equation}\label{eq:bounds_v}
\sup_{t \in [0,T]} \|v(t)\|_{1} \leqslant C/\underline\lambda_r.
\end{equation}
As a conclusion, from~\eqref{eq:decompos_u}, for any $t \in [0,T]$,
\[
\| \mathcal Law\po \pi(\X_t) \pf - \mathcal Law\po \delta_{Y_t^r} \pf\|_1 \leqslant \|v(t)\|_{1} + \frac{\|u_1(t)\|_{1}}{\underline\lambda_r},
\]
and the bounds~\eqref{eq:bounds_u1} and~\eqref{eq:bounds_v} thus yield the second statement of Theorem~\ref{thm:soft-kill-1}.

To prove the first statement, with an initial condition which is not necessarily a Dirac mass, we first write
\[
\| \mathcal Law\po \pi(\X_t) \pf - \mathcal Law\po \delta_{Y_t} \pf\|_1 \leqslant \| \mathcal Law\po \pi(\X_t) \pf - \mathcal Law\po \delta_{Y_t^r} \pf\|_1 + \| \mathcal Law\po \delta_{Y_t^r}\pf - \mathcal Law\po \delta_{Y_t} \pf\|_1,
\]
where $Y$ is the continuous-time Markov chain with initial condition $\eta_\infty$ given by Lemma~\ref{lem:initial-condi} and jump rates $\tilde{q}_\infty(x,y)$. For the first term of the right-hand side, we integrate in time the equation satisfied $v$ between  $1/\sqrt{\underline\lambda_r}$ and $t$ to get:
\[
v(t) = e^{(t-1/\sqrt{\underline\lambda_r})L^*}v\po \frac{1}{\sqrt{\underline\lambda_r}} \pf + \frac{1}{\underline\lambda_r}\int_{\frac{1}{\sqrt{\underline\lambda_r}}}^t e^{(t-s)L^*} \po L^*_m u_1 - \partial_t u_1 \pf \dd s.
\]
The second term of this last equality still goes to $0$ at speed $1/\underline\lambda_r$, and we are just left with showing that:
\[
v\po \frac{1}{\sqrt{\underline\lambda_r}} \pf = u\po \frac{1}{\sqrt{\underline\lambda_r}} \pf - u_0\po \frac{1}{\sqrt{\underline\lambda_r}} \pf -\frac{1}{\underline\lambda_r}u_1\po \frac{1}{\sqrt{\underline\lambda_r}} \pf \underset{r\rightarrow\infty}{\rightarrow} 0,
\]
in the $\ell^1$ distance.
Since $u_1$ is bounded in $\ell^1(\mathcal M_n^1(D))$, uniformly in $t\geqslant 0$ and $r\in \N$, we have that: 
\[
\lim_{r\rightarrow\infty}\left\|\frac{1}{\underline\lambda_r}u_1\po \frac{1}{\sqrt{\underline\lambda_r}}\pf\right\|_{1} = 0.
\]
Moreover, $Y^{r}$ is a continuous-time Markov chain with initial condition $Y_0^r$, where $Y_0^r$ is such that $\mathcal Law(\delta_{Y_0^r}) = \mathcal Law(\pi^0_\tau)$, and jump rates uniformly bounded with respect to $r$, hence $\underline\lambda_r\rightarrow\infty$ yields that:
\[
 \lim_{r\rightarrow\infty} \left\| \mathcal Law(\pi^0_\tau) - u_0\po \frac{1}{\sqrt{\underline\lambda_r}} \pf \right\|_{1} = 0.
\]
Let us now show that 
\[
\lim_{r\rightarrow\infty}\|u\po 1/\sqrt{\underline\lambda_r} \pf - \mathcal Law(\pi^0_\tau) \|_1 =0,
\]
even though $u(0) = \eta$ for all $r\in\N$. Write 
\begin{equation}
\tau_1 = \inf\left\{ t\geqslant 0, \pi_t \in \Delta \right\},
\end{equation}
and $S_1$ for the first jump of a particle due to the mutation mechanism. 
Using the notation of Lemma~\ref{lem:different-time-scale}, write $\Theta = {\rm supp}(\eta)$, $b_r = 1$, $L_r^2=L_m$, $L_r^1 = L_{s,0}^{{\rm supp}(\eta)} $, where 
\[
L_{s,0}^{{\rm supp}(\eta)}f(\xi) = \sum_{x,y\in {\rm supp}(\eta) } \alpha_r(x) \xi(x)\xi(y) \left( f\left(\xi +\frac{\delta_y - \delta_x}{n}\right) - f(\xi) \right)
\]
as well as $a_r =\underline\lambda_r$, and $c_r=\sqrt{\underline\lambda_r}$.
In this case, we have $\mathbb M^1 = \Delta$, and Lemma~\ref{lem:different-time-scale} yields that:
\begin{equation}\label{eq:conv-condi-init}
\lim_{r \rightarrow \infty} \mathbb P_{\eta}\po \tau_1 < \frac{1}{\sqrt{\underline\lambda_r}} < S_1 \pf = 1.
\end{equation}
For all $\xi\in\mathcal M^1_n(D)$, we may write:
\begin{align*}
    u\po \frac{1}{\sqrt{\underline\lambda_r}} \pf(\xi) &= \mathbb P_{\eta}\po \pi_{\frac{1}{\sqrt{\underline\lambda_r}}} =\xi, \tau_1 < \frac{1}{\sqrt{\underline\lambda_r}} < S_1 \pf +  \mathbb P_{\eta}\po \pi_{\frac{1}{\sqrt{\underline\lambda_r}}} =\xi, \po\tau_1 < \frac{1}{\sqrt{\underline\lambda_r}} < S_1 \pf^c\pf  \\ &= \mathbb P_{\eta}\po \pi^0_{\tau} =\xi, \tau_1 < \frac{1}{\sqrt{\underline\lambda_r}} < S_1 \pf +  \mathbb P_{\eta}\po \pi_{\frac{1}{\sqrt{\underline\lambda_r}}} =\xi, \po\tau_1 < \frac{1}{\sqrt{\underline\lambda_r}} < S_1 \pf^c\pf,
\end{align*}
and thus
\begin{align*}
    \|u\po 1/\sqrt{\underline\lambda_r} \pf - \mathcal Law(\pi^0_\tau) \|_1 \leqslant 2\mathbb P_{\eta} \po \po\tau_1 < \frac{1}{\sqrt{\underline\lambda_r}} < S_1 \pf^c \pf \underset{r\rightarrow\infty}{\rightarrow}0.
\end{align*}
Finally, Lemma~\ref{lem:initial-condi} and Lemma~\ref{lem:conv-Yr-to-Y} imply the convergence of $\| \mathcal Law\po \delta_{Y_t^r}\pf - \mathcal Law\po \delta_{Y_t} \pf\|_1$ towards $0$, and conclude the proof of Theorem~\ref{thm:soft-kill-1}.
\end{proof}

\section{Proof of Theorem~\ref{thm:conv-trajectories}}\label{sec:conv-trajectories}

We are now interested in the convergence of the trajectories, in the space $\mathcal D([0,T],\mathcal M^1(D))$ of $\mathcal M^1(D)$-valued cadlag paths. As stated in the introduction, there is no hope of having convergence in the Skorohod topology. Instead, we use the $L^1_{TV}$ topology. 
Let $F:\mathcal D([0,T],\mathcal M^1(D))\to\R$ be bounded and Lipschitz continuous for the $L^1_{TV}$ distance~\eqref{eq:distance-L1}. We decompose
\begin{align*}
    &\E \po F\po \po\pi(\X^r_t)\pf_{t\in [0,T]} \pf \pf - \E\po F \po \po \delta_{Y_t}\pf_{t\in [0,T]} \pf \pf \\
    &= \left(\E \po F\po \po\pi(\X^r_t)\pf_{t\in [0,T]} \pf \pf - \E\po F \po \po \delta_{Y_t^r}\pf_{t\in [0,T]} \pf \pf\right) \\
    &\quad + \left(\E\po F \po \po \delta_{Y_t^r}\pf_{t\in [0,T]} \pf \pf - \E\po F \po \po \delta_{Y_t}\pf_{t\in [0,T]} \pf \pf\right),
\end{align*}
with $\mathcal{L}aw(Y^r_0)=\mathcal{L}aw(\pi^0_\tau)$ and $\mathcal{L}aw(Y_0)=\eta_\infty$. Thus, the conclusion of Theorem~\ref{thm:conv-trajectories} follows from the combination of Lemmas~\ref{lem:conv-Yr-to-Y-traj} and~\ref{lem:coupling-traj}.

\begin{lem}\label{lem:conv-Yr-to-Y-traj}
    The process $(\delta_{Y_t^r})_{t\in [0,T]}$ converges in distribution, in $\mathcal D([0,T],\mathcal M^1(D))$, to the process $(\delta_{Y_t})_{t\in [0,T]}$.
\end{lem}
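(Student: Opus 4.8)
The plan is to build, for each $r$, a coupling of the jump process $Y^r$ (rates $\tilde q_r$, initial law with $\mathcal{L}aw(\delta_{Y_0^r})=\mathcal{L}aw(\pi^0_\tau)$) and the jump process $Y$ (rates $\tilde q_\infty$, initial law $\eta_\infty$) under which $Y^r_t=Y_t$ for \emph{all} $t\in[0,T]$ with probability tending to $1$. Since any two distinct Dirac masses lie at total variation distance $2$, the set $\Delta$ is discrete in $(\mathcal M^1(D),\|\cdot\|_{TV})$, so on the event $\{Y^r_t=Y_t,\ \forall t\in[0,T]\}$ the cadlag paths $(\delta_{Y^r_t})_{t\in[0,T]}$ and $(\delta_{Y_t})_{t\in[0,T]}$ coincide, hence are at Skorokhod distance zero; this will yield convergence in probability, a fortiori in distribution, in $\mathcal D([0,T],\mathcal M^1(D))$. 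Two elementary facts will be used: writing $h(\alpha)=(1+\alpha+\dots+\alpha^{n-1})^{-1}$ for $\alpha\in[0,\infty]$, with $h(\infty)=0$, one checks from~\eqref{eq:jump-rates-r}--\eqref{eq:jump-rates-lim} that $\tilde q_r(x,y)=nq(x,y)\,h(\alpha_{x,y,r})$ and $\tilde q_\infty(x,y)=nq(x,y)\,h(\alpha_{x,y,\infty})$; consequently $\tilde q_r(x,y),\tilde q_\infty(x,y)\leqslant nq(x,y)$, so both chains have total jump rate at most $\Lambda:=nQ$ and are non-explosive, and, by continuity of $h$ on $[0,\infty]$, Assumption~\ref{assu:assu1} and dominated convergence (domination by $nq(x,\cdot)$, summable by~\eqref{eq:assQlambda}), $\varepsilon_r(x):=\sum_{y\in D}|\tilde q_r(x,y)-\tilde q_\infty(x,y)|\to 0$ for each fixed $x\in D$.

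For the coupling, Lemma~\ref{lem:initial-condi} gives $\|\mathcal{L}aw(\delta_{Y_0^r})-\mathcal{L}aw(\delta_{Y_0})\|_{TV}\to 0$, so one may fix a maximal coupling of the initial states, under which $\mathbb P(Y_0^r\neq Y_0)\to 0$. On $\{Y_0^r=Y_0\}$ one runs $(Y^r,Y)$ as a single $D\times D$-valued pure jump process driven by a Poisson clock of candidate times of rate $\Lambda$: at a candidate time, if the two components sit at a common state $x$, one uses a single uniform random variable to realise a maximal coupling of the two decision laws $y\mapsto\tilde q_r(x,y)/\Lambda$ and $y\mapsto\tilde q_\infty(x,y)/\Lambda$ on $D$ (the residual mass being ``no move''), so that the two components choose differently with probability at most $\varepsilon_r(x)/\Lambda$; once the two components differ, they evolve independently. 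On $\{Y_0^r\neq Y_0\}$ they also evolve independently. By uniformization, the marginal laws are exactly those of $Y^r$ and $Y$.

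It then remains to bound the probability that the coupled components part before time $T$. A cadlag path valued in a discrete space visits finitely many states on a compact interval, so along any exhaustion of $D$ by finite sets one has $\mathbb P(\exists t\in[0,T]:\,Y_t\notin A)\to 0$; given $\delta>0$, fix a finite $A\subset D$ with $\mathbb P(\exists t\in[0,T]:\,Y_t\notin A)\leqslant\delta$. On $\{Y_0^r=Y_0\}$, if the components part the first such time $\sigma$ is a candidate time and $Y^r_{\sigma^-}=Y_{\sigma^-}=:x$; either $x\notin A$, which forces $Y$ to leave $A$ on $[0,T]$, or $x\in A$, in which case, conditionally on the components still agreeing and lying in $A$ just before that candidate time, the parting probability is at most $\varepsilon_r(x)/\Lambda\leqslant\bar\varepsilon_r(A)/\Lambda$ with $\bar\varepsilon_r(A):=\max_{x\in A}\varepsilon_r(x)$. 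Bounding the expected number of candidate times in $[0,T]$ by $\Lambda T$, a union bound gives
\[
\mathbb P\big(\exists t\in[0,T]:\ Y^r_t\neq Y_t\big)\ \leqslant\ \mathbb P(Y_0^r\neq Y_0)\ +\ \mathbb P\big(\exists t\in[0,T]:\ Y_t\notin A\big)\ +\ T\,\bar\varepsilon_r(A).
\]
Since $A$ is finite, $\bar\varepsilon_r(A)\to 0$, hence $\limsup_{r\to\infty}\mathbb P(\exists t\in[0,T]:\,Y^r_t\neq Y_t)\leqslant\delta$; letting $\delta\downarrow0$ produces the announced coupling and concludes.

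The one genuinely delicate point will be that $D$ need not be finite, so the rate convergence $\varepsilon_r(x)\to0$ is only pointwise in $x$ and no uniform-in-$x$ coupling is available; the localisation above is what circumvents it, exploiting that cadlag $\Delta$-valued paths visit finitely many states and hence that essentially all the mass of the limit trajectory — and, through the coupling, of $Y^r$ for large $r$ — is carried by trajectories confined to a fixed finite set. Alternatively, one could prove tightness of $(\delta_{Y^r})_r$ in $\mathcal D([0,T],\mathcal M^1(D))$ from the uniform rate bound $\Lambda$ and identify the limit through the finite-dimensional distributions provided by Lemma~\ref{lem:conv-Yr-to-Y} and the Markov property, but the coupling argument is shorter and meshes with what is needed for Lemma~\ref{lem:coupling-traj}.
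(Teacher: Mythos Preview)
Your proof is correct, but it follows a genuinely different route from the paper's. The paper argues in two steps: first, it invokes \cite[Theorem~19.25]{Kal02} (exactly as in Lemma~\ref{lem:conv-Yr-to-Y}) to obtain convergence in distribution of $(Y^r_t)_{t\in[0,T]}$ to $(Y_t)_{t\in[0,T]}$ in the Skorohod space $\mathcal D([0,T],D)$ with $D$ equipped with the discrete metric; second, it checks that the map $(y_t)_{t\in[0,T]}\mapsto(\delta_{y_t})_{t\in[0,T]}$ is continuous from $\mathcal D([0,T],D)$ (with the $J_1$-topology) to $\mathcal D([0,T],\mathcal M^1(D))$ (with the $L^1_{TV}$ metric), using that $J_1$ convergence implies convergence for almost every $t$ and then dominated convergence. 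Your approach instead builds an explicit uniformization coupling under which $Y^r$ and $Y$ coincide on all of $[0,T]$ with probability tending to $1$, and concludes directly.

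Each approach has its merits. Yours is self-contained and does not rely on a black-box convergence theorem; it actually yields the stronger conclusion that the laws of the paths converge in total variation, not merely weakly. The localisation step (confining $Y$ to a finite set with high probability) is the right device to cope with the lack of uniformity in $x$ of the rate convergence when $D$ is infinite, and your union bound $T\,\bar\varepsilon_r(A)$ is correct via the Poisson-clock decomposition. The paper's proof is shorter and more conceptual once one is willing to quote Kallenberg. One cosmetic remark: you write ``Skorokhod distance zero'', but the ambient topology on $\mathcal D([0,T],\mathcal M^1(D))$ here is the $L^1_{TV}$ one; this is harmless, since exact coincidence of the paths gives distance zero in either metric.
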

\begin{proof}
    Let us endow the set $D$ with the distance $\rho(x,y)=\mathbbm{1}_{x\not=y}$, and the set $\mathcal{D}([0,T],D)$ of cadlag trajectories $[0,T] \to D$ with the associated Skorohod $J_1$-topology.  By~\cite[Theorem~19.25, p.~385]{Kal02} (see the proof of Lemma~\ref{lem:conv-Yr-to-Y}), the process $(Y^r_t)_{t \in [0,T]}$ converges in distribution to $(Y_t)_{t \in [0,T]}$ in this space. To conclude, we now check that the mapping
    \[
        \begin{array}{rcl}
            \mathcal{D}([0,T],D) & \to & \mathcal D([0,T],\mathcal M^1(D))\\
            (y_t)_{t \in [0,T]} & \mapsto & (\delta_{y_t})_{t \in [0,T]}
        \end{array}
    \]
    is continuous. Let $((y^k_t)_{t \in [0,T]})_{k \geqslant 1}$ be a sequence of elements of $\mathcal{D}([0,T],D)$ which converges to $(y_t)_{t \in [0,T]}$ in the $J_1$-topology. We have
    \[
        \|(\delta_{y^k_t})_{t \in [0,T]} - (\delta_{y_t})_{t \in [0,T]}\|_{L^1_{TV}} = 2\int_0^T \mathbbm{1}_{y^k_t \not= y_t}\dd t = 2 \int_0^T \rho(y^k_t, y_t)\dd t.
    \]
    By~\cite[p.~125]{Billingsley}, the convergence of $(y^k_t)_{t \in [0,T]}$ in the $J_1$-topology implies the convergence of $\rho(y^k_t,y_t)$ to $0$ for almost every $t \in [0,T]$, and therefore the conclusion follows from the dominated convergence theorem.
\end{proof}

\begin{lem}\label{lem:coupling-traj}
    There exists a coupling of the processes $\po\pi(\X^r_t)\pf_{t\in [0,T]}$ and $\po \delta_{Y_t^r}\pf_{t\in [0,T]}$ such that
\[
\lim_{r\rightarrow\infty} \mathbb{E}\left[\int_0^T \|\pi(\X^r_t)-\delta_{Y_t^r}\|_{TV} \, \dd t\right] = 0.
\] 
\end{lem}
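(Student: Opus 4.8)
The point is that $Y^r$ should be coupled to $\pi(\X^r)$ so as to agree with it outside a random time set of vanishing expected Lebesgue measure. Recall from Lemma~\ref{lem:game} that, starting from a Dirac mass $\delta_x$, a mutation sends the configuration to $\delta_{x;y}$ at rate $nq(x,y)$, after which the selection dynamics generated by $L_{s,0}$ is absorbed at $\delta_y$ with probability $1-\psi^r_x(\delta_{x;y}) = (\alpha_{x,y,r}-1)/(\alpha_{x,y,r}^n-1)$, so that the effective rate of a transition $\delta_x \to \delta_y$ is exactly $\tilde q_r(x,y)$, and note that $nq(x,y)-\tilde q_r(x,y) = nq(x,y)\bigl(1-(\alpha_{x,y,r}-1)/(\alpha_{x,y,r}^n-1)\bigr) \geqslant 0$. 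I would therefore build the following \emph{thinned} coupling, to be used whenever $\pi(\X^r_t)$ sits at a Dirac mass $\delta_x$ with $Y^r_t = x$: for each $y$, run two independent Poisson clocks, a \emph{successful} one of rate $\tilde q_r(x,y)$ and a \emph{failed} one of rate $nq(x,y)-\tilde q_r(x,y)$; in both cases the ringing triggers the mutation $\delta_x \to \delta_{x;y}$ of the particle system, but when the successful clock rings, $Y^r$ jumps to $y$ at that instant and the ensuing excursion of the selection dynamics is run conditioned, by a Doob $h$-transform, to be absorbed at $\delta_y$, whereas when the failed clock rings $Y^r$ stays at $x$ and the selection excursion is conditioned to return to $\delta_x$. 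Summing rates, the particle system thus produced is the Fleming--Viot process (the conditional absorption probabilities of the excursions being correct by Lemma~\ref{lem:game}), while $Y^r$ is genuinely the Markov chain with rates $\tilde q_r$, since its holding times are exponential with the correct parameters.

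\textbf{Initialisation.} On $[0,\tau_1)$, where $\tau_1 = \inf\{t \geqslant 0 : \pi(\X^r_t) \in \Delta\}$, I would let $\X^r$ run from $\eta$ and $Y^r$ run freely from an initial value of law $\mathcal{L}aw(\pi^0_\tau)$. By~\eqref{eq:conv-condi-init} and the arguments in the proof of Theorem~\ref{thm:soft-kill-1}, with probability tending to $1$ one has $\tau_1 < 1/\sqrt{\underline\lambda_r} < S_1$, $Y^r$ has not jumped before $\tau_1$, and the site $Z$ with $\pi(\X^r_{\tau_1}) = \delta_Z$ has a law at total variation distance $o(1)$ from $\mathcal{L}aw(\pi^0_\tau) = \mathcal{L}aw(Y^r_0)$; hence one may further couple $Y^r_0$ to $Z$ (via a maximal coupling, using $Z$ and independent randomness) so that $Y^r_{\tau_1} = Y^r_0 = Z$ with probability $1-o(1)$, after which the two processes are in sync at a Dirac mass and I switch on the thinned coupling above. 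Whenever they fall out of sync afterwards — which can occur only through a mutation of the particle system firing during a selection excursion, an event I will call bad — I simply let them evolve with independent dynamics thereafter, which does not affect the (correct) marginal law of $Y^r$.

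\textbf{Estimates.} With this coupling, $\pi(\X^r_t) = \delta_{Y^r_t}$ for every $t \in [0,T]$ outside the union of $[0,\tau_1)$, the selection excursions started after $\tau_1$ (during which $\pi(\X^r_t) \notin \Delta$), and, if the bad event occurs, the interval from that instant to $T$. Since $\|\mu - \nu\|_{TV} \leqslant 2$ always and the penultimate set is contained in $\{t \in [0,T] : \pi(\X^r_t) \notin \Delta\}$, this yields
\[
\mathbb{E}\!\left[\int_0^T \|\pi(\X^r_t)-\delta_{Y^r_t}\|_{TV}\,\dd t\right] \leqslant 2\,\mathbb{E}[\tau_1 \wedge T] + 2\int_0^T \mathbb{P}\bigl(\pi(\X^r_t)\notin\Delta\bigr)\,\dd t + 2T\,\mathbb{P}(\text{bad event or failed initial synchronisation}).
\]
The first term vanishes because $\tau_1 \leqslant T$ and, by the exponential bound in Lemma~\ref{lem:different-time-scale}, $\tau_1 \to 0$ in probability. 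The second vanishes by dominated convergence: by the first statement of Theorem~\ref{thm:soft-kill-1}, $\mathcal{L}aw(\pi(\X^r_t))$ converges for each fixed $t>0$ to $\mathcal{L}aw(\delta_{Y_t})$, which is carried by $\Delta$, so $\mathbb{P}(\pi(\X^r_t)\notin\Delta) \to 0$. For the last term, the failed-synchronisation probability is $o(1)$ by the preceding paragraph, while the expected number of mutations occurring during a (coupled) selection excursion in $[0,T]$ is at most $nQ\int_0^T \mathbb{P}(\pi(\X^r_t)\notin\Delta)\,\dd t$, since the total mutation rate is bounded by $nQ$; this tends to $0$, so the bad event has vanishing probability by Markov's inequality. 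This proves the lemma.

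\textbf{Main obstacle.} The delicate point is the design of the coupling: one must reconcile the requirement that $Y^r$ be \emph{exactly} the jump Markov chain with rates $\tilde q_r$ (hence with exponential holding times and jumps occurring instantaneously at mutation instants) with the fact that in the particle system a transition between Dirac masses takes a short but positive time of order $1/\underline\lambda_r$ whose outcome is only revealed at the end of the selection excursion. The splitting of mutations into successful and failed clocks, together with the $h$-transform of the excursions, is precisely what makes this compatible; everything else reduces to Theorem~\ref{thm:soft-kill-1}, Lemma~\ref{lem:different-time-scale} and Lemma~\ref{lem:initial-condi}.
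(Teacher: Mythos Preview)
Your thinning/$h$-transform idea is natural and different from the paper's construction, but there is a genuine gap in the coupling exactly at the point you flag as the ``main obstacle''. You say the successful/failed splitting makes $Y^r$ a Markov chain with rates $\tilde q_r$, but this is only true if the successful clocks of rate $\tilde q_r(x,\cdot)$ are running \emph{continuously} while $Y^r$ is at $x$. In your description the thinned coupling is ``used whenever $\pi(\X^r_t)$ sits at a Dirac mass $\delta_x$ with $Y^r_t=x$'', so during an excursion the successful clocks are implicitly paused. Then the holding time of $Y^r$ at $y$, after a successful jump $x\to y$, equals the excursion length plus an exponential, which is not exponential --- so $Y^r$ does \emph{not} have the claimed marginal law. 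If instead you let $Y^r$'s clocks run during excursions (so that its marginal is correct), you create a second way to fall out of sync: $Y^r$ may jump while $\pi$ is still traversing the excursion and has not yet reached $\delta_y$. This is not the ``bad event'' you list (a \emph{particle-system} mutation during the excursion); it is a $Y^r$-jump during the excursion, and you never bound its probability. The fix is easy --- its expected count is at most $nQ\int_0^T\mathbb P(\pi(\X^r_t)\notin\Delta)\,\dd t$, the same quantity you already control --- but as written the argument is incomplete precisely where you claim the $h$-transform has resolved the difficulty.

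The paper sidesteps this issue by a time change rather than a thinning: it sets $\delta_{Y^{r,1}_t}=\bar\eta_{s(t)}$ where $s(t)$ is the right inverse of the time spent by $\pi$ in $\Delta$ and $\bar\eta$ records the last Dirac value. Deleting the excursion intervals makes the holding times of $Y^{r,1}$ exactly the Dirac-sojourn times $\sigma_{k+1}-\tau_k$, which on the event $\mathcal A_r(T)$ (no mutation during any excursion) are genuine exponentials, and the excursion outcomes have the Gambler's-ruin probabilities; so on $\mathcal A_r(T)$ one gets the correct chain without any $h$-transform. The price is a separate estimate (Steps~4--5) comparing $\bar\eta_{s(t)}$ to $\bar\eta_t$, i.e.\ controlling the cumulative time shift $s(t)-t=\sum_k(\tau_k-\sigma_k)$, which your approach avoids. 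Your use of Theorem~\ref{thm:soft-kill-1} to get $\int_0^T\mathbb P(\pi(\X^r_t)\notin\Delta)\,\dd t\to 0$ is a nice shortcut the paper does not take (it bounds excursion lengths directly via Lemma~\ref{lem:different-time-scale}); that part is fine.
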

\begin{proof}
Fix $T>0$. The proof of Lemma~\ref{lem:coupling-traj} is divided into five steps. The first one is dedicated the definition of the coupling.

\noindent \emph{Step~1.} Let us recall the notation~\eqref{eq:empiri-meas} and introduce $\sigma_0 = 0$,
\[
\tau_k = \inf\left\{t\geqslant \sigma_k, \pi_t \in \Delta\right\},
\]
and
\[
\sigma_{k+1} = \inf\left\{t\geqslant \tau_k, \pi_t \notin \Delta\right\},
\]
for all $k\in \N$. We define an intermediate process, which is not Markov: 
\[
\bar \eta_t = \pi_{\tau_{N(t)}}, \quad N(t) = \max\left\{ k, \tau_k \leqslant t \right\},
\]
and for $t\in [0,\tau_0]$, write $\bar \eta_t = \pi_{\tau_0}$. This is a process living in $\Delta$, and we will get a Markov process by a time re-scaling and conditioning. Define the change of time:
\[
s(t) = \inf\left\{s\geqslant 0, \int_0^s \mathbbm 1_{\pi_u\in \Delta} \dd u > t \right\}.
\]
Write $\tilde N(t) = \max\left\{ k, \sigma_k \leqslant t \right\}$, and
\begin{multline*}
\mathcal A_r(T) \\= \left\{ \text{For all }k\in \llbracket 0,\tilde N(T)\rrbracket, \text{ there are no jumps from the mutation mechanism between }\sigma_k\text{ and }\tau_k  \right\}.
\end{multline*}
If $\pi_{\tau_k}= \delta_x$, then $\sigma_{k+1}-\tau_k$ is an exponential random variable with rate $n\sum_{y\neq x}q(x,y)$. Additionally, on $\mathcal A_r(T)$, Lemma~\ref{lem:game} yields that the Dirac mass goes from $x$ to $y$ with probability
\[
\frac{q(x,y)}{\sum_{y\neq x}q(x,y)}\alpha_{y,x,r}^{n-1}\frac{\alpha_ {y,x,r}-1}{\alpha_{y,x,r}^n-1}.
\]
Hence, writing
\[
\delta_{Y^{r,1}_t } = \bar \eta_{s(t)},
\]
we have that on $\mathcal A_r(T)$, $Y^{r,1}$ is exactly a continuous-time Markov chain with jump rates $\tilde q_{r}$ and initial condition $\pi_{\tau_0}$. Let $Y^{r,2}$ be another continuous-time Markov chain on $D$ with jump rates $\tilde q_{r}$, initial condition with distribution $\mathcal{L}aw(\pi^0_\tau)$, independent of the Fleming--Viot process, and write:
\[
Y^{r} = Y^{r,1}\mathbbm 1_{\mathcal A_r(T)} + Y^{r,2}\mathbbm 1_{\mathcal A^c_r(T)}.
\]
The process $Y^{r}$ is a jump Markov process with jump rates $\tilde q_{r}$ and initial condition with distribution $\mathcal{L}aw(\pi^0_\tau)$. In Step~2, we show that the probability of $\mathcal{A}_r(T)$ converges to $1$. In Step~3, we show that $\pi$ is close to $\bar \eta$, and finally in Steps~4 and ~5, we show that $\bar\eta$ is close to $Y^{r,1}$ in the $L^1_{TV}$ distance defined in~\eqref{eq:distance-L1}, and thus to $Y^r$.

\medskip
\noindent \emph{Step~2.} Let us show that for all $T>0$, 
\[
\lim_{r\rightarrow\infty} \mathbb P\po \mathcal A_r(T) \pf = 1.
\]
For any $k \geqslant 0$, the two events $\left\{ \tilde N(T)\geqslant k \right\}$ and $\left\{ \text{there is a mutation jump between }\sigma_k \text{ and }\tau_k\right\}$ are independent. Thus we have:
\begin{align*}
    \mathbb P&\po \mathcal A_r(T)^c \pf \\&\leqslant  \sum_{k=0}^{\infty} \mathbb P\po \tilde N(T) \geqslant k, \text{ there is a mutation jump between }\sigma_k \text{ and }\tau_k \pf \\ & \leqslant \sup_{k\in \N}\mathbb P\left( \text{there is a mutation jump between }\sigma_k \text{ and }\tau_k \right) \sum_{k=0}^{\infty} \mathbb P\po \tilde N(T) \geqslant k \pf.
\end{align*}
Because $\sigma_{k+1}-\tau_k$ is an exponential random variable with parameter bounded by $nQ$ (recall the definition~\eqref{eq:assQlambda} for $Q$), $\tilde N(T)$ is bounded by a Poisson random variable with parameter $nQT$, and hence:
\[
\sum_{k=0}^{\infty} \mathbb P\po \tilde N(T) \geqslant k \pf = \E\po \tilde N(T) \pf \leqslant nQT.
\]
The convergence~\eqref{eq:time-scale} and the strong Markov property at time $\sigma_k$ yield that
\[
\lim_{r\rightarrow\infty} \sup_{k\in \N}\mathbb P\left( \text{there is a mutation jump between }\sigma_k \text{ and }\tau_k \right) = 0,
\]
which concludes the proof.

\medskip
\noindent \emph{Step~3.} Let us now prove that the expected $L^1_{TV}$ distance between $\pi$ and $\bar \eta$ goes to $0$ when $r$ goes to infinity.
We have:
\[
\|\pi-\bar \eta\|_{L^1_{TV}}\mathbbm 1_{\mathcal{A}_r(T)} \leqslant 2 \mathbbm 1_{\mathcal{A}_r(T)}\sum_{k=0}^{\tilde N(T)} \bar \tau_k - \sigma_k \leqslant 2\sum_{k=0}^{\infty} \po \bar \tau_k - \sigma_k\pf \mathbbm 1_{\sigma_k \leqslant T}, 
\]
where 
\begin{equation}\label{eq:new-death}
\bar \tau_k = \inf\left\{t\geqslant \sigma_k, \bar \pi^k_t \in \Delta\right\},
\end{equation}
and $\bar \pi^k$ is equal to $\pi$ on $[0,\sigma_k]$, and on $[\sigma_k,\infty]$, $\bar \pi^k$ is a Markov process with generator $L_{s}$, constructed with the same exponential variables as the ones used for the selection mechanism in the dynamics of $\pi$. The application of Lemma~\ref{lem:different-time-scale} to $(\bar\pi^k_{t+\sigma_k})_{t\geqslant 0}$ yields that there exist $C,c>0$, independent of $k\in\N$, such that for all $t\geqslant 0$:
\[
\mathbb P\po \bar \tau_k - \sigma_k >t \middle| \sigma_k \leqslant T  \pf \leqslant Ce^{-c\underline \lambda_r t},
\]
yielding that
\[
\lim_{r\rightarrow\infty} \sup_{k\in \N}\E\po  \bar \tau_k - \sigma_k  \middle| \sigma_k \leqslant T \pf = 0.
\]
Using again that $\tilde N(T)$ is bounded by a Poisson random variable with parameter $nQT$ we get:
\begin{align*}
\E\po \|\pi-\bar \eta\|_{L^1_{TV}} \pf &\leqslant 2T\mathbb P\po \mathcal A_r(T)^c \pf +  2\sup_{k\in \N}\E\po \bar \tau_k - \sigma_k \middle| \sigma_k \leqslant T \pf \sum_{k\geqslant 0} \mathbb P\po \tilde N(T) \geqslant k \pf \\ &= 2T\mathbb P\po \mathcal A_r(T)^c \pf +  2\sup_{k\in \N}\E\po \bar \tau_k - \sigma_k \middle| \sigma_k \leqslant T \pf \E(\tilde N(T)) \\ &\leqslant 2T\mathbb P\po \mathcal A_r(T)^c \pf +  2n Q T \sup_{k\in \N}\E\po \bar\tau_k - \sigma_k \middle|\sigma_k \leqslant T \pf \underset{r \rightarrow \infty}{\rightarrow} 0,
\end{align*}
where we used the result of Step~2 to control the term $\mathbb P\po \mathcal A_r(T)^c \pf$.

\medskip
\noindent \emph{Step~4.} In order to show that the distance between $\bar \eta$ and  $(\delta_{Y^{r,1}_t})_{0\leqslant t \leqslant T}$ goes to zero as well, let us define the event
\[
\mathcal B_r(T) = \left\{ \sum_{k=0}^{\tilde N(T)}\tau_k - \sigma_k < \min_{0\leqslant k \leqslant \tilde N(T)} \sigma_{k+1}-\tau_k \right\},
\]
and show that
\[
\lim_{r\rightarrow\infty} \mathbb P\po \mathcal A_r(T), \mathcal B_r(T) \pf = 1.
\]
Fix $\varepsilon>0$, and $N_0\in \N$ such that $\mathbb P\po \tilde N(T) >N_0\pf <\varepsilon$. Then
\[
\mathbb P\po \mathcal B_r(T)^c,\mathcal A_r(T) \pf \leqslant \varepsilon + \mathbb{P}\po \sum_{k=0}^{N_0}\tau_k - \sigma_k > \min_{0\leqslant k \leqslant  N_0} \sigma_{k+1}-\tau_k ,\mathcal A_r(T) \pf.
\]
The random variable $\min_{0\leqslant k \leqslant  N_0} \sigma_{k+1}-\tau_k$ is lower bounded by a exponential random variable $E$ with parameter $(N_0+1)nQ$, independent of the family of random variable $(\tau_k-\sigma_k)_{0\leqslant k \leqslant N_0}$. This implies that
\begin{align*}
    \mathbb{P}\po \sum_{k=0}^{N_0}\tau_k - \sigma_k > \min_{0\leqslant k \leqslant  N_0} \sigma_{k+1}-\tau_k ,\mathcal A_r(T) \pf &\leqslant \mathbb{P}\po \exists k\in \cco 0,N_0\ccf,\, \bar\tau_k - \sigma_k > E/(N_0+1) \pf \\&\leqslant (N_0+1) \sup_{k\in\N}\mathbb{P}\po\bar \tau_k - \sigma_k > E/(N_0+1) \pf.
\end{align*}
By conditioning on $E$, Equation~\eqref{eq:bound-death-time} then yields
\[
\sup_{k\in\N}\mathbb{P}\po\bar \tau_k - \sigma_k > E/(N_0+1) \pf \leqslant C\E\po e^{-c\underline \lambda_r E/(N_0+1)} \pf = C \frac{nQ(N_0+1)^2}{nQ(N_0+1)^2 + c\underline \lambda_r} \underset{r\rightarrow\infty}{\rightarrow}0,
\]
so that
\[
\limsup_{r\rightarrow\infty} \mathbb P\po \mathcal A_r(T), \mathcal B_r(T)^c \pf \leqslant \varepsilon,
\]
for all $\varepsilon>0$, which concludes Step~4.

\medskip
\noindent \emph{Step~5.} 
By the result of Step~3, all is left to do is to show that 
\[
\lim_{r\rightarrow\infty} \|\bar \eta -(\delta_{Y^{r}_t})_{0\leqslant t \leqslant T}\|_{L^1_{TV}} = 0.
\]
To proceed we shall use the result of Step~4. First notice that for all $s\geqslant 0$
\[
\int_0^s \mathbbm 1_{\pi_u\in \Delta} \dd u  = s - \sum_{k=0}^{\tilde N(s)} s\wedge \tau_k - \sigma_k,
\]
so that
\[
s(t) = t + \sum_{k=0}^{\tilde N(s(t))} \tau_k - \sigma_k \geqslant t.
\]
Additionally, $\delta_{Y^{r,1}_t} \neq \bar \eta_t$ only if there is $p\in \N$ such that $t \leqslant \sigma_p <s(t)$, and on the event $\mathcal B_r(T)$, $t \leqslant \sigma_p < s(t)$ implies that $\tilde N(s(t)) = p$. Hence 
\[
\sigma_p - \sum_{k=0}^p \tau_k - \sigma_k\leqslant t \leqslant \sigma_p . 
\]
Hence the total amount of time where $\delta_{Y^{r,1}} \neq \bar \eta$ is bounded as follow
\[
\mathbbm 1_{\mathcal A_r(T),\mathcal B_r(T)} \int_0^T \mathbbm 1_{\delta_{Y^{r,1}_t} \neq \bar \eta_t} \dd t \leqslant \mathbbm 1_{\mathcal A_r(T),\mathcal B_r(T)}\sum_{p=0}^{\tilde N(T)}\sum_{k=0}^p \tau_k - \sigma_k \leqslant \sum_{k=0}^{\tilde N(T)} (\tilde N(T)-k)\po \bar \tau_k - \sigma_k\pf,
\]
where $\bar \tau_k$ was defined in~\eqref{eq:new-death}. We have:
\[
\|\delta_{Y^{r}} - \bar \eta \|_{L^1_{TV}}\mathbbm 1_{\mathcal A_r(T)}\mathbbm 1_{\mathcal B_r(T)} = \|\delta_{Y^{r,1}} - \bar \eta \|_{L^1_{TV}}\mathbbm 1_{\mathcal A_r(T)}\mathbbm 1_{\mathcal B_r(T)} \leqslant 2\sum_{k=0}^{\infty} \tilde N(T) \po \bar \tau_k - \sigma_k\pf \mathbbm 1_{\sigma_k \leqslant T}.
\]
Using Cauchy-Schwarz inequality, one gets:
\begin{multline*}
\E\po\sum_{k=0}^{\infty} \tilde N(T) \po \bar \tau_k - \sigma_k\pf \mathbbm 1_{\sigma_k \leqslant T} \pf \leqslant \sum_{k=0}^{\infty} \E\po \tilde N(T)^2\pf^{\frac{1}{2}} \E\po \po \bar \tau_k - \sigma_k\pf^2 \mathbbm 1_{\sigma_k \leqslant T}\pf^{\frac{1}{2}} \\\leqslant \E\po\tilde N(T)^2\pf^{\frac{1}{2}} \po \sup_{k\in \N}\E\po \po\bar\tau_k - \sigma_k\pf^2 \middle|\sigma_k \leqslant T \pf\pf^{\frac{1}{2}} \sum_{k\in\N}\sqrt{\mathbb P\po \tilde N(T)\geqslant k\pf}.
\end{multline*}
Write $\mathfrak N$ for a Poisson random variable with parameter $nQT$. Since $\tilde N(T)$ is bounded by $\mathfrak N$, by the Markov inequality we have that
\[
\sup_{r\in\N}\sum_{k\in\N}\sqrt{\mathbb P\po \tilde N(T)\geqslant k\pf} \leqslant \sqrt{\E\po \mathfrak N^4 \pf}\sum_{k\in\N}\frac{1}{k^2}<\infty,
\]
and hence:
\begin{multline*}
\E \po \|\delta_{Y^{r}} - \bar \eta \|_{L^1_{TV}} \pf \\ \leqslant 2T\mathbb P\po \po \mathcal A_r(T)\cap B_r(T)\pf^c \pf + 2 \sup_{k\in \N}\E\po \po\bar\tau_k - \sigma_k\pf^2 \middle|\sigma_k \leqslant T \pf^{\frac{1}{2}}\E\po\mathfrak N^2\pf^{\frac{1}{2}} \E\po\mathfrak N^4\pf^{\frac{1}{2}}\pi^2/6.
\end{multline*}
This yields that
\[
\lim_{r\rightarrow \infty} \E \po \|\delta_{Y^{r}} - \bar \eta \|_{L^1_{TV}} \pf= 0,
\]
where we used Step~4 to control the first term, which concludes the proof. 
\end{proof}

\section{Proof of Theorem~\ref{thm:soft-kill-2}}\label{sec:rate-bounded-var}

The aim of this section is to prove Theorem~\ref{thm:soft-kill-2}, and we therefore suppose that Assumption~\ref{assu:assu2} holds. In this case, we can write:
\[
\lambda_r(x) = \underline\lambda_r + m_r(x),
\]
where 
$m_r\geqslant 0$ and
\[
\|m\|_{\infty} := \sup_{r\in\N} \sup_{x\in D} m_r(x) < \infty.
\]
We start by showing that $\pi_t$ converges to a Dirac mass. Let us recall the notation~\eqref{eq:empiri-meas}: for all $t \ge0$ and $z\in D$,
\begin{equation}
 \pi_t(z) =\pi(\X_t^r)(z).
\end{equation}

\begin{lem}\label{lem:high-corr}
Under Assumption~\ref{assu:assu2}, we have for all $t>0$:
\[
\lim_{r\rightarrow \infty}\sum_{x\neq y\in D}\E(\pi_t(x)\pi_t(y)) \rightarrow 0, \qquad \lim_{r\rightarrow \infty} \E( \|\pi_t\|_{\infty}) = 1.
\]
Moreover, if $\pi(\X_0)=\delta_z$ for some $z\in D$ and all $r\in\N$, then:
\[
\sum_{x\neq y\in D}\E(\pi_t(x)\pi_t(y)) \leqslant \po Q +\frac{n_r}{2(n_r-1)}\|m\|_{\infty}\pf \frac{n_r}{\underline\lambda_r}, 
\]
and
\[
\E\po \|\pi_t\|_{\infty} \pf \geqslant 1 - \po Q + \frac{n_r}{2(n_r-1)}\|m\|_{\infty} \pf\frac{n_r}{\underline\lambda_r}.
\]
\end{lem}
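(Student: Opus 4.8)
The plan is to control the single scalar functional $\varphi(\xi):=\sum_{x\neq y}\xi(x)\xi(y)=1-\sum_{x\in D}\xi(x)^2$, since $\sum_{x\neq y}\E(\pi_t(x)\pi_t(y))=\E[\varphi(\pi_t)]$ pathwise, and since the elementary inequality $\|\pi_t\|_\infty=\max_x\pi_t(x)\geqslant\sum_x\pi_t(x)^2=1-\varphi(\pi_t)$ gives $\E[\|\pi_t\|_\infty]\geqslant 1-\E[\varphi(\pi_t)]$ (together with $\|\pi_t\|_\infty\leqslant 1$). So both displays in the lemma follow from an upper bound on $\E[\varphi(\pi_t)]$. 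Because $\varphi$ is bounded and $L$ is a bounded operator on $\ell^\infty(\mathcal M^1_n(D))$, the forward Kolmogorov equation gives $\frac{\dd}{\dd t}\E[\varphi(\pi_t)]=\E[L\varphi(\pi_t)]=\E[L_m\varphi(\pi_t)]+\E[L_s\varphi(\pi_t)]$, and I would first record the elementary increment $\varphi\!\left(\xi+\tfrac{\delta_y-\delta_x}{n}\right)-\varphi(\xi)=\tfrac2n(\xi(x)-\xi(y))-\tfrac2{n^2}$.

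The heart of the proof is the computation of $L_s\varphi$. Writing $\lambda_r(x)=\underline\lambda_r+m_r(x)$, inserting the increment, and using the antisymmetry identity $\sum_{x\neq y}\xi(x)\xi(y)(\xi(x)-\xi(y))=0$, one obtains the exact formula
\[
L_s\varphi(\xi)=-\frac{2\underline\lambda_r}{n-1}\,\varphi(\xi)+\frac{2n}{n-1}\,\mathrm{Cov}_\xi\big(m_r(\cdot),\xi(\cdot)\big)-\frac{2}{n-1}\sum_{x\in D}\xi(x)m_r(x)(1-\xi(x)),
\]
where $\mathrm{Cov}_\xi$ is the covariance of the two functions $x\mapsto m_r(x)$ and $x\mapsto\xi(x)$ under the probability measure $\xi$. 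The last term is nonnegative and is dropped for the upper bound; since $m_r$ ranges in $[0,\|m\|_\infty]$ and $\xi(\cdot)$ in $[0,1]$, Popoviciu's variance inequality and Cauchy--Schwarz give $|\mathrm{Cov}_\xi(m_r(\cdot),\xi(\cdot))|\leqslant\tfrac12\|m\|_\infty\cdot\tfrac12=\tfrac14\|m\|_\infty$, hence the second term is at most $\tfrac{n}{2(n-1)}\|m\|_\infty$ in absolute value. On the mutation side, inserting the increment and discarding nonpositive contributions yields $L_m\varphi(\xi)=2\sum_{x\neq y}\xi(x)q(x,y)(\xi(x)-\xi(y))-\tfrac2n\sum_{x\neq y}\xi(x)q(x,y)\leqslant 2\sum_{x\neq y}\xi(x)^2q(x,y)\leqslant 2Q$. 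Altogether,
\[
\frac{\dd}{\dd t}\E[\varphi(\pi_t)]\leqslant-\frac{2\underline\lambda_r}{n_r-1}\,\E[\varphi(\pi_t)]+2Q+\frac{n_r}{2(n_r-1)}\|m\|_\infty .
\]

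The conclusion is then a one-dimensional linear Grönwall estimate. If $\pi(\X_0^r)=\delta_z$ then $\varphi(\pi_0)=0$, so integrating gives $\E[\varphi(\pi_t)]\leqslant\frac{n_r-1}{2\underline\lambda_r}\big(2Q+\frac{n_r}{2(n_r-1)}\|m\|_\infty\big)=\frac{Q(n_r-1)}{\underline\lambda_r}+\frac{n_r\|m\|_\infty}{4\underline\lambda_r}$, which is bounded above by $\big(Q+\frac{n_r}{2(n_r-1)}\|m\|_\infty\big)\frac{n_r}{\underline\lambda_r}$ (using $n_r-1\leqslant n_r$ and $\tfrac14\leqslant\tfrac{n_r}{2(n_r-1)}$); this is the first quantitative bound, and the second is $\E[\|\pi_t\|_\infty]\geqslant 1-\E[\varphi(\pi_t)]$. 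For the limit statements with a general initial condition, Grönwall gives $\E[\varphi(\pi_t)]\leqslant e^{-2\underline\lambda_r t/(n_r-1)}\E[\varphi(\pi_0)]+C\,n_r/\underline\lambda_r$ with $C$ depending only on $Q$ and $\|m\|_\infty$; since $\E[\varphi(\pi_0)]\leqslant 1$, and Assumption~\ref{assu:assu2} gives both $\underline\lambda_r/(n_r-1)\geqslant\underline\lambda_r/n_r\to\infty$ and $n_r/\underline\lambda_r\to0$, the right-hand side tends to $0$ for every fixed $t>0$; combined with $1-\E[\varphi(\pi_t)]\leqslant\E[\|\pi_t\|_\infty]\leqslant1$ this yields $\E[\|\pi_t\|_\infty]\to1$.

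I expect the main obstacle to be the bookkeeping in the $L_s\varphi$ computation: one must see that the $\underline\lambda_r$–part collapses, by antisymmetry, to exactly $-\frac{2\underline\lambda_r}{n-1}\varphi(\xi)$, and that the correction due to the non-constant part $m_r$ of the killing rate is genuinely $O(\|m\|_\infty)$ rather than $O(\underline\lambda_r)$. This is precisely where the boundedness of $\sup_{x,y}|\lambda_r(x)-\lambda_r(y)|$ assumed in Assumption~\ref{assu:assu2} enters: without it the term $\frac{2n}{n-1}\mathrm{Cov}_\xi(m_r(\cdot),\xi(\cdot))$ could be of the same order as the contracting term $-\frac{2\underline\lambda_r}{n-1}\varphi$, and the argument would break down --- exactly the obstruction mentioned for the general conjecture. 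Everything else is the increment computation, Popoviciu's inequality, the inequality $\|\pi\|_2^2\leqslant\|\pi\|_\infty$, and linear Grönwall.
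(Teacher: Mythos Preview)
Your proof is correct and follows essentially the same route as the paper: both compute the action of the generator $L$ on the quadratic functional (the paper uses $g_2(\xi)=\sum_x\xi(x)^2=1-\varphi(\xi)$), exploit the antisymmetry $\sum_{x\neq y}\xi(x)\xi(y)(\xi(x)-\xi(y))=0$ to isolate the $-\frac{2\underline\lambda_r}{n-1}\varphi$ contraction, bound the contribution of the nonconstant part $m_r$ of the killing rate, and then apply Gr\"onwall and $\|\pi\|_\infty\geqslant g_2(\pi)$. Your organisation via the exact identity $L_s\varphi=-\tfrac{2\underline\lambda_r}{n-1}\varphi+\tfrac{2n}{n-1}\mathrm{Cov}_\xi(m_r,\xi)-\tfrac{2}{n-1}\sum_x\xi(x)m_r(x)(1-\xi(x))$ together with Popoviciu's inequality is a slightly sharper variant of the paper's direct estimate $\big|\sum_{x\neq y}(\lambda_r(y)-\lambda_r(x))\xi(x)\xi(y)(\xi(y)-\xi(x))\big|\leqslant\|m\|_\infty$, but the final constants coincide with the lemma's statement.
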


\begin{proof}
Write, for a given probability measure $\xi$ on $D$,
\begin{equation}\label{eq:g2}
    g_2(\xi) = \sum_{x\in D}\xi(x)^2.
\end{equation}
We have:
\begin{align*}
Lg_2(\xi) &= \sum_{x\neq y} n_r\xi(x) \left( q(x,y) + \lambda_r(x) \frac{n_r}{n_r-1}\xi(y)\right) \\ & \qquad \qquad\left( \left( \xi(x)-\frac{1}{n_r}\right)^2 - \xi(x)^2 + \left( \xi(y)+\frac{1}{n_r}\right)^2 - \xi(y)^2 \right) \\ &= 2\sum_{x\neq y} n_r\xi(x) \left( q(x,y) + \lambda_r(x) \frac{n_r}{n_r-1}\xi(y)\right)\left( \frac{1}{n_r^2} + \frac{\xi(y)-\xi(x)}{n_r} \right) \\ &\geqslant \frac{2}{n_r}\sum_{x\neq y} \lambda_r(x) \xi(x)\xi(y) - 2\sum_{x\neq y}\xi(x)^2 q(x,y) + \frac{2n_r}{n_r-1}\sum_{x\neq y}\lambda_r(x)\xi(x)\xi(y)(\xi(y)-\xi(x)) \\ &\geqslant \frac{2\underline \lambda_r}{n_r}(1-g_2(\xi)) - 2Qg_2(\xi) - \frac{n_r}{n_r-1}\sum_{x\neq y} (\lambda_r(y)-\lambda_r(x))\xi(x)\xi(y)(\xi(y)-\xi(x)) \\ &\geqslant \frac{2\underline \lambda_r}{n_r}(1-g_2(\xi)) - \po 2Q + \frac{n_r}{n_r-1}\|m\|_{\infty}\pf.
\end{align*}
Using the Kolmogorov equation, this yields that for all $t\geqslant 0$:
\[
\partial_t \E(g_2(\pi_t)) \geqslant \frac{2\underline \lambda_r}{n_r}(1-\E(g_2(\pi_t)) ) - \po 2Q + \frac{n_r}{n_r-1}\|m\|_{\infty}\pf 
\]
and hence
\begin{equation}\label{eq:estim_g2}
\E(g_2(\pi_t)) \geqslant 1 - e^{-\frac{2\underline\lambda_r}{n_r}t} (1-\E(g_2(\pi_0))) - \po 2Q + \frac{n_r}{n_r-1}\|m\|_{\infty}\pf  \frac{n_r}{2\underline\lambda_r}.
\end{equation}
Now, since $\sum_{x\in D}\xi(x) = 1$ for all $\xi\in\mathcal M^1(D)$, we have:
\[
g_2(\pi_t) \leqslant \|\pi_t\|_{\infty},
\]
which implies the result on $\|\pi_t\|_{\infty}$.
We also have:
\[
\sum_{x\neq y\in D}\E(\pi_t(x)\pi_t(y)) = 1 - \E(g_2(\pi_t)), 
\]
which concludes the proof.
\end{proof}

The preceding Lemma showed that $\pi_t$ converges towards a Dirac measure as $r\rightarrow\infty$. The goal of the next Lemma is to determine the law of this limiting Dirac mass.

\begin{lem}\label{lem:conv-l1-expe}
 Under Assumption~\ref{assu:assu2}, we have for all $t>0$:
\[
\lim_{r\rightarrow 0} \sum_{x\in D} \left| \E(\pi_t(x)) - \mathbb P\po Y_t =x \pf \right| = 0,
\]
where $(Y_t)_{t \ge 0}$ is the Markov process with jump rates $(q(x,y))_{x,y\in D}$ and initial condition $\eta$. Moreover, if $\pi(\X^r_0)=\delta_z$ for some $z\in D$ and all $r\in\N$, then for all $T>0$, there exists $C>0$ such that for all $t \in [0,T]$:
\[
\sum_{x\in D} \left| \E(\pi_t(x)) - \mathbb P\po Y_t =x \pf \right| \leqslant C\frac{n_r}{\underline \lambda_r}.
\]
\end{lem}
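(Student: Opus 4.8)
The plan is to test the evolution of $(\pi_t)_{t\ge0}$ against linear observables $\xi\mapsto\langle\xi,f\rangle:=\sum_{z\in D}\xi(z)f(z)$, $f\in\ell^\infty(D)$, and to exploit a cancellation in the selection generator that is specific to Assumption~\ref{assu:assu2}. Writing $g_f(\xi)=\langle\xi,f\rangle$, a direct computation from~\eqref{eq:generateur-diffusion} gives $L_mg_f(\xi)=\langle\xi,\mathcal Lf\rangle$, where $\mathcal Lf(x)=\sum_yq(x,y)(f(y)-f(x))$ is the generator of $Y$. For the selection part, I would insert $\lambda_r(x)=\underline\lambda_r+m_r(x)$: since $\sum_{x,y}\xi(x)\xi(y)(f(y)-f(x))=0$ by exchanging $x$ and $y$, the contribution of the large homogeneous rate $\underline\lambda_r$ vanishes, leaving only
\[
L_sg_f(\xi)=\frac{n_r}{n_r-1}\bigl(\langle\xi,f\rangle\langle\xi,m_r\rangle-\langle\xi,m_rf\rangle\bigr)=\frac{n_r}{n_r-1}\sum_{x\neq y}\xi(x)\xi(y)\,f(x)\bigl(m_r(y)-m_r(x)\bigr),
\]
which is bounded in absolute value by $\tfrac{n_r}{n_r-1}\|f\|_\infty\|m\|_\infty\sum_{x\neq y}\xi(x)\xi(y)$ since $0\le m_r\le\|m\|_\infty$. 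This is the crucial point: after comparing with the mutation dynamics, the error term is \emph{quadratic} in $\pi_t$, hence precisely of the type controlled by Lemma~\ref{lem:high-corr}.

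Next I would introduce $\nu^r_t=(\E(\pi_t(x)))_{x\in D}\in\ell^1(D)$ and $\mu_t=\mathcal Law(Y_t)\in\ell^1(D)$. For fixed $r$, Assumption~\ref{assu:assu2} guarantees~\eqref{eq:assQlambda}, so all generators are bounded operators and Dynkin's formula applies to $g_f$; taking expectations yields $\frac{d}{dt}\langle\nu^r_t,f\rangle=\langle\nu^r_t,\mathcal Lf\rangle+\E(L_sg_f(\pi_t))$ for every $f\in\ell^\infty(D)$. Specialising to $f=\mathbbm 1_{\{x\}}$ and summing the bound of the first step over $x\in D$, the quantities $\rho^r_t(x):=\E\bigl(L_sg_{\mathbbm 1_{\{x\}}}(\pi_t)\bigr)$ define an element $\rho^r_t\in\ell^1(D)$ with $\|\rho^r_t\|_1\le\tfrac{n_r}{n_r-1}\|m\|_\infty\,\E\bigl(\sum_{x\neq y}\pi_t(x)\pi_t(y)\bigr)$, and $\langle\rho^r_t,f\rangle=\E(L_sg_f(\pi_t))$ for all $f$ by Fubini. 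Hence $\nu^r$ solves the forward equation $\partial_t\nu^r_t=\mathcal L^*\nu^r_t+\rho^r_t$ in $\ell^1(D)$, while $\mu$ solves $\partial_t\mu_t=\mathcal L^*\mu_t$ with $\mu_0=\eta$ in the general case, $\mu_0=\delta_z$ in the quantitative one.

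I would then conclude by a Duhamel argument. Writing $(\mathrm e^{s\mathcal L^*})_{s\ge0}$ for the Markov semigroup of $Y$ acting on $\ell^1(D)$, which is a contraction for $\|\cdot\|_1$, variation of constants gives $\nu^r_t-\mu_t=\mathrm e^{t\mathcal L^*}\bigl(\pi(\X^r_0)-\eta\bigr)+\int_0^t\mathrm e^{(t-s)\mathcal L^*}\rho^r_s\,\dd s$, hence, using $n_r/(n_r-1)\le2$,
\[
\sum_{x\in D}\bigl|\E(\pi_t(x))-\mathbb P(Y_t=x)\bigr|=\|\nu^r_t-\mu_t\|_1\le\|\pi(\X^r_0)-\eta\|_{TV}+2\|m\|_\infty\int_0^t\E\Bigl(\sum_{x\neq y}\pi_s(x)\pi_s(y)\Bigr)\,\dd s.
\]
For the first assertion, $\|\pi(\X^r_0)-\eta\|_{TV}\to0$ by Assumption~\ref{assu:assu2} (weak convergence on the discrete set $D$ is convergence in total variation), and by Lemma~\ref{lem:high-corr} the integrand tends to $0$ for every $s>0$ and is bounded by $1$, so the integral tends to $0$ by dominated convergence. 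For the quantitative statement, $\eta=\delta_z$ makes the boundary term vanish, and the second bound of Lemma~\ref{lem:high-corr} controls the integrand by $\bigl(Q+\tfrac{n_r}{2(n_r-1)}\|m\|_\infty\bigr)\tfrac{n_r}{\underline\lambda_r}$ uniformly in $s$, yielding $\|\nu^r_t-\mu_t\|_1\le C\,n_r/\underline\lambda_r$ for $t\in[0,T]$ with $C$ depending only on $T$, $Q$ and $\|m\|_\infty$.

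The main obstacle is conceptual rather than computational and sits in the first step: recognising that the homogeneous part of the killing rate contributes nothing to the evolution of linear observables — which is exactly why the bounded-oscillation hypothesis of Assumption~\ref{assu:assu2} is the natural one — and that the leftover defect is quadratic, hence absorbed by the two-particle correlation estimate of Lemma~\ref{lem:high-corr}. The remaining points require only routine care: the functional-analytic bookkeeping when $D$ is infinite (well-posedness of the $\ell^1(D)$-valued forward equation, and the interchanges of summation, expectation and time-differentiation), all of which is harmless since every operator involved is bounded once $r$ is fixed.
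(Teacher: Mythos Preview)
Your proof is correct and follows the same approach as the paper: derive the forward equation for $\E(\pi_t(\cdot))$ by testing against linear observables, observe that the homogeneous part $\underline\lambda_r$ of the killing rate cancels, and control the remaining selection term $\rho^r_t$ (the paper's $g_t$) via Duhamel and Lemma~\ref{lem:high-corr}. The only cosmetic differences are that the paper writes the Kolmogorov equation for $\E(\pi_t(x))$ directly rather than via Dynkin's formula, and handles the $r\to\infty$ limit in the general case by invoking the explicit bound~\eqref{eq:estim_g2} rather than dominated convergence; both routes are equivalent.
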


\begin{proof}
The Kolmogorov equation yields that for all $x\in D$:
\begin{align*}
\partial_t\E(\pi_t(x)) &= \sum_{y\neq x} \E(\pi_t(y))q(y,x) - \E(\pi_t(x))\sum_{y\neq x}q(x,y) \\&\quad + \frac{n_r}{n_r-1}\sum_{y\neq x}\E(\pi_t(x)\pi_t(y))\left( \lambda_r(y) - \lambda_r(x) \right).
\end{align*}
Let $(\nu_t)_{t \geqslant 0}$ be the unique solution to the following system:
\[
\partial_t \nu_t(x) = \sum_{y\neq x} \nu_t(y) q(y,x) - \nu_t(x)\sum_{y\neq x}q(x,y) =L_m^*(\nu_t)(x),
\]
with initial condition $\nu_0 = \eta$. We can identify $\nu_t(x) = \mathbb P_{\eta}\po Y_t =x \pf$. Besides,
$L_m^*$ is a linear operator, and we have:
\begin{align*}
\partial_t  \left(\E(\pi_t) - \nu_t \right) &= L_m^*\left(\E(\pi_t) - \nu_t\right) + g_t,
\end{align*}
where
\[
g_t(x) = \sum_{y\neq x} \frac{n_r}{n_r-1}\E(\pi_t(x)\pi_t(y))\left( \lambda_r(y) - \lambda_r(x) \right),
\]
for all $x\in D$. We get:
\[
\E(\pi_t) - \nu_t =e^{tL_m^*}\po \pi\po \X_0^r \pf - \eta\pf +  \int_0^t e^{s L_m^*}g_{t-s} \dd s.
\]
We have that for all $t\geqslant0$:
\[
\|g_t\|_{1} \leqslant \frac{n_r}{n_r-1}\|m\|_{\infty}\sum_{x\neq y\in D} \E(\pi_t(x)\pi_t(y)) \leqslant 2\|m\|_{\infty} \po 1 - \E\po g_2(\pi_t ) \pf \pf,
\]
where $g_2$ is defined in~\eqref{eq:g2}. Since $L_m$ is a Markov generator, Lemma~\ref{lem:high-corr} (and more precisely Equation~\eqref{eq:estim_g2}) yields that
\[
\lim_{r\rightarrow \infty} \|\E(\pi_t) - \nu_t\|_{1} = 0,
\]
as well as, if $\eta = \pi\po X_0\pf$:
\[
\| \E(\pi_t) - \nu_t \|_{1} \leqslant C\frac{n_r}{\underline \lambda_r},
\]
for all $0<t<T$ and some $C>0$, which concludes the proof.
\end{proof}

We are now in position to conclude the proof of Theorem~\ref{thm:soft-kill-2}.
\begin{proof}[Proof of Theorem~\ref{thm:soft-kill-2}]
Let $F:\M^1(D)\to \R$ bounded and Lipschitz continuous with respect to $\|\cdot\|_{TV}$, and $0<\varepsilon<1/2$. Write
\[
\left| \E(F(\pi_t)) - \sum_{z\in D} F(\delta_z)\mathbb P\left( \|\pi_t-\delta_z\|_{TV} \leqslant \varepsilon \right) \right| \leqslant \sum_{z\in D} \varepsilon\|F\|_{\mathrm{Lip}}\mathbb P\left( \|\pi_t-\delta_z\|_{TV} \leqslant \varepsilon \right) +  R_{r,t},
\]
where 
\[ R_{r,t} = \|F\|_{\infty} \mathbb P\left( d( \pi_t, \Delta)>\varepsilon \right).
\]
Using the fact that $\|\pi_t-\delta_z\|_{TV} = 2\left(1-\pi_t(z)\right)$, we get
\[
R_{r,t} \leqslant \|F\|_{\infty} \mathbb P\left( \| \pi_t\|_{\infty} < 1 - \varepsilon/2 \right) \leqslant \frac{2\|F\|_{\infty}}{\varepsilon} \E(1-\| \pi_t\|_{\infty}),
\]
and finally
\begin{equation}\label{eq:conv-assu2-borne-1}
    \left| \E(F(\pi_t)) - \sum_{z\in D} F(\delta_z)\mathbb P\left( \|\pi_t-\delta_z\|_{TV} \leqslant \varepsilon \right) \right| \leqslant  \varepsilon \|F\|_{\mathrm{Lip}}+ \frac{2}{\varepsilon} \|F\|_{\infty} \E\left(1-\| \pi_t\|_{\infty}\right).
\end{equation}
Using Markov inequality, we have:
\[
\mathbb P\left(\|\pi_t-\delta_z\|_{TV} \leqslant \varepsilon \right) = \mathbb P\left( \pi_t(z) \geqslant 1-\varepsilon/2 \right) \leqslant (1-\varepsilon/2)^{-1}\E(\pi_t(z)) \leqslant (1+\varepsilon)\E(\pi_t(z)),
\]
so that
\[
    \mathbb P\left(\|\pi_t-\delta_z\|_{TV}\leqslant \varepsilon \right) - \E(\pi_t(z)) \leqslant \varepsilon \E(\pi_t(z)).
\]
Besides, we have that
\[
\E(\pi_t(z)) = \E\po \pi_t(z) \mathbbm 1_{\pi_t(z)\geqslant 1-\varepsilon/2} + \mathbbm 1_{\pi_t(z)\leqslant 1-\varepsilon/2} \pf \leqslant \mathbb P\left( \pi_t(z) \geqslant 1-\varepsilon/2 \right) + \E\po \pi_t(z)  \mathbbm 1_{\pi_t(z)\leqslant 1-\varepsilon/2} \pf,
\]
so that
\[
    \E(\pi_t(z)) - \mathbb P\left(\|\pi_t-\delta_z\|_{TV}\leqslant \varepsilon \right) \leqslant \E\po \pi_t(z)  \mathbbm 1_{\pi_t(z)\leqslant 1-\varepsilon/2} \pf.
\]
Moreover, 
\begin{align*} 
\sum_{z\in D} \E\po \pi_t(z)  \mathbbm 1_{\pi_t(z)\leqslant 1-\varepsilon/2} \pf &= \sum_{z\in D} \E\po \pi_t(z)  \mathbbm 1_{\pi_t(z)\leqslant 1-\varepsilon/2}\po \mathbbm 1_{\|\pi\|_{\infty}\leqslant 1-\varepsilon/2} + \mathbbm 1_{\|\pi\|_{\infty}> 1-\varepsilon/2}\pf\pf \\ & \leqslant \mathbb P\left( \| \pi(\X_t)\|_{\infty} < 1 - \varepsilon/2 \right) + \frac{\varepsilon}{2},
\end{align*}
and therefore
\begin{align*}
    &\left|  \sum_{z\in D} F(\delta_z)\mathbb P\left( \|\pi_t-\delta_z\|_{TV} \leqslant \varepsilon \right) - \sum_{z\in D} F(\delta_z) \E\po \pi_t(z) \pf \right| \\ & \qquad \leqslant \|F\|_{\infty} \sum_{z\in D}\left| \mathbb P\left( \|\pi_t-\delta_z\|_{TV} \leqslant \varepsilon \right) - \E\po \pi_t(z) \pf \right| 
\end{align*}
yielding that
\begin{equation}\label{eq:conv-assu2-borne-2}
    \left|  \sum_{z\in D} F(\delta_z)\mathbb P\left( \|\pi_t-\delta_z\|_{TV} \leqslant \varepsilon \right) - \sum_{z\in D} F(\delta_z) \E\po \pi_t(z) \pf \right| \leqslant \|F\|_{\infty}\po \frac{3}{2}\varepsilon + \frac{2}{\varepsilon} \E(1-\| \pi_t\|_{\infty}) \pf.
\end{equation}
Lastly, we have that:
\begin{equation}\label{eq:conv-assu2-borne-3}
\left|  \sum_{z\in D} F(\delta_z) \E\po \pi_t(z) \pf -  \sum_{z\in D} F(\delta_z) \mathbb P\po Y_t = z \pf \right| \leqslant \|F\|_{\infty}\sum_{z\in D} \left| \E(\pi_t(z)) - \mathbb P\po Y_t = z \pf \right|.  
\end{equation}
Equations~\eqref{eq:conv-assu2-borne-1},~\eqref{eq:conv-assu2-borne-2} and~\eqref{eq:conv-assu2-borne-3} together yield
\begin{multline}\label{eq:conv-assu2-borne-tot}
    \left| \E(F(\pi_t)) - \sum_{z\in D} F(\delta_z) \mathbb P\po Y_t = z \pf \right| \\ \leqslant \varepsilon \|F\|_{\mathrm{Lip}} +  \|F\|_{\infty}\po \frac{3}{2}\varepsilon + \frac{4}{\varepsilon} \E(1-\| \pi_t\|_{\infty})+\sum_{z\in D} \left| \E(\pi_t(z)) - \mathbb P\po Y_t = z \pf \right|\pf,
\end{multline}
and Lemma~\ref{lem:high-corr} and Lemma~\ref{lem:conv-l1-expe} yield, for $t>0$,
\[
\limsup_{r\rightarrow\infty} \left| \E(F(\pi_t)) - \sum_{z\in D} F(\delta_z) \mathbb P\po Y_t = z \pf \right| \leqslant \po\|F\|_{\mathrm{Lip}} + \frac{3}{2}\|F\|_{\infty}\pf\varepsilon.
\]
Letting $\varepsilon\rightarrow 0$ yields the first part of the theorem. In the case where the initial condition is a Dirac mass, plugging $\varepsilon = \sqrt{n_r/\underline \lambda_r}$ in equation~\eqref{eq:conv-assu2-borne-tot} and Lemma~\ref{lem:conv-l1-expe} yields for all $T>0$ and $0<t<T$:
\begin{align*}
&\left| \E(F(\pi_t)) - \sum_{z\in D} F(\delta_z) \mathbb P\po Y_t = z \pf \right| \\
&\quad \leqslant \|F\|_{\mathrm{Lip}} \sqrt{\frac{n_r}{\underline\lambda_r}} +  \|F\|_{\infty}\po \frac{3}{2}\sqrt{\frac{n_r}{2\underline\lambda_r}} + 4\sqrt{\frac{\underline\lambda_r}{n_r}}\po Q + \frac{n_r}{2(n_r-1)}\|m\|_{\infty}\pf \frac{n_r}{\underline\lambda_r} + C\frac{n_r}{\underline\lambda_r} \pf \\ 
&\quad \leqslant C' \po \|F\|_{\mathrm{Lip}} + \|F\|_{\infty}\pf \sqrt{\frac{n_r}{\underline\lambda_r}},
\end{align*}
for some $C'>0$, and this concludes the proof. 
\end{proof}

\section*{Acknowledgements} The work of T.L. is partially funded by the European Research Council (ERC) under the European Union’s Horizon 2020 research and innovation
programme (project EMC2, grant agreement No 810367), and also from the Agence Nationale de la Recherche through
the grant ANR-19-CE40-0010-01 (QuAMProcs). The work of J.R. is partially supported by the Agence Nationale de la Recherche through the grants  ANR-19-CE40-0010-01 (QuAMProcs) and ANR-23-CE40-0003 (Conviviality). 

\bibliography{bibliographie}
\bibliographystyle{plain}

\end{document}